\def\BState{\State\hskip-\ALG@thistlm}
 \newtheorem{theorem}{Theorem}[section]
 \newtheorem{lemma}[theorem]{Lemma}
 \theoremstyle{definition}
\newtheorem{definition}{Definition}[section]
\newcommand{\vertiii}[1]{{\left\vert\kern-0.25ex\left\vert\kern-0.25ex\left\vert #1 
    \right\vert\kern-0.25ex\right\vert\kern-0.25ex\right\vert}}
\begin{document}
\title{Interior-exterior penalty approach for solving elasto-hydrodynamic lubrication problem: Part I}
\date{}
\author{Peeyush Singh$^{*}$}
\maketitle
\thispagestyle{empty}
\begin{abstract}
A new interior-exterior penalty method for solving quasi-variational inequality
and pseudo-monotone operator arising in two-dimensional point contact problem
is analyzed and developed in discontinuous Galerkin finite volume framework. 
In this article, we show that optimal error estimate in $H^{1}$ and $L^{2}$ norm is achieved under a light
load parameter condition. In addition, article provide a
complete algorithm to tackle all numerical complexities appear in the solution procedure.
We obtain results for moderate loaded conditions which is discussed at the end of the section.
This method is well suited for solving elasto-hydrodynamic lubrication 
line as well as point contact problems and can probably be treated as commercial software.
Furthermore, results give a hope for the further development of the scheme for highly loaded condition
appeared in a more realistic operating situation which will be discussed in part II.
\end{abstract}
\vspace{0.2in}
\noindent{\sc Keywords:} \ Elasto-hydrodynamic lubrication, discontinuous finite volume method, 
interior-exterior penalty method, pseudo-monotone operators, quasi-variational inequality.\\
\\
\noindent{$^{*}$ \small Tata Institute of Fundamental Research CAM Banglore-208016, India}\\
Mobile no: +919793585195 \\
{e-mail: peeyush@tifrbng.res.in, peeyushs8@gmail.com}
\newpage
\maketitle
\section{Introduction}
The motivation behind the present study is to better understand theoretical and numerical aspects of partial differential equation (PDE) of elasto-hydrodynamic lubrication
(EHL) problems using discontinuous Galerkin finite volume method (DG-FVM) setting.
In particular, these numerical methods can derive from a firm theoretical foundation and understanding similar to finite element \cite{oden1985} and finite difference
see for example \cite{chouye},\cite{chou} \cite{chowkwak}, \cite{vassilevski}.
Finite volume method (FVM) formulation obtained by integrating the PDE over a control volume.
Due to its natural conservation property, flexibility and parallelizability FVM is commonly accepted in 
many realistic practical problems such as fluid mechanics computations and hyperbolic conservation laws which have minimum regularity of solution in nature.
It is also quite natural to assume the advantage of nonconforming or DG finite element method (see for example 
\cite{lion},\cite{arnold}, \cite{aubin}, \cite{suli} \cite{babuska},\cite{wheeler}, \cite{douglas},\cite{nitsche}, \cite{riviere} ) can be applied into DG-FVM
(see for example \cite{ye},\cite{chouye}). 
However, there are hardly any numerical results on DG-FVM for solving nonlinear variational inequalities or for solving EHL model problem. Therefore
in this article, an attempt has been made to establish theoretical framework such as convergence and error estimate for DG-FVM for solving EHL model problem 
with the help of interior-exterior penalty procedure. So far it was very ambiguous to prove the connection of exterior penalty in DG-FVM setting 
to capture free boundary. One key point analysis is needed to make a natural connection which later helps to prove convergence and error estimate for not only 
EHL problem but also general variational inequality. However, in this discussion, we will center around only for EHL study more practical result discussion will be given in the second part of this paper. 
\subsection{Model Problem}
Consider strongly nonlinear EHL model problem of a ball rolling in the positive $x$-direction gives rise to a variational inequality defined below as
\begin{figure}
\centering
\includegraphics[width=2.5in, height=2.5in, angle=0]{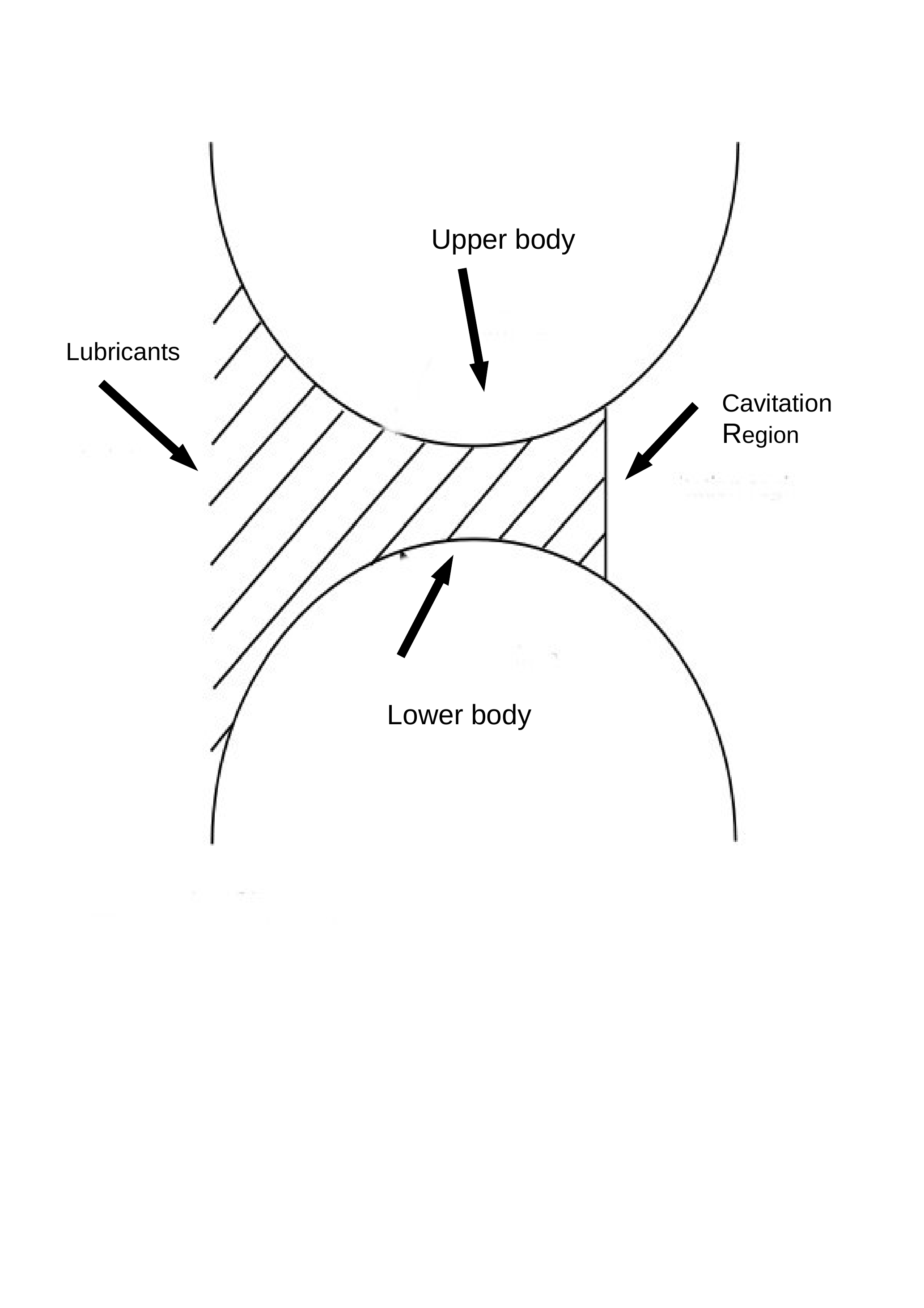}
\caption{Undeformed surface body}
\label{fig:undefm}
\end{figure}
\begin{figure}[h]
\centering
\includegraphics[width=2.5in, height=2.5in, angle=0]{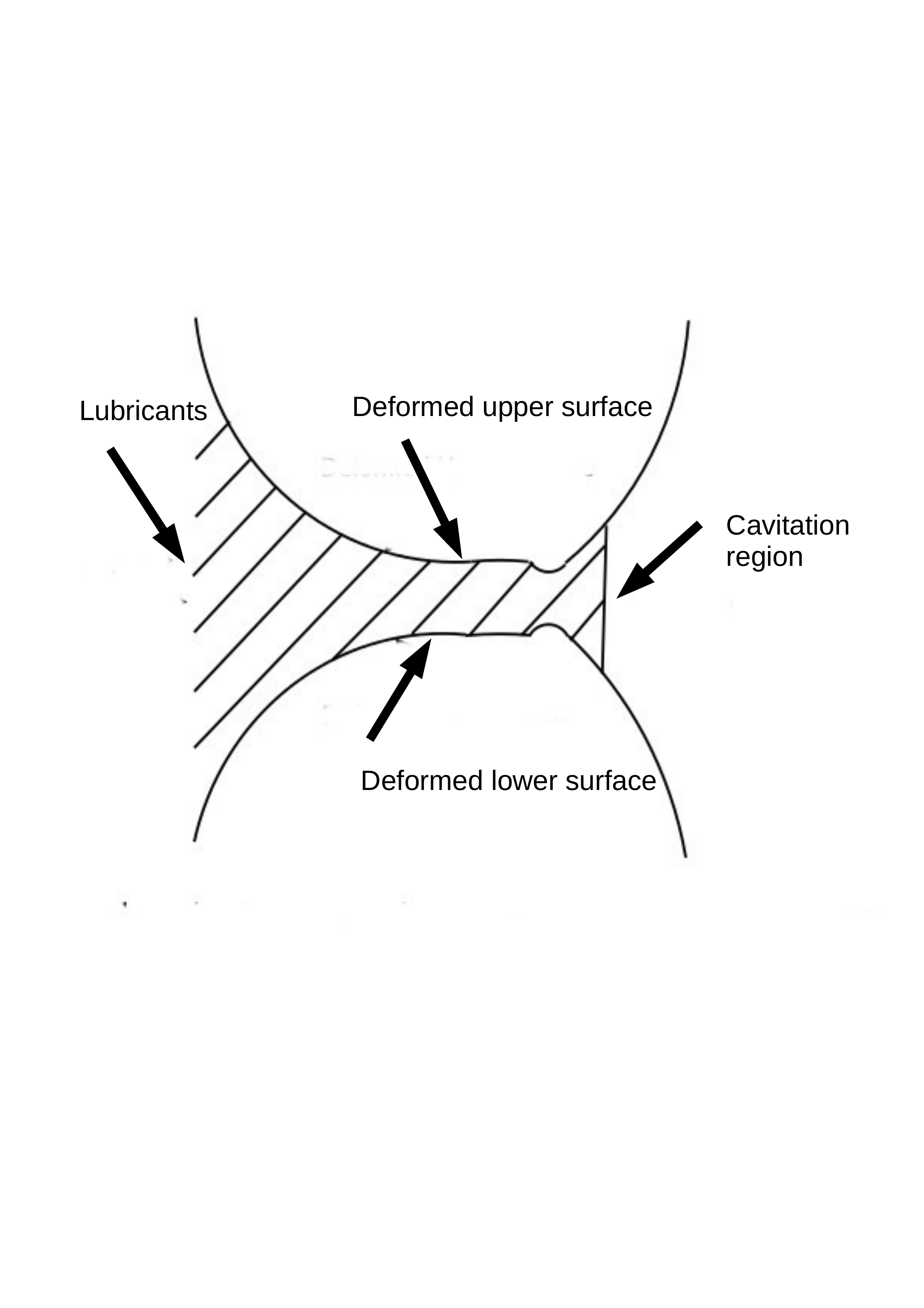}
\caption{Deformed surface body}
\label{fig:defm}
\end{figure}
\begin{align}\label{eq:1}
 \frac{\partial }{\partial x} \Big(\epsilon^{*} \frac {\partial u}{\partial x}\Big)+
 \frac{\partial }{\partial y} \Big(\epsilon^{*} \frac {\partial u}{\partial y}\Big)
 \le \frac {\partial (\rho h)}{\partial x}
\end{align}
\begin{align}\label{eq:2}
 u\ge 0
\end{align}
\begin{align}\label{eq:3}
 u.\Big[\frac{\partial }{\partial x} \Big(\epsilon^{*} \frac {\partial u}{\partial x}\Big)+
 \frac{\partial }{\partial y} \Big(\epsilon^{*} \frac {\partial u}{\partial y}\Big)
 -\frac {\partial (\rho h)}{\partial x}\Big] = 0,
\end{align}
where $u$ is pressure of liquid and $\rho,\epsilon^{*}=\frac{\rho h_{d}^{3}}{\eta}$ are defined in appendix \ref{appendix:pvalue}.
We consider above nonlinear variational inequality in a bounded, but large domain $\Omega$.
Since $u$ is small on $\partial \Omega$, it seems natural to impose the boundary condition
 \begin{align}\label{eq:4}
  u= 0 \quad \text{on} \quad \partial \Omega
 \end{align}
The film thickness equation is in dimensionless form is written as follows
\small
\begin{equation}\label{eq:5}
h_{d}(x,y) = h_{00}+\frac{x^{2}}{2}+\frac{y^{2}}{2} 
+\frac{2}{\pi^{2}}\int_{\Omega}\frac{u(x^{'},y^{'})dx^{'}dy^{'}}{\sqrt{(x-x^{'})^2+(y-y^{'})^2}}
\end{equation}
\normalsize
where $h_{00}$ is an integration constant.\\
The dimensionless force balance equation is defined as follows
\begin{align}\label{eq:6}
 \int_{-\infty}^{\infty} \int_{-\infty}^{\infty}u(x',y') dx'dy' = \frac{3\pi}{2}
\end{align}
Consider the ball is elastic whenever load is large enough. Then system \ref{eq:1}--\ref{eq:6} forms an Elasto-hydrodynamic Lubrication.
Schematic diagrams of EHL model is given in \ref{fig:undefm} and \ref{fig:defm} in the form of undeformed and deformed contacting body structure respectively.\\
The remainder of the article is organized as follows. In section \ref{section:vi} variational inequality and its notation is established;
Furthermore, existence results are proved for our model problem; In section.~\ref{section:dgfvm} DG-FVM notation 
and the proposed method is demonstrated; In section.~\ref{section:error} Error estimates are proved in $L^2$ and $H^{1}$ norm;
In section.~\ref{section:ntest} numerical experiment and graphical results are provided;
At last section.~\ref{section:con} conclusion and future direction is mentioned.
\section{Variational Inequality}\label{section:vi}
We consider space $\mathscr{V} = H^{1}_{0}(\Omega)$ and its dual space as $\mathscr{V}^{*}= (H^{1}_{0}(\Omega))^{*} = H^{-1}(\Omega)$.
Also define notion $\langle.,.\rangle$ as duality pairing on $\mathscr{V}^{*} \times \mathscr{V} $.
Further assume that $\mathscr{C}$ is closed convex subset of $\mathscr{V}$ defined by 
\begin{align}\label{eq:11}
\mathscr{C} = \Big\{  v \in \mathscr{V}: v \ge 0  \text{ a.e. } \in \Omega \Big\}
\end{align}
Additionally, we define the operator $\mathscr{T}$ as 
\begin{align}\label{eq:12}
 \mathscr{T}:u\rightarrow -\Big[\frac{\partial }{\partial x} \Big(\epsilon^{*} \frac {\partial u}{\partial x}\Big)+
 \frac{\partial }{\partial y} \Big(\epsilon^{*} \frac {\partial u}{\partial y}\Big)\Big]
+\frac {\partial (\rho h_{d})}{\partial x}
\end{align}
Then, for a given $f \in \mathscr{V}^{*}$, the problem of finding an element $u \in \mathscr{C}$ such that
\begin{align}\label{eq:14}
 \langle \mathscr{T}(u)-f,v-u \rangle \ge 0, \quad \forall v \in \mathscr{C}.
\end{align}
Throughout in the article we shall assume that there exists $\delta >0$ and $K_{*} >0$ such that
\begin{align}
\frac{\partial \epsilon(\varsigma)}{\partial u}.\nabla u \ge K_{*}|u|^{2}\quad \forall \varsigma \in \Omega \quad
\forall \varsigma \in \mathcal{Z}_{\delta}
\end{align}
\begin{definition}
 Operator $\mathscr{T}: \mathscr{C} \subset \mathscr{V}\rightarrow \mathscr{V}^{*}$ is said to be pseudo-monotone if 
 $\mathscr{T}$ is a bounded operator and whenever $u_{k}\rightharpoonup u$ in $\mathscr{V}$ as $k \rightarrow \infty$
 and 
 \begin{align}\label{eq:b14}
 \lim_{k \rightarrow \infty}\sup\langle \mathscr{T}(u_{k}),u_{k}-u\rangle \le 0.
\end{align}
it follows that for all $v \in \mathscr{C}$
 \begin{align}\label{eq:15}
 \lim_{k \rightarrow \infty}\inf\langle \mathscr{T}(u_{k}),u-v\rangle 
 \ge \langle\mathscr{T}(u),u-v\rangle.
\end{align}
\end{definition}
\begin{definition}
Operator $ \mathscr{T}: \mathscr{V} \rightarrow \mathscr{V}^{*}$ is said to be hemi-continuous 
if and only if the function $\phi: t \longmapsto \langle \mathscr{T}(tx+(1-t)y),x-y\rangle $ is
continuous on $[0,1] \quad \forall x,y \in \mathscr{V}$.
\end{definition}
On this context the following existence theorem has been proved by Oden and Wu \cite{oden1985} by assuming constant density 
and constant viscosity of the lubricant.
However, idea is easily extend-able for more realistic operating condition in which density 
and viscosity of the lubricant are depend on its applied pressure see Appendix.~\ref{appendix:pvalue}.
A straight forward modification of the analysis of \cite{oden1985} yields the theorem below and so we will omit the proof.
\begin{theorem}\cite{oden1985}
Let $\mathscr{C}(\neq \emptyset)$ be a closed, convex subset of a reflexive Banach space $\mathscr{V}$ and let $\mathscr{T}: \mathscr{C} \subset \mathscr{V} \rightarrow \mathscr{V}^{*}$ be a pseudo-monotone,
bounded, and coercive operator from $\mathscr{C}$ into the dual $\mathscr{V}^{*}$ of $\mathscr{V}$, in the sense that there exists $y \in \mathscr{C}$ such that
 \begin{align}\label{eq:18}
 \text{lim}_{||x||\rightarrow \infty}\frac{\langle  \mathscr{T}(x),x-y\rangle}{||x||} = \infty.
 \end{align}
 Let $f$ be given in $\mathscr{V}^{*}$ then there exists at least one $u \in \mathscr{C}$ such that 
 \begin{align}\label{eq:19}
  \langle \mathscr{T}(x)-f,y-x\rangle \ge 0 \quad \forall y \in \mathscr{C}.
 \end{align}
\end{theorem}
In the next section, we will give a complete formulation as well as will give theoretical justification for existence of 
our model problem in discrete computed setting.
\section{Discrete Formulation of DG-FVM}\label{section:dgfvm}
\begin{figure}
\centering
\includegraphics[width=2.0in, height=2.0in, angle=0]{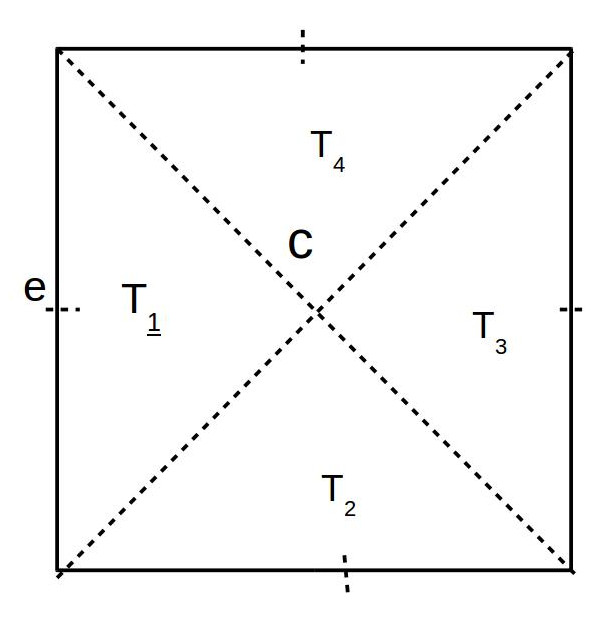}
\caption{Rectangular partition}
\label{fig:elm}
\end{figure}
We define finite dimensional space associated with $\mathscr{R}_h$  for trial functions as
\begin{equation}\label{eq:26}
\mathscr{V}_h = \{ v \in L^{2}(\Omega): v|_K \in \mathcal{S}_{1}(K), v|_{\partial \Omega}=0 \quad \forall K \in \mathscr{R}_h \}.
\end{equation}
Define the finite dimensional space $\mathscr{W}_h$ for test functions associated with the dual partition $\mathscr{M}_h$ as
\begin{equation}\label{eq:27}
\mathscr{W}_h = \{ q \in L^{2}(\Omega): q|_T \in \mathcal{S}_{0}(T),q|_{\partial \Omega}=0 \quad \forall T \in \mathscr{M}_h \},
\end{equation}
where $\mathcal{S}_{l}(T)$ consist of all the polynomials with degree less than or equal to $l$ defined on $T$.\\
Let $\mathscr{V}(h) = \mathscr{V}_h + H^2(\Omega) \cap H^{1}_{0}(\Omega)$. Define a mapping 
\begin{align}\label{eq:28}
\gamma : \mathscr{V}(h) \longmapsto \mathscr{W}_{h} \quad
\gamma v|_T = \frac{1}{h_e} \int_e v|_T ds , \quad T \in \mathscr{M}_h.
\end{align}
Let $T_{j} \in \mathscr{M}_h (j=1,2,3,4)$ be four triangles in $K \in \mathscr{R}_h$.
Let $e$ be an interior edge shared by two elements $K_1$ and $K_2$ in $\mathscr{R}_h$ and let $\bold{n_1} $ and $\bold{n_2}$ be unit
normal vectors on $e$ pointing exterior to $K_1$ and $K_2$ respectively.
We define average \{.\} and jump [.] on $e$ for scalar $q$ and vector $w$, respectively, as (\cite{arnold})
\[\{q\} = \frac{1}{2}(q|_{\partial T_1}+ q|_{\partial T_2}), \quad 
[q] = (q|_{\partial T_1}\bold{n_1}+ q|_{\partial T_2}\bold{n_2})\]
\[\{w\} = \frac{1}{2}(w|_{\partial T_1}+ w|_{\partial T_2}), \quad 
[w] = (w|_{\partial T_1}\bold{n_1}+ w|_{\partial T_2}\bold{n_2})\]
If $e$ is a edge on the boundary of $\Omega$, we define
${q} = q,\quad [w] = w.\bold{n}$.
Let $\Gamma$ denote the union of the boundaries of the triangle $K$ of $\mathscr{R}_h$ and $\Gamma_{0}:= \Gamma\diagdown\partial \Omega$.
\subsection{Weak Formulation}
Reconsider the problem of the type 
\begin{align}
\frac{\partial }{\partial x} \Big(\epsilon^{*} \frac {\partial u}{\partial x}\Big)+
 \frac{\partial }{\partial y} \Big(\epsilon^{*} \frac {\partial u}{\partial y}\Big)
 -\frac {\partial (\rho h)}{\partial x}=0 \quad \text{in } \Omega\\
 u =0 \quad \text{on } \partial \Omega,
\end{align}
where all notation has their usual meaning.\\
For given $u,v \in H^{2}(\Omega)$ and for fixed value of $\Phi \in H^{2}(\Omega)$, define bilinear form as
\begin{align}\label{eq:33}
 \langle \mathscr{T}(\Phi;u),v\rangle 
 = \sum\limits_{K \in \mathscr{R}} \sum\limits_{j=1}^{4} \int_{A_{j+1}CA_{j}} \epsilon(\Phi) \nabla u.\bold{n}\gamma v ds \nonumber \\
+ \sum\limits_{e \in \Gamma}\int_{e} [v]\{ \epsilon(\Phi) \nabla u.\bold{n}\}ds 
+ \alpha_{1} \sum\limits_{e \in \Gamma}[\gamma u]_e[v]_e \nonumber \\
-\sum\limits_{K \in \mathscr{R}_h} \sum\limits_{j=1}^{4} \int_{A_{j+1}CA_{j}}
(\rho(\Phi) h_{d}(u)).(\beta .\bold{n})\gamma v ds 
- \sum\limits_{e \in \Gamma} \int_{e} [v] \{ (\rho(\Phi) h_{d}(u)).(\beta.\bold{n})\} ds.
\end{align}
We define the following mesh dependent norm $\vertiii{.}$ and $\vertiii{.}_{\nu}$ as
\begin{align}\label{eq:36}
\vertiii{v}^{2}= |v|^{2}_{1,h}+\sum_{e}|\gamma v|_{e}^{2}\\
\vertiii{v}^{2}_{\nu}= |v|^{2}_{1,h}+\sum_{e}h_{e}\int_{e}\Big\{ \frac{\partial v }{\partial \nu}\Big\}^{2} ds 
+\sum_{e}|\gamma v|_{e}^{2},\text{ where }|v|^{2}_{1,h}=\sum_{K}|v|_{1,K}^{2}.
\end{align}
Now we will state few lemmas and inequalities without proof which will be later helpful in our subsequent analysis.
\begin{lemma}
For $u \in H^{s}(K_{i})$, there exist a positive constant $C_{A}$ and an interpolation value $u_{I} \in \mathscr{V}_{h}$, such that
\begin{align}
 |\vert u-u_{I} \vert|_{s,K} \le C_{A}h^{2-s}|u|_{2,K},\quad s=0,1.
\end{align}
\end{lemma}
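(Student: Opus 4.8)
The plan is to obtain this as the classical polynomial interpolation estimate, by combining an affine change of variables to a fixed reference cell with the Bramble--Hilbert (Deny--Lions) lemma, exploiting that $\mathcal{S}_1(K)$ contains every polynomial of degree $\le 1$.

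First I would fix the interpolant. Since $\Omega \subset \mathbb{R}^2$ and $u \in H^2(K)$, the Sobolev embedding $H^2(K) \hookrightarrow C^0(\bar K)$ guarantees that the nodal values of $u$ on $K$ are well defined, so I define $u_I|_K \in \mathcal{S}_1(K)$ as the interpolant determined by the nodal functionals attached to $\mathcal{S}_1(K)$ (equivalently, an averaged Cl\'ement-type interpolant, which only affects the value of $C_A$). By construction $u_I \in \mathscr{V}_h$, the map $u \mapsto u_I$ is linear and bounded from $H^2(K)$ into $H^s(K)$ for $s=0,1$, and it leaves every polynomial of degree $\le 1$ invariant.

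Next I would pass to a reference cell $\hat K$ through the affine map $F_K:\hat K \to K$, $\hat x \mapsto B_K\hat x + b_K$. Writing $\hat v = v\circ F_K$ and using the shape-regularity (quasi-uniformity) of $\mathscr{R}_h$, one has $\|B_K\| \lesssim h_K$, $\|B_K^{-1}\| \lesssim h_K^{-1}$, $|\det B_K| \sim h_K^{2}$, hence for every $m$, $|\hat v|_{m,\hat K} \lesssim h_K^{\,m-1}|v|_{m,K}$ and $|v|_{m,K} \lesssim h_K^{\,1-m}|\hat v|_{m,\hat K}$. Because interpolation commutes with the affine pullback, $\widehat{u_I} = \hat I\hat u$, where $\hat I$ is the reference interpolation operator; it is bounded on $H^2(\hat K)$ and fixes all polynomials of degree $\le 1$ on $\hat K$.

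The heart of the argument is then the Bramble--Hilbert lemma on the fixed Lipschitz domain $\hat K$: since $I-\hat I$ annihilates the space of polynomials of degree $\le 1$ and is bounded from $H^2(\hat K)$ into $H^s(\hat K)$, we get $\|\hat u - \hat I\hat u\|_{s,\hat K} \le \hat C\,|\hat u|_{2,\hat K}$ with $\hat C$ depending only on $\hat K$ and $s$. Scaling back, $\vertiii{u-u_I}$-type local norms satisfy $\|u-u_I\|_{s,K} \lesssim h_K^{\,1-s}\|\hat u-\hat I\hat u\|_{s,\hat K} \lesssim h_K^{\,1-s}|\hat u|_{2,\hat K} \lesssim h_K^{\,1-s}\,h_K^{\,1}|u|_{2,K} = C_A\, h^{2-s}|u|_{2,K}$, the dimension-dependent powers of $h_K$ cancelling exactly. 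The only genuine obstacle is the bookkeeping: one must verify that the reference interpolation operator is well defined and polynomial-invariant for the (here rectangular) cells of $\mathscr{R}_h$, and that all geometric constants stay uniform in $h$ — this is precisely where the regularity hypothesis on the partition $\mathscr{R}_h$ is used; with those checks in place the estimate follows.
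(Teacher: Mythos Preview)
Your argument is the standard and correct proof of this classical interpolation estimate via affine scaling to a reference element combined with the Bramble--Hilbert lemma. The paper itself does not supply a proof for this lemma at all; it is simply stated as a known approximation property and used later in the error analysis. So your proposal is more than what the paper offers, and nothing in it conflicts with the paper's treatment.
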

{\bf Trace inequality.} We state without proof the following trace inequality. Let $\phi \in H^{2}(K)$ and for an edge $e$ of $K$,
\begin{align}
 |\vert \phi |\vert_{e}^{2} \le C(h_{e}^{-1}|\phi|_{K}^{2}+h_{e}|\phi|_{1,K}^{2}).
\end{align}

\begin{lemma}
Let for any $u,v \in \mathscr{V}_{h}$, then we have following relation
\begin{align}\label{eq:37}
\langle h^{3}_{d}\rho e^{-au}\nabla_{h} u,\nabla_{h} v  \rangle \le
\langle  \mathscr{T}_{1}(u;u_{h}),v \rangle+C_{1}h \vertiii{u}\vertiii{v}, 
\end{align}
where $$\langle  \mathscr{T}_{1}(u;u_{h}),v \rangle
=\sum\limits_{K \in \mathscr{R}} \sum\limits_{j=1}^{4} \int_{A_{j+1}CA_{j}} h^{3}_{d}\rho e^{-au} \nabla u.\bold{n}\gamma v ds$$
\end{lemma}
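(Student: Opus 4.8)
The plan is to localize everything on a single rectangle $K\in\mathscr{R}_h$, prove an \emph{exact} discrete Green identity for a frozen coefficient, and then bound the nonlinear correction by the mesh size. First I would rewrite the left-hand side of \eqref{eq:37} as the broken form $\langle h_{d}^{3}\rho e^{-au}\nabla_{h}u,\nabla_{h}v\rangle=\sum_{K}\int_{K}\epsilon(u)\,\nabla u\cdot\nabla v\,dx$ with $\epsilon(u):=h_{d}^{3}\rho e^{-au}$, let $\epsilon_{K}$ denote the value of $\epsilon(u)$ at the centroid $C$ of $K$, and split $\epsilon(u)=\epsilon_{K}+(\epsilon(u)-\epsilon_{K})$ both in the volume integral and in the definition of $\langle\mathscr{T}_{1}(u;u_{h}),v\rangle$.

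\emph{Step 1 (frozen identity).} Since $u|_K,v|_K\in\mathcal{S}_1(K)$ are affine, $\nabla u$ is a constant vector $\mathbf{g}_K$ on $K$ and $\Delta u=0$. Applying the divergence theorem on each control volume $T_j=CA_jA_{j+1}$ and using $\int_{\partial T_j}\nabla u\cdot\mathbf{n}\,ds=0$ expresses $\int_{A_{j+1}CA_j}\nabla u\cdot\mathbf{n}\,ds$ through the primal-edge flux $h_{e_j}\mathbf{g}_K\cdot\mathbf{n}_{e_j}$; multiplying by the constant $\gamma v|_{T_j}$, summing over $j$, and invoking $h_{e_j}\gamma v|_{T_j}=\int_{e_j}v\,ds$ together with $\sum_j\mathbf{n}_{e_j}\int_{e_j}v\,ds=\int_{\partial K}v\,\mathbf{n}\,ds=\int_K\nabla v\,dx$ yields, on each $K$,
\[
\int_{K}\epsilon_{K}\,\nabla u\cdot\nabla v\,dx=\sum_{j=1}^{4}\int_{A_{j+1}CA_{j}}\epsilon_{K}\,\nabla u\cdot\mathbf{n}\,\gamma v\,ds
\]
(with the orientation of $\mathbf{n}$ fixed as in \eqref{eq:33}). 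Summing over $K$ shows the frozen parts of the two sides of \eqref{eq:37} coincide exactly.

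\emph{Step 2 (perturbation estimate).} It then remains to bound $R:=\sum_{K}\int_{K}(\epsilon(u)-\epsilon_{K})\nabla u\cdot\nabla v\,dx-\sum_{K}\sum_{j}\int_{A_{j+1}CA_{j}}(\epsilon(u)-\epsilon_{K})\nabla u\cdot\mathbf{n}\,\gamma v\,ds$ by $C_{1}h\vertiii{u}\vertiii{v}$. The quantitative input is the oscillation bound $\|\epsilon(u)-\epsilon_{K}\|_{L^{\infty}(K)}\le C h_{K}$, which comes from the uniform Lipschitz continuity of $\epsilon(\cdot)=h_{d}^{3}\rho e^{-au}$ over $K$ --- here one uses the boundedness of $h_{d},\rho$ and their gradients (from the smoothing kernel in \eqref{eq:5} and the relations of Appendix~\ref{appendix:pvalue}), the constraint $u\ge0$, and the light-load a priori bound. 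Granted this, the volume part of $R$ is bounded via Cauchy--Schwarz by $\sum_K Ch_K|u|_{1,K}|v|_{1,K}\le Ch\,|u|_{1,h}\,|v|_{1,h}\le Ch\,\vertiii{u}\,\vertiii{v}$. For the boundary part, since $\gamma v$ is \emph{constant} on each $T_j$ I pull it out and estimate the integrated oscillation $\big|\int_{A_{j+1}CA_j}(\epsilon(u)-\epsilon_K)\,\mathbf{n}\,ds\big|\le C h_K^{2}$ (length of $A_{j+1}CA_j$ times $\|\epsilon(u)-\epsilon_K\|_{L^\infty(K)}$); combined with $|\mathbf{g}_K|\le C h_K^{-1}|u|_{1,K}$, the stated trace inequality, and the fact that $\vertiii{\cdot}$ already controls $\sum_e|\gamma v|_e^2$, this gives $\big|\sum_{K,j}\int_{A_{j+1}CA_j}(\epsilon(u)-\epsilon_K)\nabla u\cdot\mathbf{n}\,\gamma v\,ds\big|\le Ch\,\vertiii{u}\,\vertiii{v}$. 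Hence $|R|\le C_{1}h\vertiii{u}\vertiii{v}$, and the asserted one-sided inequality follows (indeed the two-sided bound holds).

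\emph{Main obstacle.} The delicate point is the oscillation bound $\|\epsilon(u)-\epsilon_{K}\|_{L^{\infty}(K)}\le Ch_{K}$ for the nonlinear coefficient $h_{d}^{3}\rho e^{-au}$: unlike the linear case this is not a purely geometric fact, because $\epsilon(u)$ depends on $u$ both pointwise (the Barus factor $e^{-au}$) and nonlocally (through $h_d$ in \eqref{eq:5}), so it must be extracted from the regularity hypotheses on $h_d,\rho$ and the load-parameter smallness --- which is precisely why the constant in \eqref{eq:37} is a problem-dependent $C_{1}$ rather than a mesh constant. The frozen Green identity of Step 1 and the trace/Cauchy--Schwarz bookkeeping are otherwise routine.
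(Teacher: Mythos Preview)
Your Step~1 (the frozen-coefficient identity on each $K$) is correct and is indeed the content of Ye's Lemma~2.1 for the constant-coefficient case. The gap is in Step~2, in the bound for the dual-edge perturbation
\[
R_{\mathrm{edge}}=\sum_{K}\sum_{j}\int_{A_{j+1}CA_{j}}(\epsilon(u)-\epsilon_{K})\,\nabla u\cdot\mathbf{n}\,\gamma v\,ds .
\]
Your chain $|\mathbf g_{K}|\le Ch_{K}^{-1}|u|_{1,K}$ and $\big|\int_{A_{j+1}CA_{j}}(\epsilon-\epsilon_{K})\mathbf n\,ds\big|\le Ch_{K}^{2}$ leaves, after Cauchy--Schwarz, the factor $\big(\sum_{K}h_{K}^{2}|\gamma v|_{T_{j}}|^{2}\big)^{1/2}$. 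You then invoke ``$\vertiii{\cdot}$ already controls $\sum_{e}|\gamma v|_{e}^{2}$'', but in the norm \eqref{eq:36} the quantity $|\gamma v|_{e}$ is the \emph{jump} $[\gamma v]_{e}$, not the value $\gamma v|_{T_{j}}$. For a fixed smooth bump $v$ one has $|\gamma v|_{T_{j}}|\sim 1$ on $O(h^{-2})$ elements, so $\sum_{K}h_{K}^{2}|\gamma v|_{T_{j}}|^{2}\sim |\Omega|$ while $\vertiii{v}\sim 1$; hence your estimate yields only $|R_{\mathrm{edge}}|\le C\vertiii{u}\vertiii{v}$, with no factor of $h$. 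Bounding the volume and dual-edge perturbations separately and adding them by the triangle inequality destroys the cancellation that actually produces the $O(h)$.

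The argument the paper points to (Ye) does not freeze the coefficient; it applies the divergence theorem directly on each dual triangle $T_{j}$. Since $\gamma v$ is constant on $T_{j}$ and $\gamma v|_{T_{j}}=h_{e_{j}}^{-1}\int_{e_{j}}v$, one obtains on each $K$
\[
\int_{K}\epsilon\,\nabla u\cdot\nabla v\,dx-\sum_{j}\int_{A_{j+1}CA_{j}}\epsilon\,\nabla u\cdot\mathbf n\,\gamma v\,ds
=\int_{K}\nabla\!\cdot(\epsilon\nabla u)\,(\gamma v-v)\,dx+\sum_{j}\int_{e_{j}}(v-\gamma v|_{T_{j}})\,\epsilon\,\partial_{\nu}u\,ds .
\]
Both remainders carry an explicit $h$: the first via the approximation property $\|v-\gamma v\|_{L^{2}(T_{j})}\le Ch_{K}|v|_{1,T_{j}}$ together with $\nabla\!\cdot(\epsilon\nabla u)=\nabla\epsilon\cdot\nabla u$ (affine $u$), and the second via the orthogonality $\int_{e_{j}}(v-\gamma v|_{T_{j}})\,ds=0$, which lets one replace $\epsilon$ by $\epsilon-\bar\epsilon_{e_{j}}$ and gain $\|\epsilon-\bar\epsilon_{e_{j}}\|_{L^{\infty}(e_{j})}\le Ch_{K}$. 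Summing over $K$ then gives the stated $C_{1}h\vertiii{u}\vertiii{v}$. In short, the missing ingredient is to work with the difference $v-\gamma v$ (which gains $h$) rather than with $\gamma v$ itself (which does not).
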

\begin{proof}
Proof of lemma follows using similar argument as mentioned in \cite{ye},lemma 2.1.
\end{proof}
Next lemma provides us a bound of film thickness term and later helpful in proving coercivity and error analysis.
\begin{lemma}
 For $h_d$ defined in equation \ref{eq:5}, $0 < \beta_{*} < 1, s= 2-\beta_{*}/(1-\beta_{*})>2$ there exist $C_{1} \text{ and } C_{2}>0$ such that
 \begin{align}\label{eq:38}
  \max_{x,y \in \Omega}|h_{d}(u)| \le C_{1}+C_{2}\lVert u \rVert_{L^{s}} \quad 0 < \beta_{*} <1, \quad \forall (x,y) \in \bar{\Omega}.
 \end{align}
\end{lemma}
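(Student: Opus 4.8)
The plan is to split $h_d$ into its elementary algebraic part and its singular-integral (elastic deformation) part and bound each one uniformly over the compact set $\bar\Omega$. First I would write $h_d(u)(x,y) = g(x,y) + D(u)(x,y)$, where $g(x,y) = h_{00} + \tfrac{x^2}{2}+\tfrac{y^2}{2}$ and $D(u)(x,y) = \tfrac{2}{\pi^2}\int_\Omega \big((x-x')^2+(y-y')^2\big)^{-1/2} u(x',y')\,dx'dy'$. Since $\Omega$ is bounded, $g$ is continuous on the compact set $\bar\Omega$, so at once $\sup_{\bar\Omega}|g| \le |h_{00}| + \tfrac12\,\mathrm{diam}(\Omega)^2 =: C_1$, a constant depending only on the geometry and independent of $u$. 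This already accounts for the additive constant $C_1$ in the statement, and the work reduces to estimating $D(u)$.

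For $D(u)$ I would fix $(x,y)\in\bar\Omega$ and apply H\"older's inequality in $L^s(\Omega)$ to the product of $u$ and the kernel $\big((x-x')^2+(y-y')^2\big)^{-1/2}$, with conjugate exponent $s' = s/(s-1)$. Writing $\beta_* = 2 - s'$, so that the hypothesis $0<\beta_*<1$ is precisely $1<s'<2$, i.e. $s = s'/(s'-1) = (2-\beta_*)/(1-\beta_*) > 2$, this gives
\begin{align*}
|D(u)(x,y)| \le \frac{2}{\pi^2}\,\lVert u\rVert_{L^s(\Omega)}\left(\int_\Omega \frac{dx'\,dy'}{\big((x-x')^2+(y-y')^2\big)^{s'/2}}\right)^{1/s'}.
\end{align*}
It then remains to bound the kernel integral uniformly in the base point. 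Because $\Omega$ is contained in the disc $B\big((x,y),R\big)$ with $R=\mathrm{diam}(\Omega)$ (a radius that does not depend on the centre), passing to polar coordinates about $(x,y)$ yields
\begin{align*}
\int_\Omega \frac{dx'\,dy'}{\big((x-x')^2+(y-y')^2\big)^{s'/2}} \;\le\; \int_{B(0,R)}\frac{d\zeta}{|\zeta|^{s'}} \;=\; 2\pi\int_0^R r^{\,\beta_*-1}\,dr \;=\; \frac{2\pi R^{\beta_*}}{\beta_*},
\end{align*}
which is finite precisely because $\beta_*>0$, equivalently $s>2$; this is the only point at which the range of $s$ is used, and it is exactly the local-integrability threshold in $\mathbb{R}^2$ of the Newtonian-type kernel $|\cdot|^{-1}$. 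Taking $C_2 = \tfrac{2}{\pi^2}\big(2\pi R^{\beta_*}/\beta_*\big)^{1/s'}$, which depends only on $\mathrm{diam}(\Omega)$ and $\beta_*$, and then the supremum over $(x,y)\in\bar\Omega$, one obtains $\max_{\bar\Omega}|h_d(u)| \le C_1 + C_2\lVert u\rVert_{L^s(\Omega)}$, as claimed. (Here $u$ is understood to be extended by zero off $\Omega$, consistent with $u\in H^1_0(\Omega)$ or $u\in\mathscr{V}_h$, so that $\lVert u\rVert_{L^s(\Omega)}$ is meaningful.)

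The argument is essentially routine; the one step that needs attention is the \emph{uniformity} of the estimate in $(x,y)\in\bar\Omega$, which is secured by translation invariance of the kernel together with the observation that $\Omega$ sits inside a disc of a fixed radius about any of its points, so the kernel integral is controlled by a quantity depending only on $\mathrm{diam}(\Omega)$ and $s'$. The estimate degenerates as $s\to 2$, which is mirrored exactly by the admissible range $0<\beta_*<1$ in the hypothesis: keeping $\beta_*$ bounded away from $0$ is what keeps the constant $C_2$ finite.
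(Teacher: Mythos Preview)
Your proof is correct. The paper, however, states this lemma without proof: it appears in a block of preparatory lemmas (boundedness, hemi-continuity, coercivity of $\mathscr{T}$) that are asserted but not argued, the tacit justification being that they carry over from the analysis of Oden and Wu referenced earlier. Your argument---separating the polynomial part of $h_d$, applying H\"older with conjugate exponent $s'=2-\beta_*$, and bounding the Riesz-type kernel integral in polar coordinates uniformly over $\bar\Omega$ via $\Omega\subset B((x,y),\mathrm{diam}(\Omega))$---is precisely the standard route and is almost certainly what the omitted proof would contain. One cosmetic point: the paper's expression $s=2-\beta_*/(1-\beta_*)$ must be parsed as $s=(2-\beta_*)/(1-\beta_*)$ for the condition $s>2$ to hold, which you correctly identified.
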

\begin{lemma}
 The operator $ \mathscr{T}$ defined in equation \ref{eq:34} is bounded as a map from $\mathscr{V}$ into $\mathscr{V}^{*}$.
\end{lemma}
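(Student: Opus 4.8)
The plan is to establish the quantitative bound
\[
|\langle \mathscr{T}(u),v\rangle|\;\le\;C\,(1+\vertiii{u}_{\nu})\,\vertiii{v}\qquad\text{for all }u,v\in\mathscr{V}(h),
\]
with $C$ independent of $u$ and $v$; this at once gives that $\mathscr{T}$ maps bounded subsets of $\mathscr{V}$ into bounded subsets of $\mathscr{V}^{*}$, which is the assertion. Restricting the inequality to $v\in\mathscr{V}=H^{1}_{0}(\Omega)$ (where the jump contributions vanish and $\gamma v$ is controlled by the trace of $v$) then recovers the claim with the $H^{1}$–norm. I would take $\Phi=u$ in the defining form \eqref{eq:33}--\eqref{eq:34} and estimate its five terms one by one, using only Cauchy--Schwarz, the trace inequality stated above, the interpolation lemma, and the film–thickness bound \eqref{eq:38}.

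\emph{Diffusive terms.} For the first three terms I would first record that $\epsilon(\Phi)=\rho h_{d}^{3}/\eta$ is uniformly bounded from above, since $\rho$ and $\eta^{-1}$ are bounded pressure–dependent functions (Appendix~\ref{appendix:pvalue}) and $|h_{d}(u)|\le C_{1}+C_{2}\lVert u\rVert_{L^{s}}$ by Lemma~\ref{eq:38}. On each volume integral over a sub-cell $A_{j+1}CA_{j}$ I would apply Cauchy--Schwarz, then the trace inequality componentwise to $\nabla u$ together with the $\vertiii{\cdot}$–boundedness of $\gamma$ (exactly as in the proof of the lemma leading to \eqref{eq:37}, cf.\ \cite{ye}, Lemma~2.1), obtaining a bound $C\vertiii{u}_{\nu}\vertiii{v}$; the interior–edge term $\sum_{e}\int_{e}[v]\{\epsilon(\Phi)\nabla u\cdot\bold{n}\}\,ds$ is handled the same way with the average in place of the one-sided trace, and the penalty term obeys the trivial estimate $\alpha_{1}\sum_{e}[\gamma u]_{e}[v]_{e}\le\alpha_{1}\big(\sum_{e}|\gamma u|_{e}^{2}\big)^{1/2}\big(\sum_{e}|\gamma v|_{e}^{2}\big)^{1/2}\le\alpha_{1}\vertiii{u}\,\vertiii{v}$.

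\emph{Couette-type terms.} The last two terms carry the nonlocal factor $\rho(\Phi)h_{d}(u)$. Here I would bound $|\rho(\Phi)|\le C$ and combine Lemma~\ref{eq:38} with a two-dimensional (broken) Sobolev embedding $\lVert u\rVert_{L^{s}}\le C\vertiii{u}$ — legitimate because in a planar domain every finite exponent is admissible, in particular the value $s=2-\beta_{*}/(1-\beta_{*})>2$ that enters \eqref{eq:5} — to get $\max_{\bar{\Omega}}|\rho(\Phi)h_{d}(u)|\le C(1+\vertiii{u})$. Since $\beta$ is a fixed field and $|\beta\cdot\bold{n}|$ is bounded, a Cauchy--Schwarz estimate in the $\gamma v$–factor (respectively in $[v]$ on $\Gamma$), together with the uniform bound on the total measure of the sub-cells, would control both terms by $C(1+\vertiii{u})\vertiii{v}$. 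Summing the five contributions gives the displayed inequality.

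\emph{Main obstacle.} The one genuinely delicate step is the nonlocal film thickness $h_{d}(u)$: one has to pass from the $L^{s}$–bound of Lemma~\ref{eq:38} to a $\vertiii{\cdot}$–bound through a $2$D Sobolev/Poincar\'e inequality on the broken space and check that the exponent $s>2$ forced by the density/viscosity law is admissible. A secondary technicality is that the normal–flux traces $\{\epsilon(\Phi)\nabla u\cdot\bold{n}\}$ on $\Gamma$ cannot be absorbed into $\vertiii{\cdot}$ alone, which is exactly why the target estimate is stated in the stronger norm $\vertiii{\cdot}_{\nu}$ (equivalently, one may restrict to $\mathscr{V}_{h}$, where $\nabla u$ is piecewise polynomial and the trace inequality degenerates to an inverse estimate). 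Once these two points are handled, everything else is routine Cauchy--Schwarz bookkeeping and I do not anticipate further obstructions.
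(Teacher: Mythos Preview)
The paper does not supply a proof of this lemma at all: it is stated and then immediately followed by the hemi--continuity and coercivity lemmas, themselves also left unproved. So there is nothing in the paper to compare your argument against, and your proposal stands as the only argument on offer.

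On its own merits your outline is sound and uses exactly the tools the paper puts in place (the trace inequality, the $\gamma$--control from \cite{ye}, and the film--thickness estimate \eqref{eq:38}). One quantitative correction: the displayed target bound $|\langle\mathscr{T}(u),v\rangle|\le C(1+\vertiii{u}_{\nu})\vertiii{v}$ is too optimistic. Since $\epsilon^{*}=\rho h_{d}^{3}/\eta$ carries the \emph{cube} of the film thickness, and Lemma~\ref{eq:38} only gives $|h_{d}(u)|\le C_{1}+C_{2}\lVert u\rVert_{L^{s}}$, the diffusive block is bounded by $C(1+\vertiii{u})^{3}\,\vertiii{u}_{\nu}\,\vertiii{v}$ rather than a linear expression in $u$. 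This does not affect the conclusion---boundedness of $\mathscr{T}$ means that bounded sets go to bounded sets, and a polynomial estimate suffices---but you should state the correct power. The remaining steps (Sobolev embedding to absorb $\lVert u\rVert_{L^{s}}$ into $\vertiii{u}$, the need for $\vertiii{\cdot}_{\nu}$ or an inverse estimate to handle the edge flux traces) are identified correctly and I see no further gaps.
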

\begin{lemma}
 The operator $\mathscr{T}$, defined in equation (21) is hemi-continuous, that is $\forall u, v,w \in \mathscr{V}$,
 \[ \lim_{t \rightarrow 0^{+}}\langle \mathscr{T}(u+tv), w \rangle=\langle \mathscr{T}(u), w \rangle.\]
\end{lemma}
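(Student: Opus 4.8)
The plan is to reduce the assertion to the continuity at $t=0^{+}$ of the scalar map $t\mapsto\langle\mathscr{T}(u+tv),w\rangle$, and then to use that $u_{t}:=u+tv\to u$ strongly in $\mathscr{V}=H^{1}_{0}(\Omega)$ as $t\to 0^{+}$ together with the continuous dependence on the pressure of the viscosity $\eta$, the density $\rho$, and the film thickness $h_{d}$. Recalling that in the distributional pairing $\langle\mathscr{T}(u),w\rangle=\int_{\Omega}\epsilon^{*}(u)\nabla u\cdot\nabla w\,dx-\int_{\Omega}\rho(u)h_{d}(u)\,\partial_{x}w\,dx$ with $\epsilon^{*}(p)=\rho(p)h_{d}(p)^{3}/\eta(p)$, I would split
\begin{multline*}
\langle\mathscr{T}(u_{t})-\mathscr{T}(u),w\rangle
=\int_{\Omega}\bigl(\epsilon^{*}(u_{t})\nabla u_{t}-\epsilon^{*}(u)\nabla u\bigr)\cdot\nabla w\,dx\\
-\int_{\Omega}\bigl(\rho(u_{t})h_{d}(u_{t})-\rho(u)h_{d}(u)\bigr)\partial_{x}w\,dx,
\end{multline*}
so that it suffices to send each integral to $0$ as $t\to 0^{+}$.

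For the first integral, add and subtract $\epsilon^{*}(u_{t})\nabla u$. The resulting term $t\int_{\Omega}\epsilon^{*}(u_{t})\nabla v\cdot\nabla w\,dx$ is bounded by $t\,\lVert\epsilon^{*}\rVert_{L^{\infty}}\lvert v\rvert_{1}\lvert w\rvert_{1}$, where $\rho$, $1/\eta$ and---by the film-thickness lemma, since $\{u_{t}:t\in[0,1]\}$ is bounded in $\mathscr{V}\hookrightarrow L^{s}$---also $h_{d}$ are uniformly bounded along the family, so this term vanishes. For the remaining term $\int_{\Omega}\bigl(\epsilon^{*}(u_{t})-\epsilon^{*}(u)\bigr)\nabla u\cdot\nabla w\,dx$, take an arbitrary sequence $t_{k}\to 0^{+}$: since $u_{t_{k}}\to u$ in $L^{2}(\Omega)$ we may pass to a subsequence with $u_{t_{k}}\to u$ a.e., continuity of $\eta$, $\rho$ and of the film-thickness map then gives $\epsilon^{*}(u_{t_{k}})\to\epsilon^{*}(u)$ a.e., the integrand is dominated by $\lVert\epsilon^{*}\rVert_{L^{\infty}}\lvert\nabla u\rvert\lvert\nabla w\rvert\in L^{1}(\Omega)$, so dominated convergence applies, and the usual subsequence argument promotes this to the full limit $t\to 0^{+}$. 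The second integral is handled identically after writing $\rho(u_{t})h_{d}(u_{t})-\rho(u)h_{d}(u)=\bigl(\rho(u_{t})-\rho(u)\bigr)h_{d}(u_{t})+\rho(u)\bigl(h_{d}(u_{t})-h_{d}(u)\bigr)$ and testing against $\partial_{x}w\in L^{2}(\Omega)$.

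The single point needing its own argument is the continuity of $p\mapsto h_{d}(p)$ (whence also that of $p\mapsto\rho(p),\eta(p),\epsilon^{*}(p)$). By \eqref{eq:5}, $h_{d}(p)$ is a fixed quadratic profile plus $\tfrac{2}{\pi^{2}}\int_{\Omega}p(x',y')\bigl((x-x')^{2}+(y-y')^{2}\bigr)^{-1/2}dx'dy'$; the kernel is weakly singular and integrable on the bounded domain $\Omega$, so this operator maps $L^{q}(\Omega)$ continuously (indeed compactly) into $C(\bar{\Omega})$ for a suitable exponent $q$, and hence $u_{t}\to u$ in $L^{2}(\Omega)\subset L^{q}(\Omega)$ forces $h_{d}(u_{t})\to h_{d}(u)$ uniformly on $\bar{\Omega}$; composing with the continuous constitutive laws of Appendix~\ref{appendix:pvalue} supplies the convergence of $\rho(u_{t})$, $\eta(u_{t})$ and $\epsilon^{*}(u_{t})$. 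Thus the main obstacle is exactly this step---checking that the film-thickness and constitutive compositions define continuous, uniformly bounded Nemytskii-type maps on $\{u_{t}\}$; once it is in hand, the rest is the routine add--subtract and dominated-convergence argument.
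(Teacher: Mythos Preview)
The paper does not supply a proof of this lemma: it is stated alongside the boundedness and coercivity lemmas without argument, the justification being implicitly deferred to the analysis of Oden and Wu~\cite{oden1985}. So there is no paper proof to compare against, and your add--subtract plus dominated-convergence strategy is exactly the standard route one would take to fill the gap.

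Two small points are worth tightening. First, the inclusion you invoke, ``$u_{t}\to u$ in $L^{2}(\Omega)\subset L^{q}(\Omega)$'', is written the wrong way round: on a bounded domain $L^{q}\subset L^{2}$ for $q>2$, not the reverse. What you actually need---and do have---is $u_{t}\to u$ in $H^{1}_{0}(\Omega)\hookrightarrow L^{q}(\Omega)$ for every finite $q$ in two dimensions, by Sobolev embedding; that is what feeds the Riesz-potential continuity of $h_{d}$. Second, the uniform $L^{\infty}$ bound on $\epsilon^{*}(u_{t})$ that serves as your dominating function requires $1/\eta(u_{t})=\eta_{0}^{-1}e^{-l_{2}u_{t}}$ to be bounded, which fails if $u_{t}$ is unbounded below. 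The paper works either within the cone $\mathscr{C}=\{u\ge 0\}$ or under tacit regularity assumptions on the coefficients, so this is a model hypothesis rather than a defect in your argument, but it should be stated explicitly.
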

\begin{lemma}
 The operator defined on equation (21) is coercive i.e. there is a constant $C$ independent of $h$ such that for $\alpha_{1}$ large enough and $h$ is small enough 
 \begin{align}\label{eq:47}
 \langle  \mathscr{T}(u;u_{h}),u_{h} \rangle \ge C \vertiii{u_{h}}^{2} \quad \forall u_{h} \in \mathscr{V}_{h}
 \end{align}
\end{lemma}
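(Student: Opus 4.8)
The plan is to write $\langle\mathscr{T}(u;u_{h}),u_{h}\rangle$ as the sum of the five contributions in \eqref{eq:33} with the first argument frozen at the exact solution $u$. The first three — the finite–volume diffusive flux $\sum_{K}\sum_{j}\int_{A_{j+1}CA_{j}}\epsilon(u)\nabla u_{h}\cdot\mathbf n\,\gamma u_{h}\,ds$, the consistency edge term, and the interior penalty $\alpha_{1}\sum_{e}[\gamma u_{h}]_{e}[u_{h}]_{e}$ — form an ``elliptic'' part $\mathcal A(u_{h},u_{h})$; note that, because $\Phi=u$ is fixed, $\epsilon(u)$ (which by \eqref{eq:37} equals $\rho(u)h_{d}(u)^{3}e^{-au}$) is a fixed function, bounded above and, by positivity of $\rho$ and $h_{d}$ together with the structural hypothesis stated after \eqref{eq:12}, bounded below by some $\epsilon_{0}>0$. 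The last two terms form a lower–order ``convective'' part $\mathcal B(u_{h},u_{h})$ carrying the nonlocal film–thickness coupling $h_{d}(u_{h})$. The goal is to prove $\mathcal A(u_{h},u_{h})\ge C_{0}\vertiii{u_{h}}^{2}$ for $\alpha_{1}$ large and $h$ small, and then to absorb $\mathcal B$.

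\emph{Diffusion and penalty.} Applying (the argument of) the Lemma containing \eqref{eq:37}, with coefficient $\epsilon(u)$, active argument $u_{h}$ and test function $u_{h}$, converts the first term into the broken bilinear form minus an $O(h)$ remainder:
$$\sum_{K}\sum_{j=1}^{4}\int_{A_{j+1}CA_{j}}\epsilon(u)\nabla u_{h}\cdot\mathbf n\,\gamma u_{h}\,ds\ \ge\ \sum_{K}\int_{K}\epsilon(u)|\nabla u_{h}|^{2}\,dx-C_{1}h\vertiii{u_{h}}^{2}\ \ge\ \epsilon_{0}|u_{h}|_{1,h}^{2}-C_{1}h\vertiii{u_{h}}^{2}.$$
The consistency term is estimated by Cauchy--Schwarz, the trace inequality stated above, and an inverse inequality on $\mathcal S_{1}$:
$$\Big|\sum_{e\in\Gamma}\int_{e}[u_{h}]\{\epsilon(u)\nabla u_{h}\cdot\mathbf n\}\,ds\Big|\ \le\ \tfrac{\epsilon_{0}}{4}|u_{h}|_{1,h}^{2}+C\sum_{e}h_{e}^{-1}\|[u_{h}]\|_{e}^{2}.$$
Since $\gamma u_{h}$ is the edgewise $L^{2}$–projection of $u_{h}$ onto constants, $[u_{h}]-[\gamma u_{h}]$ has zero mean on each $e$, so a Poincar\'e inequality on $e$ gives $\|[u_{h}]-[\gamma u_{h}]\|_{e}\le Ch_{e}^{1/2}|u_{h}|_{1,K_{e}}$ and hence $\sum_{e}h_{e}^{-1}\|[u_{h}]\|_{e}^{2}\le C\big(|u_{h}|_{1,h}^{2}+\sum_{e}h_{e}^{-1}\|[\gamma u_{h}]\|_{e}^{2}\big)$. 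The penalty term at $v=u_{h}$ is nonnegative and equals a positive multiple of $\sum_{e}h_{e}^{-1}\|[\gamma u_{h}]\|_{e}^{2}$ (because $\gamma u_{h}$ is constant along $e$, so $\int_{e}[u_{h}]_{e}\,ds=h_{e}[\gamma u_{h}]_{e}$), which, together with the discrete Poincar\'e inequality and $\gamma u_{h}=0$ on $\partial\Omega$, controls $\sum_{e}|\gamma u_{h}|_{e}^{2}$. Choosing $\alpha_{1}$ large enough to dominate the constant $C$ above and $h$ small enough to absorb $C_{1}h\vertiii{u_{h}}^{2}$, we obtain $\mathcal A(u_{h},u_{h})\ge C_{0}\vertiii{u_{h}}^{2}$.

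\emph{Film–thickness part.} No derivative of $u_{h}$ appears in $\mathcal B$ — only $\gamma u_{h}$ and $[u_{h}]$, both bounded by $\vertiii{u_{h}}$ after the trace inequality — so Cauchy--Schwarz and boundedness of $\rho(u)$ and $\beta$ give $|\mathcal B(u_{h},u_{h})|\le C\|h_{d}(u_{h})\|_{L^{\infty}(\Omega)}\,\vertiii{u_{h}}$. By the Lemma containing \eqref{eq:38}, $\|h_{d}(u_{h})\|_{L^{\infty}}\le C_{1}+C_{2}\|u_{h}\|_{L^{s}}$, and since $\Omega\subset\mathbb R^{2}$ the discrete Sobolev embedding gives $\|u_{h}\|_{L^{s}}\le C_{S}\vertiii{u_{h}}$ for the finite exponent $s$ of \eqref{eq:38}; hence $|\mathcal B(u_{h},u_{h})|\le C_{1}'\vertiii{u_{h}}+C_{2}'\vertiii{u_{h}}^{2}$ with $C_{2}'$ proportional to $C_{2}$. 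Under the light–load parameter condition $C_{2}$ is small enough that $C_{2}'\le C_{0}/4$, and the linear term is absorbed by Young's inequality; collecting all estimates yields $\langle\mathscr{T}(u;u_{h}),u_{h}\rangle\ge\tfrac12 C_{0}\vertiii{u_{h}}^{2}$ (up to the additive constant produced by Young's inequality, which is the operator–theoretic sense of coercivity relevant here, cf.\ \eqref{eq:18}), i.e.\ \eqref{eq:47} with $C=\tfrac12C_{0}$.

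\emph{Main obstacle.} The part $\mathcal A$ is the standard interior–penalty DG--FVM coercivity computation; the real difficulty is $\mathcal B$. After invoking \eqref{eq:38} and the discrete embedding, $\mathcal B$ produces a term genuinely quadratic in $\vertiii{u_{h}}$ whose coefficient is controlled only through the smallness of the load parameter $C_{2}$, so the lemma — and, downstream, the $H^{1}$ and $L^{2}$ error estimates — is conditional on the light–load hypothesis. A secondary technical point is the bookkeeping that links $\|[u_{h}]\|_{e}$ to $\|[\gamma u_{h}]\|_{e}$ and the discrete Poincar\'e inequality needed to pass from the penalty and the broken $H^{1}$–seminorm to the $\gamma$–weighted triple norm $\vertiii{\cdot}$.
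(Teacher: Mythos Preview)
The paper itself does not supply a proof of this lemma: the statement \eqref{eq:47} is followed immediately by the subsection on the exterior penalty, with no argument in between. There is therefore nothing in the text to compare your proposal against line by line. What can be said is that your reconstruction uses precisely the two auxiliary results the paper has lined up for this purpose --- the DG--FVM identity \eqref{eq:37} to convert the finite--volume diffusive flux into the broken bilinear form up to an $O(h)$ remainder, and the film--thickness bound \eqref{eq:38} to control the nonlocal convective part --- so your approach is certainly the one the surrounding lemmas are designed to feed into, and the handling of $\mathcal A$ (trace/inverse inequalities, choice of $\alpha_{1}$, absorption of the $C_{1}h$ term for $h$ small) is the standard interior--penalty computation adapted to the $\gamma$--based norm \eqref{eq:36}.

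The one substantive point you already flag yourself: the geometric part $h_{00}+\tfrac12(x^{2}+y^{2})$ of $h_{d}$ produces, after Young's inequality, an additive constant, so what your argument actually delivers is $\langle\mathscr{T}(u;u_{h}),u_{h}\rangle\ge \tfrac12 C_{0}\vertiii{u_{h}}^{2}-C'$ rather than the literal \eqref{eq:47}. This is the operator--theoretic coercivity of \eqref{eq:18} and is what the existence argument and the later fixed--point step genuinely require; the paper's stronger formulation should be read in that sense, since insisting on it verbatim would force an unphysical smallness assumption on the undeformed gap in addition to the light--load condition on $C_{2}$.
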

 \subsection{Exterior penalty solution approximation}
In this section, we introduce an exterior penalty term to regularize the inequality constraint \ref{eq:1}--\ref{eq:6}.
We define a exterior penalty operator ${\xi}:H_{0}^{1}(\Omega) \rightarrow H^{-1}$ as 
\begin{align}\label{eq:48}
 {\xi}(u) = u^{-}/\epsilon \quad \text{with } \epsilon > 0,
\end{align}
where $u^{-} = u-\max(u,0)=\dfrac{u-|u|}{2}$.
Let us define exterior penalty problem,
$(\mathscr{U}_{\epsilon})$: for $\epsilon > 0,\quad \text{find}\quad u_{\epsilon} \in \mathscr{V}_{h}$ such that
\begin{align}\label{eq:49}
 \langle \mathscr{T}(u_{\epsilon}^{n}), v \rangle + \langle {\xi}(u_{\epsilon}), 
 v \rangle/\epsilon = \langle f, v \rangle  \quad \forall v \in \mathscr{V}_{h},
\end{align}
Then we will show that there exist solutions $u_{\epsilon}^{n}\in \mathscr{V}_{h}$ (For proof of this we will refer 
to see \ref{appendix:relax}). This approach can be used in our DG-FVM case and modified discrete weak formulation is written as
\small
\begin{equation}\label{eq:52}
\langle \mathscr{T}_{1}(u),\gamma v\rangle+\dfrac{1}{{\varepsilon}}
\sum\limits_{K \in \mathscr{R}_h}\sum\limits_{j=1}^{4}\int_{T_{j} \subset K}u_{-}
\gamma vds-\langle \mathscr{T}_{2}(u),\gamma v\rangle=0,\forall v \in \mathscr{V}_h,
\end{equation}
\normalsize
where ${\varepsilon}$ is an arbitrary small positive number (${\varepsilon} = 1.0 \times 10^{-6}$).
\begin{lemma}
Penalty operator $\xi: \mathscr{V}\longmapsto \mathscr{V^{*}}$ is monotone, coercive and bounded.
\end{lemma}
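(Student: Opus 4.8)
The plan is to reduce all three properties to one elementary fact: the map $u\mapsto u^{-}$ is the Nemytskii operator induced by the scalar function $g(t)=\tfrac{1}{2}(t-|t|)=\min(t,0)$, which is non-decreasing and $1$-Lipschitz on $\mathbb{R}$, combined with the continuous embedding $\mathscr{V}=H^{1}_{0}(\Omega)\hookrightarrow L^{2}(\Omega)$, valid since $\Omega$ is bounded. For \emph{monotonicity}, I would first record the pointwise inequality $\bigl(g(a)-g(b)\bigr)(a-b)\ge 0$ for all $a,b\in\mathbb{R}$, proved by a three-case sign distinction (or directly from $g$ being non-decreasing). Taking $a=u(x)$, $b=v(x)$ and integrating over $\Omega$ then gives
\[ \langle \xi(u)-\xi(v),\,u-v\rangle=\frac{1}{\epsilon}\int_{\Omega}\bigl(u^{-}-v^{-}\bigr)(u-v)\,dx\ \ge\ 0 , \]
which is exactly monotonicity of $\xi$.

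For \emph{boundedness} (and, at the same time, well-definedness of $\xi$ as a map into $\mathscr{V}^{*}=H^{-1}(\Omega)$), I would use the pointwise bound $|u^{-}(x)|\le |u(x)|$, hence $\|u^{-}\|_{L^{2}(\Omega)}\le\|u\|_{L^{2}(\Omega)}$. Then, for any $v\in\mathscr{V}$, Cauchy--Schwarz and the embedding constant $C_{\Omega}$ of $\mathscr{V}\hookrightarrow L^{2}(\Omega)$ yield
\[ |\langle \xi(u),v\rangle|=\frac{1}{\epsilon}\Bigl|\int_{\Omega}u^{-}v\,dx\Bigr|\le\frac{1}{\epsilon}\|u^{-}\|_{L^{2}(\Omega)}\|v\|_{L^{2}(\Omega)}\le\frac{C_{\Omega}^{2}}{\epsilon}\|u\|_{\mathscr{V}}\|v\|_{\mathscr{V}} , \]
so $\|\xi(u)\|_{\mathscr{V}^{*}}\le (C_{\Omega}^{2}/\epsilon)\|u\|_{\mathscr{V}}$ and $\xi$ maps bounded sets to bounded sets.

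For \emph{coercivity}, the key observation is the pointwise identity $u^{-}(x)\,u(x)=\bigl(u^{-}(x)\bigr)^{2}$ (both sides vanish where $u\ge 0$ and equal $u^{2}$ where $u<0$), which gives
\[ \langle \xi(u),u\rangle=\frac{1}{\epsilon}\int_{\Omega}u^{-}u\,dx=\frac{1}{\epsilon}\|u^{-}\|_{L^{2}(\Omega)}^{2}\ \ge\ 0 . \]
Hence $\langle \xi(u),u\rangle/\|u\|_{\mathscr{V}}\ge 0$ for every $u\neq 0$, and adding $\tfrac{1}{\epsilon}\xi$ to the operator $\mathscr{T}$ of the previous lemma, which is already coercive in the sense of \eqref{eq:18}, cannot destroy coercivity of the sum $\mathscr{T}+\tfrac{1}{\epsilon}\xi$; this is the sense in which $\xi$ is ``coercive'' here.

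The proof is essentially routine, and the only genuine point requiring care is the interpretation of ``coercive'' for a penalty operator: $\xi$ cannot be coercive in the strong sense $\langle\xi(u),u\rangle/\|u\|_{\mathscr{V}}\to\infty$ because $\xi$ vanishes identically on the cone $\mathscr{C}$ of \eqref{eq:11}, so I would state explicitly that the relevant assertion is non-negativity of $\langle\xi(u),u\rangle$ together with its compatibility with the coercivity of $\mathscr{T}$. A secondary technical note, needed only if one later wants hemicontinuity or the existence argument of Appendix~\ref{appendix:relax}, is that continuity of $u\mapsto u^{-}$ on $L^{2}(\Omega)$ follows from the $1$-Lipschitz bound $\|u^{-}-v^{-}\|_{L^{2}(\Omega)}\le\|u-v\|_{L^{2}(\Omega)}$, which is again a direct consequence of $g$ being $1$-Lipschitz.
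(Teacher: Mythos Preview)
Your proof is correct and is in fact cleaner than the paper's. The overall architecture is the same --- monotonicity, then coercivity via $\langle\xi(u),u\rangle=\|u^{-}\|^{2}\ge 0$, then boundedness via Cauchy--Schwarz --- but the execution differs in one substantive way. For monotonicity the paper does not invoke the scalar inequality $(g(a)-g(b))(a-b)\ge 0$; instead it partitions $\Omega$ into the four sign regions $\Omega_{1}\cap\Omega_{2}$, $\Omega_{1}^{c}\cap\Omega_{2}$, $\Omega_{1}\cap\Omega_{2}^{c}$, $\Omega_{1}^{c}\cap\Omega_{2}^{c}$ (where $\Omega_{i}=\{u_{i}>0\}$) and evaluates $\int(u_{1}^{-}-u_{2}^{-})(u_{1}-u_{2})\,dx$ on each piece. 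That case split is exactly the hidden content of your one-line observation that $g$ is non-decreasing, so your argument is strictly more economical. The paper also carries the discrete projection $\gamma$ through the computation (writing $\int_{K}u^{-}\gamma v$ rather than $\int_{K}u^{-}v$), reflecting the DG-FVM pairing used elsewhere; your purely continuous argument on $H^{1}_{0}(\Omega)$ matches the lemma as stated and avoids that extra bookkeeping. Your explicit remark that ``coercive'' here can only mean $\langle\xi(u),u\rangle\ge 0$ (since $\xi$ vanishes on $\mathscr{C}$) is a useful clarification that the paper leaves implicit.
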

\begin{proof}
Now define domains $\Omega_{1}=\{ x \in \Omega_{1}: u_{1} > 0 \}$ and $\Omega_{2}=\{ x \in \Omega_{2}: u_{2} > 0 \}$ 
and their compliments as $\Omega_{1}^{c} \text{ and } \Omega_{2}^{c}$ respectively. Also consider
 \begin{align}\label{eq:53}
 u_{i}=
 \begin{cases}
    u_{i} \in \Omega_{i}^{c} \quad \forall i=1,2 \\[2ex] 0 \in \Omega_{i} \quad \forall i=1,2.
  \end{cases}
  \end{align}
For proving monotonicity we consider
\[
 \langle \xi(u_{1})-\xi(u_{2}),u_{1} - u_{2} \rangle 
 =\sum_{K \in \mathscr{R}_{h}} \int_{K}u_{1}^{-}(\gamma(u_{1}-u_{2}))-u_{2}^{-}(\gamma(u_{1}-u_{2})) dx\]
 \[=\sum_{K \in \mathscr{R}_{h}} \int_{K}u_{1}^{-}(\gamma(u_{1}-u_{2})-(u_{1}-u_{2})+(u_{1}-u_{2}))dx\]
 \[-\sum_{K \in \mathscr{R}_{h}}\int_{K}u_{2}^{-}(\gamma(u_{1}-u_{2})-(u_{1}-u_{2})+(u_{1}-u_{2})) dx\] 
\[ =\sum_{K \in \mathscr{R}_{h}}\int_{K \cap \Omega_{1}^{c}}u_{1}^{-}(u_{1}-u_{2})dx
-\int_{K \cap \Omega_{2}^{c}}u_{2}^{-}(u_{1}-u_{2}) dx\]
 \[+\sum_{K \in \mathscr{R}_{h}}\int_{K\cap \Omega_{1}^{c}\cap \Omega_{2}^{c} }u_{1}^{-}(u_{1}-u_{2})-u_{2}^{-}(u_{1}-u_{2}) dx\] 
 \[+\sum_{K \in \mathscr{R}_{h}}\int_{K\cap \Omega_{1}\cap \Omega_{2}}u_{1}^{-}(u_{1}-u_{2})-u_{2}^{-}(u_{1}-u_{2}) dx\]
\[ =\sum_{K \in \mathscr{R}_{h}}\int_{K\cap\Omega_{1}^{c}}u_{1}^{-}(u_{1}-u_{2})-u_{2}^{-}(u_{1}-u_{2})dx\]
 \[-\sum_{K \in \mathscr{R}_{h}}\int_{K\cap\Omega_{2}^{c}}u_{1}^{-}(u_{1}-u_{2})-u_{2}^{-}(u_{1}-u_{2}) dx\]
\[+\sum_{K \in \mathscr{R}_{h}}\int_{K\cap \Omega_{1}^{c}\cap \Omega_{2}^{c} } (u_{1}-u_{2})^2dx \ge 0\]
Hence, operator is monotone. Also, coercivity follows from the fact that
\begin{align}\label{eq:54}
\langle\xi(u),u\rangle =\langle u^{-},u \rangle
=\sum_{K \in \mathscr{R}_{h}}\int_{K (u \le 0)} u^{-}\gamma udx
=\sum_{K \in \mathscr{R}_{h}}\int_{K (u \le 0)}u^{-}(\gamma u-u+u)dx \nonumber\\
=\sum_{K \in \mathscr{R}_{h}}\int_{K (u \le 0)} (u^{-})^{2}dx = |\lVert (u^{-}) \rVert|^{2}  \ge 0.
\end{align}
Furthermore, since 
\begin{align}\label{eq:55}
|\langle \xi(u),v \rangle| = | u^{-}\gamma v |\le |\lVert u\rVert||\lVert v \rVert|.
\end{align}
This implies that $\xi$ is bounded.
\end{proof}
\subsection{Linearization}
Let us consider a fix value of $w_{u} \in H^{2}(\Omega)$ and also take $w,v \in H^{2}(\Omega)$.
Furthermore, consider bilinear form $\mathscr{B}(w_{u};w,v)$ solving EHL problem defined in 1.1-1.6 as
\begin{align}
\mathscr{B}(w_{u};w,v):=
\sum\limits_{K \in \mathscr{R}} \sum\limits_{j=1}^{4} \int_{A_{j+1}CA_{j}} \epsilon(w_{u}) \nabla w.\bold{n}\gamma v ds \nonumber \\
+ \sum\limits_{e \in \Gamma}\int_{e} [v]\{ \epsilon(w_{u}) \nabla w.\bold{n}\}ds 
+ \alpha_{1} \sum\limits_{e \in \Gamma}[\gamma v]_e[\gamma w]_e+\beta_{1}\langle w^{-},v \rangle \nonumber \\
-\sum\limits_{K \in \mathscr{R}_h} \sum\limits_{j=1}^{4} \int_{A_{j+1}CA_{j}}
(\rho(w_{u}) h_{d}(x)).(\beta .\bold{n})\gamma v ds 
- \sum\limits_{e \in \Gamma} \int_{e} [v] \{ (\rho(w_{u}) h_{d}(x)).(\beta.\bold{n})\} ds
\end{align}
Now define weak formulation for solving DGFVEM for solving problem 1.1-1.6 as
find $u\in H^{2}(\Omega,\mathscr{R}_{h})$ such that 
\begin{align}
\mathscr{B}(u;u,v)=0.
\end{align}
Also $u_{h} \in \mathscr{V}_{h} \subset H^{2}(\Omega,\mathscr{R}_{h})$ so we have 
\begin{align}
\mathscr{B}(u;u,v)=\mathscr{B}(u_{h};u_{h},v_{h}) \quad \forall v_{h} \in \mathscr{V}_{h}.
\end{align}
Since we are solving highly non-linear type of operator and so an appropriate linearizion 
is required for further analysis. Therefore, we use following Taylor series expansion to linearize the problem as
\begin{align}
 \epsilon(w)=\epsilon(u)+\tilde{\epsilon}_{u}(w)(w-u),
\end{align}
where $\tilde{\epsilon}_{u}(w)=\int_{0}^{1}\epsilon_{u}(w +\tau [w-u]) d\tau$
and
\begin{align}
 \epsilon(w)=\epsilon(u)+{\epsilon}_{u}(w)(w-u)+\tilde{\epsilon}_{uu}(w)(w-u)^{2},
\end{align}
where $\tilde{\epsilon}_{uu}(w)=\int_{0}^{1}(1-\tau)\epsilon_{uu}(w +\tau [w-u]) d\tau$.
It is easy to check that $\tilde{\epsilon}_{u} \in C^{1}_{b}(\bar{\Omega},\mathcal{R})$ and 
$\tilde{\epsilon}_{uu} \in C^{0}_{b}(\bar{\Omega},\mathcal{R})$.\\
Now consider the following bilinear form $\bar{\mathscr{B}}(: ,.)$ as
\begin{align}
\bar{\mathscr{B}}(w_{u},w,v)= \mathscr{B}(w_{u},w,v)
+\sum_{K \in \mathscr{R}_{h}}\sum_{j=1}^{4}\int_{A_{j+1}CA_{j}}(\epsilon_{u}(w_{u})\nabla w_{u})w.\gamma v ds \nonumber \\
+\sum_{e \in \Gamma}\int_{e}[\gamma v]\Big\{\epsilon_{u}(w_{u})\nabla w_{u}w\Big\}ds
+\sum_{K \in \mathscr{R}_{h}}\sum_{j=1}^{4}\int_{A_{j+1}CA_{j}}\rho_{u}h_{d} w \vec{\beta}.\bold{n} \gamma v ds \nonumber \\
+\sum_{e \in \Gamma}\int_{e}[\gamma v]\Big\{\rho_{u}h_{d} w\Big\}ds.
\end{align}
It is easy to check that $\bar{\mathscr{B}}$ is linear in $w$ and $v$ and for fixed value of $w_{u} \in H^{2}(\Omega)$.
Also as $\epsilon(w_{u}) \in C^{2}_{b}(\bar{\Omega}, \mathcal{R})$ and $u \in C^{2}(\bar{\Omega})$, there is a unique solution
$w_{u} \in H^{2}(\Omega)$ to the following elliptic problem:
\begin{align}
 -\nabla.(\epsilon(u)\nabla \varphi+\epsilon_{u}\varphi \nabla u)+\nabla(\vec{\beta}(\rho h_{d}+\rho_{u} h_{d}\varphi))
 =\psi_{h} \text{ in } \Omega \nonumber \\
 \varphi=0 \text{ on } \partial \Omega.
\end{align}
and from well known elliptic regularity property we have 
\begin{align}
 \lVert \varphi \rVert_{H^{2}(\Omega)} \le C\lVert \psi_{h} \rVert 
\end{align}
Now for showing existence, uniqueness and for analyzing intermediate stage error analysis of discrete DGFVM solution we 
linearize weak formulation (35) around $\Pi_{h}u$. Let $e=u-u_{h}$ be an error term for exact and approximated DGFVM solution.
Now by subtracting $\mathscr{B}(u;u_{h},u_{h})$ from both side of equation (36), we get
\begin{align}
\mathscr{B}(u;e,u_{h})= \sum_{K \in \mathscr{R}_{h}}\sum_{j=1}^{4}\int_{A_{j+1}CA_{j}}
(\epsilon(u_{h})-\epsilon(u))\nabla u_{h}.\bold{n}\gamma v_{h} ds \nonumber \\
+\sum_{e \in \Gamma}\int_{e}[\gamma v_{h}](\epsilon(u_{h})-\epsilon(u))\nabla u_{h}ds \nonumber \\
-\sum_{K \in \mathscr{R}_{h}}\sum_{j=1}^{4}\int_{A_{j+1}CA_{j}}
(\rho(u_{h}h_d(x))-\rho(u)h_d(x))\vec{\beta}.\bold{n}\gamma v_{h} ds \nonumber \\
-\sum_{e \in \Gamma}\int_{e}[\gamma v_{h}]\Big\{\rho(u_{h}h_d(x))-\rho(u)h_d(x)\vec{\beta}.\bold{n}\Big\}ds
\end{align}
Now adding both side in above equation following term
\begin{gather}
\sum_{K \in \mathscr{R}_{h}}\sum_{j=1}^{4}\int_{A_{j+1}CA_{j}}
\epsilon_{u}(u_{h})(u_{h}-u)\nabla u.\bold{n}\gamma v_{h}ds
+\sum_{e \in \Gamma}\int_{e}[\gamma v_{h}]\Big\{\epsilon_{u}(u_{h})(u_{h}-u)\nabla u \Big\}ds \nonumber \\
-\sum_{K \in \mathscr{R}_{h}}\sum_{j=1}^{4}\int_{A_{j+1}CA_{j}}
(\rho h_{d})_{u}(u_{h})(u_{h}-u) \vec{\beta}.\bold{n}\gamma v_{h}ds
-\sum_{e \in \Gamma}\int_{e}[\gamma v_{h}]\{(\rho h_{d})_{u}(u_{h})(u_{h}-u)\}ds.
\end{gather}
Now we split error term as $$e=u-u_{h}=u-{\Pi}_{h}u+{\Pi}_{h}u-u_{h}$$
and using Taylor's formula for linearizion given in ()-() we rewrite equation (42) as
\begin{align}
 \bar{\mathscr{B}}(u;{\Pi}_{h}u-u_{h},v_{h})=\bar{\mathscr{B}}(u;{\Pi}_{h}u-u,v_{h})+\mathscr{F}(u_{h};u_{h}-u,v_{h}),
\end{align}
where
\begin{align}
 \mathscr{F}(u_{h};u_{h}-u,v_{h})=\sum_{K \in \mathscr{R}_{h}}\sum_{j=1}^{4}\int_{A_{j+1}CA_{j}}
\tilde{\epsilon}_{u}(u_{h})e\nabla e.\bold{n}\gamma v_{h}ds \nonumber \\
+\sum_{e \in \Gamma}\int_{e}[\gamma v_{h}]\Big\{\epsilon_{u}(u_{h})e\nabla e \Big\}ds
+\sum_{K \in \mathscr{R}_{h}}\sum_{j=1}^{4}\int_{A_{j+1}CA_{j}}
\tilde{\epsilon}_{uu}(u_{h})e^{2}\nabla u.\bold{n}\gamma v_{h}ds \nonumber \\
-\sum_{K \in \mathscr{R}_{h}}\sum_{j=1}^{4}\int_{A_{j+1}CA_{j}}
\tilde{(\rho h_{d})}_{uu}(u_{h})e^{2}\vec{\beta}.\bold{n}\gamma v_{h}ds
-\sum_{e \in \Gamma}\int_{e}[\gamma v_{h}]\Big\{(\rho h_{d})_{uu}(u_{h}) e^{2} \Big\}ds
\end{align}
Note that solving (35) is equivalent to solving (45).
Now for showing there exist at least one $u_{h} \in \mathscr{V}_{h}$ solution to the above equation (45) we consider a map 
$$\mathcal{S}:\mathscr{V}_{h}\rightarrow \mathscr{V}_{h}$$ defined as 
$\mathcal{S}(u_{\varphi}) =\varphi \in  \mathscr{V}_{h}, \quad \forall u_{\varphi} \in \mathscr{V}_{h}$ such that
\begin{align}
 \bar{\mathscr{B}}(u;{\Pi}_{h}u-\varphi,v_{h})=\bar{\mathscr{B}}(u;{\Pi}_{h}u-u,v_{h})+\mathscr{F}(u_{\varphi};u_{\varphi}-u,v_{h}) 
\end{align}
holds.
Consider the closed neighborhood $\mathcal{Q}_{\delta}(\Pi_{h}u)$ of the diameter $\delta >0$.
$$\mathcal{Q}_{\delta}(\Pi_{h}u)=\Big\{ u_{\varphi} \in \mathscr{V}_{h}: \vertiii{u_{\varphi}-\Pi_{h}u} \le \delta \Big\}. $$
Now we first show that $\mathcal{S}$ map closed neighborhood $\mathcal{Q}_{\delta}(\Pi_{h}u)$ into itself and then prove 
existence of DGFVM solution by exploiting Browder's fixed point theorem. The proof can be break using following lemmas.
\begin{lemma}
Let $u_{\varphi}, v_{h} \in \mathscr{V}_{h}$ also set $\chi=u_{\varphi}-\Pi_{h}u$ and $\eta=u-\Pi_{h}u$. Then there exists a constant
$C \ge 0$ (independent of $h$) such that
\begin{gather}
 |\mathscr{F}(u_{\varphi};u_{\varphi}-u,v_{h})| \le C_{\epsilon}\Big[\vertiii{\chi}^{2}
 +C_{u}(h^{5/3}+h^{1/2}+h+h^{2/3}+h^{3/2})\vertiii{\chi}\nonumber \\
 +C_{u}(h^{2}+h+h^{3/2})\vertiii{\eta} \Big]\vertiii{v_{h}}+
 C_{\rho h_{d}}\Big[\vertiii{\chi}^{2}+C_{u}(h^{5/3}+h^{3/2})\vertiii{\chi} \nonumber \\
 + C_{u}(h^{3/2}+h)\vertiii{\eta}\Big]\vertiii{v_{h}}
.
\end{gather}
\end{lemma}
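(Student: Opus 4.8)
The plan is to expand $\mathscr{F}(u_{\varphi};u_{\varphi}-u,v_h)$ into its five constituent sums (the three volume-type integrals over the $A_{j+1}CA_j$ pieces and the two edge/jump integrals over $e\in\Gamma$), insert the decomposition $e=u_{\varphi}-u=\chi-\eta$ with $\chi=u_{\varphi}-\Pi_h u$ and $\eta=u-\Pi_h u$, and expand the quadratic expressions $e\,\nabla e$ and $e^2$. This reduces the whole estimate to bounding a finite list of element-wise and edge-wise trilinear integrals $\int(\ast)(\ast\ast)\,\gamma v_h\,ds$ whose first two factors are drawn from $\{\chi,\nabla\chi,\eta,\nabla\eta,\nabla u\}$, each with a bounded prefactor $\tilde{\epsilon}_u(u_h)$, $\tilde{\epsilon}_{uu}(u_h)$ or $\widetilde{(\rho h_d)}_{uu}(u_h)$; these are $L^{\infty}$-controlled because $\tilde{\epsilon}_u\in C^1_b(\bar\Omega)$, $\tilde{\epsilon}_{uu}\in C^0_b(\bar\Omega)$ and $\rho h_d$ is smooth. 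Throughout, $\|\nabla u\|_{L^{\infty}(\Omega)}$ and $|u|_{2,\Omega}$ are finite since $u\in H^2(\Omega)\cap C^2(\bar\Omega)$, and these get folded into the constant $C_u$, while the $C^0_b/C^1_b$ bounds of the $\epsilon$- and $\rho h_d$-derivatives become $C_{\epsilon}$ and $C_{\rho h_d}$ respectively.

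The purely $\chi$-quadratic pieces — namely $\tilde{\epsilon}_u(u_h)\,\chi\,\nabla\chi$, $\tilde{\epsilon}_{uu}(u_h)\,\chi^2\,\nabla u$ and $\widetilde{(\rho h_d)}_{uu}(u_h)\,\chi^2$ — produce the $\vertiii{\chi}^2\vertiii{v_h}$ contributions. For a typical term such as $\int_{A_{j+1}CA_j}\chi\,\nabla\chi\!\cdot\!\mathbf{n}\,\gamma v_h\,ds$ I would apply H\"older on the edge, the trace inequality $\|\phi\|_e^2\le C(h_e^{-1}\|\phi\|_K^2+h_e|\phi|_{1,K}^2)$ to return to volume norms, an inverse inequality on $\mathscr{V}_h$ to control $\|\chi\|_{L^{\infty}(K)}$ (or $\|\chi\|_{L^4(K)}$), and the $L^2$-stability of $\gamma:\mathscr{V}(h)\to\mathscr{W}_h$ to dispose of $\gamma v_h$; the negative powers of $h$ from the inverse estimates cancel the positive powers from the trace scaling, leaving a clean product $|\chi|_{1,K}\,\bigl(\|\chi\|_K\text{-type}\bigr)\,\bigl(\|v_h\|\text{-type}\bigr)$, and summing over $K,e$ together with $\sum_e|\gamma v_h|_e^2\le\vertiii{v_h}^2$ yields $C\,\vertiii{\chi}^2\vertiii{v_h}$. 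The $\chi^2\nabla u$ and $\chi^2$ terms go the same way, now with $\|\nabla u\|_{\infty}$ pulled out first.

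The remaining, mixed terms carry the explicit powers of $h$: each contains a factor $\eta$ or $\nabla\eta$, to which I would apply the interpolation lemma $\|\eta\|_{s,K}\le C_A h^{2-s}|u|_{2,K}$, $s=0,1$, giving $\|\eta\|_{L^2(K)}=O(h^2)$ and $|\eta|_{1,K}=O(h)$; combined with an inverse/Sobolev estimate one also gets $\|\eta\|_{L^{\infty}(K)}=O(h)$ and, by interpolation with the $L^{\infty}$ bound, the fractional-order bounds on $\|\eta\|_{L^3}$, $\|\nabla\eta\|_{L^3}$, etc.\ that will feed the $h^{5/3}$ and $h^{2/3}$ contributions. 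Pairing these through H\"older (using in 2D the embedding $H^1\hookrightarrow L^q$ for all $q<\infty$ on the discrete factors, plus the discrete inverse inequalities on $\chi$ and $v_h$) and the trace inequality on the edge pieces, one reads off: the $\eta\,\nabla\chi$ and $\chi\,\nabla\eta$ couplings, handled via $L^3$–$L^6$–$L^2$ triples rather than the naive $L^2$–$L^{\infty}$ ones, give the $(h^{5/3}+h^{1/2}+h+h^{2/3}+h^{3/2})\vertiii{\chi}\vertiii{v_h}$ part from the $\epsilon$-group and the $(h^{5/3}+h^{3/2})\vertiii{\chi}\vertiii{v_h}$ part from the $\rho h_d$-group, while the $\eta^2\nabla u$, $\eta\,\nabla\eta$ and $\eta$-jump terms give the $(h^2+h+h^{3/2})\vertiii{\eta}\vertiii{v_h}$ and $(h^{3/2}+h)\vertiii{\eta}\vertiii{v_h}$ parts. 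Collecting everything according to whether the last free discrete factor is $\chi$ or $\eta$, and keeping the $\epsilon$- and $\rho h_d$-prefactors separate, gives exactly the stated inequality.

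The main obstacle is this last bookkeeping: for each nonlinear edge/jump integral the H\"older exponents must be chosen so that the positive interpolation powers on $\eta$ strictly dominate the negative powers produced by the inverse inequalities on $\chi$ and $v_h$ and by the $h_e^{-1}$ in the trace inequality — the presence of $h^{5/3}$ and $h^{2/3}$ in particular forces an $L^3/L^6$ splitting in those integrals — and one must check that the averaging operator $\gamma$ and the discrete trace constants are $h$-uniform so that they do not corrupt the scaling. Once the exponent accounting is fixed, the rest is routine Cauchy–Schwarz and summation over the mesh.
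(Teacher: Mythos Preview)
Your plan is correct and matches the paper's proof: the paper also splits $\zeta=\chi-\eta$, expands each of the five sums into the $\chi\chi$, $\chi\eta$, $\eta\chi$, $\eta\eta$ pieces, and bounds each via H\"older (with exactly the $L^6$--$L^3$--$L^2$ and edge $L^4$--$L^4$--$L^2$ triples you anticipate), the trace inequality, and the inverse estimate $\|v_h\|_{L^r(K)}\le Ch^{2/r-1}\|v_h\|_{L^2(K)}$, which is precisely where the $h^{2/3}$ and $h^{5/3}$ powers come from. The only mechanism you leave implicit is that the control-volume boundary integrals over $A_{j+1}CA_j$ are not bounded directly on the interior edge but are first rewritten, via the divergence theorem on the dual cells, as an element-volume pairing $\langle\,\cdot\,,\nabla v_h\rangle$ plus two remainders carrying $v_h-\gamma v_h$ (the paper's eq.~(54) and its analogues); this is the standard DG-FVM transfer device and is what makes the $h$-balancing you describe go through cleanly.
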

\begin{proof}
 Let $u_{\varphi} \in \mathscr{V}_{h}$ and take $\zeta=u_{\varphi}-u$ in equation (45) we write
 $u_{\varphi}$ in place of $u_{h}$ and $\zeta=u_{\varphi}-u$ to get
 \begin{gather}
 \mathscr{F}(u_{\varphi};\zeta,v_{h})=\sum_{K \in \mathscr{R}_{h}}\sum_{j=1}^{4}\int_{A_{j+1}CA_{j}}
\tilde{\epsilon}_{u}(u_{\varphi})\zeta\nabla \zeta.\bold{n}\gamma v_{h}ds \nonumber \\
+\sum_{e \in \Gamma}\int_{e}[\gamma v_{h}]\Big\{\epsilon_{u}(u_{\varphi})\zeta\nabla \zeta \Big\}ds
+\sum_{K \in \mathscr{R}_{h}}\sum_{j=1}^{4}\int_{A_{j+1}CA_{j}}
\tilde{\epsilon}_{uu}(u_{\varphi})\zeta^{2}\nabla u.\bold{n}\gamma v_{h}ds \nonumber \\
-\sum_{K \in \mathscr{R}_{h}}\sum_{j=1}^{4}\int_{A_{j+1}CA_{j}}
\tilde{(\rho h_{d})}_{uu}(u_{\varphi})\zeta^{2}\vec{\beta}.\bold{n}\gamma v_{h}ds
-\sum_{e \in \Gamma}\int_{e}[\gamma v_{h}]\Big\{(\rho h_{d})_{uu}(u_{\varphi}) \zeta^{2} \Big\}ds. 
 \end{gather}
Now split $\zeta=\chi-\eta$ where $\chi=u_{\varphi}-\Pi_{h}u$ and $\eta=u-\Pi_{h}u$. Then right hand side is estimated in 
following way. The First term is estimated as
\begin{gather}
 \Big|\sum_{K \in \mathscr{R}_{h}}\sum_{j=1}^{4}\int_{A_{j+1}CA_{j}}
\tilde{\epsilon}_{u}(u_{\varphi})\zeta\nabla \zeta.\bold{n}\gamma v_{h}ds \Big| \le 
 \Big|\sum_{K \in \mathscr{R}_{h}}\sum_{j=1}^{4}\int_{A_{j+1}CA_{j}}
\tilde{\epsilon}_{u}(u_{\varphi})\chi\nabla \chi.\bold{n}\gamma v_{h}ds \Big|\nonumber \\
+\Big|\sum_{K \in \mathscr{R}_{h}}\sum_{j=1}^{4}\int_{A_{j+1}CA_{j}}
\tilde{\epsilon}_{u}(u_{\varphi})\chi\nabla \eta.\bold{n}\gamma v_{h}ds\Big|+
 \Big|\sum_{K \in \mathscr{R}_{h}}\sum_{j=1}^{4}\int_{A_{j+1}CA_{j}}
\tilde{\epsilon}_{u}(u_{\varphi})\eta\nabla \chi.\bold{n}\gamma v_{h}ds\Big|\nonumber \\
+\Big|\sum_{K \in \mathscr{R}_{h}}\sum_{j=1}^{4}\int_{A_{j+1}CA_{j}}
\tilde{\epsilon}_{u}(u_{\varphi})\eta\nabla \eta.\bold{n}\gamma v_{h}ds\Big|.
\end{gather}
Second term is estimated as
\begin{gather}
\Big|\sum_{e \in \Gamma}\int_{e}[\gamma v_{h}]\Big\{\epsilon_{u}(u_{\varphi})\zeta\nabla \zeta \Big\}ds\Big|\le
\Big|\sum_{e \in \Gamma}\int_{e}[\gamma v_{h}]\Big\{\epsilon_{u}(u_{\varphi})\chi \nabla \chi \Big\}ds\Big| \nonumber \\
+\Big|\sum_{e \in \Gamma}\int_{e}[\gamma v_{h}]\Big\{\epsilon_{u}(u_{\varphi})\eta\nabla \chi \Big\}ds\Big|
+\Big|\sum_{e \in \Gamma}\int_{e}[\gamma v_{h}]\Big\{\epsilon_{u}(u_{\varphi})\chi \nabla \eta \Big\}ds\Big|\nonumber \\
+\Big|\sum_{e \in \Gamma}\int_{e}[\gamma v_{h}]\Big\{\epsilon_{u}(u_{\varphi})\eta\nabla \eta \Big\}ds\Big|.
\end{gather}
Third term is estimated as
\begin{gather}
\Big|\sum_{K \in \mathscr{R}_{h}}\sum_{j=1}^{4}\int_{A_{j+1}CA_{j}}
\tilde{\epsilon}_{uu}(u_{\varphi})\zeta^{2}\nabla u.\bold{n}\gamma v_{h}ds\Big| \le
\Big|\sum_{K \in \mathscr{R}_{h}}\sum_{j=1}^{4}\int_{A_{j+1}CA_{j}}
\tilde{\epsilon}_{uu}(u_{\varphi})\eta^{2}\nabla u.\bold{n}\gamma v_{h}ds\Big| \nonumber \\
+2\Big|\sum_{K \in \mathscr{R}_{h}}\sum_{j=1}^{4}\int_{A_{j+1}CA_{j}}
\tilde{\epsilon}_{uu}(u_{\varphi})\eta.\chi \nabla u.\bold{n}\gamma v_{h}ds\Big|+
\Big|\sum_{K \in \mathscr{R}_{h}}\sum_{j=1}^{4}\int_{A_{j+1}CA_{j}}
\tilde{\epsilon}_{uu}(u_{\varphi})\chi^{2}\nabla u.\bold{n}\gamma v_{h}ds\Big|.
\end{gather}
Fourth term is estimated as
\begin{gather}
\Big|\sum_{K \in \mathscr{R}_{h}}\sum_{j=1}^{4}\int_{A_{j+1}CA_{j}}
\tilde{(\rho h_{d})}_{uu}(u_{\varphi})\zeta^{2}\vec{\beta}.\bold{n}\gamma v_{h}ds\Big| \le
\Big|\sum_{K \in \mathscr{R}_{h}}\sum_{j=1}^{4}\int_{A_{j+1}CA_{j}}
\tilde{(\rho h_{d})}_{uu}(u_{\varphi})\chi^{2}\vec{\beta}.\bold{n}\gamma v_{h}ds\Big|\nonumber \\
+2\Big|\sum_{K \in \mathscr{R}_{h}}\sum_{j=1}^{4}\int_{A_{j+1}CA_{j}}
\tilde{(\rho h_{d})}_{uu}(u_{\varphi})\eta.\chi\vec{\beta}.\bold{n}\gamma v_{h}ds\Big|
+\Big|\sum_{K \in \mathscr{R}_{h}}\sum_{j=1}^{4}\int_{A_{j+1}CA_{j}}
\tilde{(\rho h_{d})}_{uu}(u_{\varphi})\eta^{2}\vec{\beta}.\bold{n}\gamma v_{h}ds\Big|.
\end{gather}
Fifth term is estimated as
\begin{gather}
 \Big|\sum_{e \in \Gamma}\int_{e}[\gamma v_{h}]\Big\{(\rho h_{d})_{uu}(u_{\varphi}) \zeta^{2} \Big\}ds\Big| \le
 \Big|\sum_{e \in \Gamma}\int_{e}[\gamma v_{h}]\Big\{(\rho h_{d})_{uu}(u_{\varphi}) \chi^{2} \Big\}ds\Big|\nonumber \\
 +2\Big|\sum_{e \in \Gamma}\int_{e}[\gamma v_{h}]\Big\{(\rho h_{d})_{uu}(u_{\varphi}) \eta.\chi \Big\}ds\Big|+
 \Big|\sum_{e \in \Gamma}\int_{e}[\gamma v_{h}]\Big\{(\rho h_{d})_{uu}(u_{\varphi}) \eta^{2} \Big\}ds\Big|.
\end{gather}
In equation (49) first term is estimated as
\begin{gather}
 \Big|\sum_{K \in \mathscr{R}_{h}}\sum_{j=1}^{4}\int_{A_{j+1}CA_{j}}
\tilde{\epsilon}_{u}(u_{\varphi})\chi\nabla \chi.\bold{n}\gamma v_{h}ds \Big| \le
\Big|\sum_{K}\langle {\epsilon}_{u}(u_{\varphi})\chi \nabla \chi,\nabla v_{h} \rangle \Big| \nonumber \\
+\Big|\sum_{K}\int_{\partial K}[\gamma v_{h}-v_{h}]\Big\{{\epsilon}_{u}(u_{\varphi})\chi \nabla \chi.\bold{n}\Big\}ds\Big|
+\Big|\sum_{K}\langle \nabla\Big( {\epsilon}_{u}(u_{\varphi})\chi \nabla \chi\Big), v_{h}-\gamma v_{h} \rangle\Big|.
\end{gather}
First part of equation (54) is estimated as
\begin{gather*}
\Big|\sum_{K}\langle {\epsilon}_{u}(u_{\varphi})\chi \nabla \chi,\nabla v_{h} \rangle \Big| \le
C_{\epsilon}\sum_{K}\int_{K}|\chi.\nabla \chi. \nabla v_{h}|dx.
\end{gather*}
Now using holder's inequality we get
\begin{gather}
C_{\epsilon}\sum_{K}\int_{K}|\chi.\nabla \chi. \nabla v_{h}|dx \le 
C_{\epsilon}\sum_{K}\lVert \chi \rVert_{L^{6}(K)}\lVert \chi \rVert_{L^{3}(K)}\lVert \nabla v_{h} \rVert_{L^{2}(K)} \nonumber \\
\le C_{\epsilon} \vertiii{\chi}\vertiii{\chi}\vertiii{v_{h}}.
\end{gather}
Now second part of equation (54) is estimated using Holder's inequality and trace inequality
\[\Big|\sum_{K}\int_{\partial K}[\gamma v_{h}-v_{h}]\Big\{{\epsilon}_{u}(u_{\varphi})\chi \nabla \chi.\bold{n}\Big\}ds\Big|
\le\] 
\[C_{\epsilon}\sum_{K}\Big(\int_{\partial K}[\gamma v_{h}-v_{h}]^{2}\Big)^{1/2}\lVert \chi \rVert_{L^{4}(\partial K)}
\lVert \nabla \chi \rVert_{L^{4}(\partial K)}.\]
Now using {\bf trace inequality} defined as
\begin{gather}
\lVert \nabla \chi \rVert_{L^{4}(\partial K)} \le C_{h}\Big(h^{-1}\lVert \nabla \chi \rVert^{4}_{L^{4}(K)}
+\lVert \nabla \chi \rVert^{3}_{L^{6}(K)}\lVert \nabla.\nabla \chi \rVert_{L^{2}(K)}\Big) \\
\lVert \chi \rVert_{L^{4}(\partial K)} \le C_{h}\Big(h^{-1}\lVert \chi \rVert^{4}_{L^{4}(K)}
+\lVert \chi \rVert^{3}_{L^{6}(K)}\lVert \chi \rVert_{L^{2}(K)}\Big)
\end{gather}
we get that 
\[\le C_{\epsilon}
\Big(h^{-1}\vert \gamma v_{h}-v_{h} \vert^{2}_{L^{2}(K)}+h\vert \gamma v_{h}-v_{h} \vert^{2}_{H^{1}(K)}\Big)^{1/2} \times\]
\[\Big(h^{-1}\lVert \chi \rVert^{4}_{L^{4}(K)}
+\lVert \chi \rVert^{3}_{L^{6}(K)}\lVert \chi \rVert_{L^{2}(K)}\Big)^{1/4}\]
\[\times \Big(h^{-1}\lVert \nabla \chi \rVert^{4}_{L^{4}(K)}
+\lVert \nabla \chi \rVert^{3}_{L^{6}(K)}\lVert \nabla.\nabla \chi \rVert_{L^{2}(K)}\Big)^{1/4}\]
\begin{gather}
 \le C_{\epsilon} \vertiii{\chi}\vertiii{\chi}\vertiii{v_{h}}
\end{gather}
Third term of equation (54) is estimated in similar way and it is written as
\begin{gather}
\Big|\sum_{K}\langle \nabla\Big( {\epsilon}_{u}(u_{\varphi})\chi \nabla \chi\Big), v_{h}-\gamma v_{h} \rangle\Big| \le
C_{\epsilon} \vertiii{\chi}\vertiii{\chi}\vertiii{v_{h}}.
\end{gather}
Now second term equation (49) is estimated as 
\begin{gather}
 \Big|\sum_{K \in \mathscr{R}_{h}}\sum_{j=1}^{4}\int_{A_{j+1}CA_{j}}
\tilde{\epsilon}_{u}(u_{\varphi})\chi\nabla \eta.\bold{n}\gamma v_{h}ds \Big| \le
\Big|\sum_{K}\langle {\epsilon}_{u}(u_{\varphi})\chi \nabla \eta,\nabla v_{h} \rangle \Big| \nonumber \\
+\Big|\sum_{K}\int_{\partial K}[\gamma v_{h}-v_{h}]\Big\{{\epsilon}_{u}(u_{\varphi})\chi \nabla \eta.\bold{n}\Big\}ds\Big|
+\Big|\sum_{K}\langle \nabla\Big( {\epsilon}_{u}(u_{\varphi})\chi \nabla \eta\Big), v_{h}-\gamma v_{h} \rangle\Big|.
\end{gather}
Now first term of equation (60) is estimated using Holder's inequality as
\begin{gather}
\Big|\sum_{K}\langle {\epsilon}_{u}(u_{\varphi})\chi \nabla \eta,\nabla v_{h} \rangle \Big| \le
C_{\epsilon}\sum_{K}\int_{K}|\chi.\nabla \eta. \nabla v_{h}|dx \nonumber \\
\le C_{\epsilon}\sum_{K}\Vert \chi \Vert_{L^{6}(K)} \Vert \nabla \eta \Vert_{L^{3}(K)}\Vert \nabla v_{h} \Vert_{L^{2}(K)}
\end{gather}
Now using {\bf inverse inequality} defined as
\begin{gather}
 \Vert v_{h} \Vert_{L^{r}(K)} \le C h^{2/r-1}\Vert v_{h}  \Vert_{L^{2}(K)} \quad \forall r \ge 2.
\end{gather}
and also using approximation property we get
\begin{gather}
 \le C_{\epsilon}C_{u}h^{-1/3}\Vert \nabla \eta \Vert_{L^{2}(K)}\vertiii{\chi}\vertiii{v_{h}} \nonumber \\
 \le C_{\epsilon}C_{u}h^{2/3}\Vert u \Vert_{H^{2}(\Omega)}\vertiii{\chi}\vertiii{v_{h}}.
\end{gather}
Second term of equation (60) is estimated as using Holder's inequality and trace inequality  
\begin{gather}
 \Big|\sum_{K}\int_{\partial K}[\gamma v_{h}-v_{h}]\Big\{{\epsilon}_{u}(u_{\varphi})\chi \nabla \eta.\bold{n}\Big\}ds\Big|
 \le 
 C_{\epsilon}\sum_{K}\Big( \int_{\partial K}[\gamma v_{h}-v_{h}]^{2}\Big)^{1/2}\Vert \chi \Vert_{L^{4}(\partial K)} 
 \Vert \nabla \eta \Vert_{L^{4}(\partial K)}\nonumber \\
 \le C_{\epsilon}h^{-1}
 \sum_{K}\Big(|\gamma v_{h}-v_{h}|^{2}_{L^{2}(K)}+h^{2}|\gamma v_{h}-v_{h}|^{2}_{H^{1}(K)} \Big)^{1/2} \times
 \Big(\Vert \chi \Vert^{4}_{L^{4}(K)}+h\Vert \chi \Vert^{3}_{L^{6}(K)} \Vert \nabla \chi \Vert_{L^{2}(K)} \Big)^{1/4} \nonumber \\
 \times \Big(\Vert \nabla \eta \Vert^{4}_{L^{4}(K)}+ h\Vert \nabla \eta \Vert^{3}_{L^{6}(K)}
 \Vert \nabla.\nabla \eta \Vert_{L^{2}(K)}\Big)^{1/4} \nonumber \\
 \le C_{\epsilon}h^{1/2}\Vert u \Vert_{H^{2}(\Omega)}\vertiii{v_{h}}\vertiii{\chi}.
\end{gather}
Third term of equation (60) is estimated as 
\begin{gather}
 \Big|\sum_{K}\langle \nabla\Big( {\epsilon}_{u}(u_{\varphi})\chi \nabla \eta\Big), v_{h}-\gamma v_{h} \rangle\Big|
 \le C_{\epsilon}C_{u}(h^{2/3}\Vert u \Vert_{H^{2}(K)}\vertiii{\chi}\vertiii{v_{h}} \nonumber \\
 +h^{1/2}\Vert u \Vert_{H^{2}(\Omega)}\vertiii{v_{h}}\vertiii{\chi}).
\end{gather}
Now third term of equation (49) is estimated as
\begin{gather}
 \Big|\sum_{K \in \mathscr{R}_{h}}\sum_{j=1}^{4}\int_{A_{j+1}CA_{j}}
\tilde{\epsilon}_{u}(u_{\varphi})\eta\nabla \chi.\bold{n}\gamma v_{h}ds \Big| \le
\Big|\sum_{K}\langle {\epsilon}_{u}(u_{\varphi})\eta \nabla \chi,\nabla v_{h} \rangle \Big| \nonumber \\
+\Big|\sum_{K}\int_{\partial K}[\gamma v_{h}-v_{h}]\Big\{{\epsilon}_{u}(u_{\varphi})\eta \nabla \chi.\bold{n}\Big\}ds\Big|
+\Big|\sum_{K}\langle \nabla\Big( {\epsilon}_{u}(u_{\varphi})\eta \nabla \chi\Big), v_{h}-\gamma v_{h} \rangle\Big|.
\end{gather}
First part of equation (66) is estimated by using Holder's inequality as
\begin{gather}
 \Big|\sum_{K}\langle {\epsilon}_{u}(u_{\varphi})\eta \nabla \chi,\nabla v_{h} \rangle \Big| \le 
 C_{\epsilon}\sum_{K}\Vert \eta \Vert_{L^{6}(K)}\Vert \nabla \chi \Vert_{L^{3}(K)}\Vert \nabla v_{h} \Vert_{L^{2}(K)} \nonumber \\
\le 
C_{\epsilon}\sum_{K}h^{2/6-1}\Vert \eta \Vert_{L^{2}(K)}h^{2/3-1}\Vert \nabla \chi \Vert_{L^{2}(K)}
\Vert \nabla v_{h} \Vert_{L^{2}(K)}\nonumber \\
\le C_{\epsilon}C_{u}h\Vert u \Vert_{H^{2}(\Omega)}\vertiii{\chi}\vertiii{v_{h}}.
\end{gather}
Second part of equation (66) is estimated using trace inequality we have
\begin{gather}
 \Big|\sum_{K}\int_{\partial K}[\gamma v_{h}-v_{h}]\Big\{{\epsilon}_{u}(u_{\varphi})\eta \nabla \chi.\bold{n}\Big\}ds\Big|
 \le
 C_{\epsilon}\sum_{K}\Big(\int_{\partial K}[\gamma v_{h}-v_{h}]^{2}ds\Big)^{1/2}\Vert \eta \Vert_{L^{4}(\partial K)} 
 \Vert \nabla \chi\Vert_{L^{4}(\partial K)} \nonumber \\
 \le
 C_{\epsilon}h^{3/2}\Vert u \Vert_{H^{2}(\Omega)}\vertiii{\chi}\vertiii{v_{h}}
\end{gather}
Third part of equation (66) is estimated as 
\begin{gather}
 \Big|\sum_{K}\langle \nabla\Big( {\epsilon}_{u}(u_{\varphi})\eta \nabla \chi\Big), v_{h}-\gamma v_{h} \rangle\Big|
 \le
 C_{\epsilon}C_{u}h\Vert u \Vert_{H^{2}(\Omega)}\vertiii{\chi}\vertiii{v_{h}}
 +C_{\epsilon}h^{3/2}\Vert u \Vert_{H^{2}(\Omega)}\vertiii{\chi}\vertiii{v_{h}}
\end{gather}
Fourth term of equation (49) is estimated as
\begin{gather}
 \Big|\sum_{K \in \mathscr{R}_{h}}\sum_{j=1}^{4}\int_{A_{j+1}CA_{j}}
\tilde{\epsilon}_{u}(u_{\varphi})\eta\nabla \eta.\bold{n}\gamma v_{h}ds \Big| \le
\Big|\sum_{K}\langle {\epsilon}_{u}(u_{\varphi})\eta \nabla \eta,\nabla v_{h} \rangle \Big| \nonumber \\
+\Big|\sum_{K}\int_{\partial K}[\gamma v_{h}-v_{h}]\Big\{{\epsilon}_{u}(u_{\varphi})\eta \nabla \eta.\bold{n}\Big\}ds\Big|
+\Big|\sum_{K}\langle \nabla\Big( {\epsilon}_{u}(u_{\varphi})\eta \nabla \eta\Big), v_{h}-\gamma v_{h} \rangle\Big|.
\end{gather}
First part of equation (70) is estimated using Holder's inequality as
\begin{gather}
\Big|\sum_{K}\langle {\epsilon}_{u}(u_{\varphi})\eta \nabla \eta,\nabla v_{h} \rangle \Big| 
\le
C_{\epsilon}\sum_{K}\Vert \eta \Vert_{L^{6}(K)} \Vert \nabla \eta \Vert_{L^{3}(K)}\Vert \nabla v_{h} \Vert_{L^{2}(K)} \nonumber \\
\le
C_{\epsilon}C_{u} h \Vert u \Vert_{H^{2}(\Omega)} \vertiii{\eta} \vertiii{v_{h}}
\end{gather}
Second part of equation (70) is estimated as
\begin{gather}
 \Big|\sum_{K}\int_{\partial K}[\gamma v_{h}-v_{h}]\Big\{{\epsilon}_{u}(u_{\varphi})\eta \nabla \eta.\bold{n}\Big\}ds\Big|
 \le
C_{\epsilon}\sum_{K}\Big(\int_{\partial K}[\gamma v_{h}-v_{h}]^{2}ds\Big)^{1/2}\Vert \eta \Vert_{L^{4}(\partial K)} 
 \Vert \nabla \eta \Vert_{L^{4}(\partial K)} \nonumber \\
\le C_{\epsilon}h^{3/2}\Vert u \Vert_{H^{2}(\Omega)}\vertiii{\eta}\vertiii{v_{h}}
\end{gather}
Third part of equation (70) is estimated as
\begin{gather}
\Big|\sum_{K}\langle \nabla\Big( {\epsilon}_{u}(u_{\varphi})\eta \nabla \eta\Big), v_{h}-\gamma v_{h} \rangle\Big|
\le C_{\epsilon}C_{u} h \Vert u \Vert_{H^{2}(\Omega)} \vertiii{\eta} \vertiii{v_{h}}
+C_{\epsilon}h^{3/2}\Vert u \Vert_{H^{2}(\Omega)}\vertiii{\eta}\vertiii{v_{h}}.
\end{gather}
Now first part of equation (50) is estimated as
\begin{gather}
 \Big|\sum_{e \in \Gamma}\int_{e}[\gamma v_{h}]\Big\{\epsilon_{u}(u_{\varphi})\chi \nabla \chi \Big\}ds\Big|
 \le
 C_{\epsilon}\sum_{K}\Big([\gamma v_{h}]^{2}\Big)^{1/2}
 \Vert \chi \Vert_{L^{4}(\partial K)}\Vert \nabla \chi \Vert_{L^{4}(\partial K)} \nonumber \\
 \le
 C_{\epsilon}\sum_{K}\Big([\gamma v_{h}]^{2}\Big)^{1/2}
 \Big(\Vert \chi \Vert^{4}_{L^{4}(K)}+h\Vert \chi \Vert^{3}_{L^{6}(K)} \Vert \nabla \chi \Vert_{L^{2}(K)} \Big)^{1/4}\nonumber \\
 \times \Big(\Vert \nabla \chi \Vert^{4}_{L^{4}(K)}
 +h\Vert \nabla \chi \Vert^{3}_{L^{6}(K)} \Vert \nabla .\nabla \chi \Vert_{L^{2}(K)} \Big)^{1/4} \nonumber \\
 \le C_{\epsilon}\vertiii{v_{h}}\vertiii{\chi}\vertiii{\chi}
\end{gather}
In similar way we can show that second, third and fourth part of equation (50) is estimated as
\begin{gather}
\Big|\sum_{e \in \Gamma}\int_{e}[\gamma v_{h}]\Big\{\epsilon_{u}(u_{\varphi})\chi \nabla \eta \Big\}ds\Big|
\le 
C_{\epsilon}C_{u}h^{1/2}\Vert u \Vert_{H^{2}(\Omega)}\vertiii{v_{h}}\vertiii{\chi} \\
\Big|\sum_{e \in \Gamma}\int_{e}[\gamma v_{h}]\Big\{\epsilon_{u}(u_{\varphi})\eta \nabla \chi \Big\}ds\Big|
\le 
C_{\epsilon}C_{u}h^{3/2}\Vert u \Vert_{H^{2}(\Omega)}\vertiii{v_{h}}\vertiii{\chi} \\
\Big|\sum_{e \in \Gamma}\int_{e}[\gamma v_{h}]\Big\{\epsilon_{u}(u_{\varphi})\eta \nabla \eta \Big\}ds\Big|
\le 
C_{\epsilon}C_{u}h^{3/2}\Vert u \Vert_{H^{2}(\Omega)}\vertiii{v_{h}}\vertiii{\eta}.
\end{gather}
First part of equation (51) is estimated using similar argument as
\begin{gather}
 \Big|\sum_{K \in \mathscr{R}_{h}}\sum_{j=1}^{4}\int_{A_{j+1}CA_{j}}
\tilde{\epsilon}_{uu}(u_{\varphi})\chi^{2}\nabla u.\bold{n}\gamma v_{h}ds\Big| \le
C_{\epsilon}C_{u}\vertiii{\chi}^{2}\vertiii{v_{h}}
\end{gather}
Second part of equation (51) is estimated using similar argument as
\begin{gather}
 \Big|\sum_{K \in \mathscr{R}_{h}}\sum_{j=1}^{4}\int_{A_{j+1}CA_{j}}
\tilde{\epsilon}_{uu}(u_{\varphi})\chi.\eta\nabla u.\bold{n}\gamma v_{h}ds\Big| \le
C_{\epsilon}C_{u}\Big(h^{5/3}\Vert u \Vert_{H^{2}(\Omega)}\vertiii{\chi}\vertiii{v_{h}}+ \nonumber \\
h^{3/2}\Vert u \Vert_{H^{2}(\Omega)}\vertiii{\chi}\vertiii{v_{h}}\Big)
\end{gather}
Third part of equation (51) is estimated using similar argument as
\begin{gather}
 \Big|\sum_{K \in \mathscr{R}_{h}}\sum_{j=1}^{4}\int_{A_{j+1}CA_{j}}
\tilde{\epsilon}_{uu}(u_{\varphi})\eta^{2}\nabla u.\bold{n}\gamma v_{h}ds\Big| \le
C_{\epsilon}C_{u}\Big(h^{2}\Vert u \Vert_{H^{2}(\Omega)}\vertiii{\eta}\vertiii{v_{h}} \nonumber \\
+h^{3/2}\Vert u \Vert_{H^{2}(\Omega)}\vertiii{\eta}\vertiii{v_{h}}\Big).
\end{gather}
First part of equation (52) is estimated as
\begin{gather}
\Big|\sum_{K \in \mathscr{R}_{h}}\sum_{j=1}^{4}\int_{A_{j+1}CA_{j}}
\tilde{(\rho h_{d})}_{uu}(u_{\varphi})\chi^{2}\vec{\beta}.\bold{n}\gamma v_{h}ds\Big|\le
C_{\rho h_{d}}\vertiii{v_{h}}\vertiii{\chi}^{2}.
\end{gather}
Second part of equation (52) is estimated as
\begin{gather}
\Big|\sum_{K \in \mathscr{R}_{h}}\sum_{j=1}^{4}\int_{A_{j+1}CA_{j}}
\tilde{(\rho h_{d})}_{uu}(u_{\varphi})\eta.\chi\vec{\beta}.\bold{n}\gamma v_{h}ds\Big|\le
C_{\rho h_{d}}\Big(h^{5/3}\Vert u \Vert_{H^{2}(\Omega)}\vertiii{\chi}\vertiii{v_{h}} \nonumber \\
+h^{3/2}\Vert u \Vert_{H^{2}(\Omega)}\vertiii{\chi}\vertiii{v_{h}}\Big).
\end{gather}
Third part of equation (52) is estimated as
\begin{gather}
 \Big|\sum_{K \in \mathscr{R}_{h}}\sum_{j=1}^{4}\int_{A_{j+1}CA_{j}}
\tilde{(\rho h_{d})}_{uu}(u_{\varphi})\eta^{2}\vec{\beta}.\bold{n}\gamma v_{h}ds\Big|\le
C_{\rho h_{d}}h\Vert u \Vert_{H^{2}(\Omega)}\vertiii{\eta}\vertiii{v_{h}} \nonumber \\
+C_{\rho h_{d}}h^{3/2}\Vert u \Vert_{H^{2}(\Omega)}\vertiii{\eta}\vertiii{v_{h}}
\end{gather}
Now equation (53) is estimated using similar argument as
\begin{gather}
 \Big|\sum_{e \in \Gamma}\int_{e}[\gamma v_{h}]\Big\{(\rho h_{d})_{uu}(u_{\varphi}) \zeta^{2} \Big\}ds\Big| \le
 \Big|\sum_{e \in \Gamma}\int_{e}[\gamma v_{h}]\Big\{(\rho h_{d})_{uu}(u_{\varphi}) \chi^{2} \Big\}ds\Big|\nonumber \\
 +2\Big|\sum_{e \in \Gamma}\int_{e}[\gamma v_{h}]\Big\{(\rho h_{d})_{uu}(u_{\varphi}) \eta.\chi \Big\}ds\Big|+
 \Big|\sum_{e \in \Gamma}\int_{e}[\gamma v_{h}]\Big\{(\rho h_{d})_{uu}(u_{\varphi}) \eta^{2} \Big\}ds\Big|\nonumber \\
 \le
 C_{\rho h_{d}}\Big(\vertiii{v_{h}}\vertiii{\chi}^{2}
 +h^{3/2}\Vert u \Vert_{H^{2}(\Omega)}\vertiii{\chi}\vertiii{v_{h}}
 +h^{3/2}\Vert u \Vert_{H^{2}(\Omega)}\vertiii{\eta}\vertiii{v_{h}}\Big).
\end{gather}

\end{proof}
Now we are interested in deriving upper bound of $\vertiii{\Pi_{h}u-\varphi}$ and it is explained in next lemma.
\begin{lemma}
Let $u_{\varphi} \in \mathscr{V}_{h}$ and take $\varphi=\mathcal{S}u_{\varphi}$.Then there exist a positive constant $C$
(independent of $h$) such that
\begin{gather}
 \vertiii{\Pi_{h}u-\varphi} \le 
 C_{\epsilon}\Big[\vertiii{\Pi_{h}u-u_{\varphi}}^{2}
 +C_{u}\Big(h^{5/3}+h^{1/2}+h^{2/3}+h(1+h^{1/2})\Big) \nonumber \\
 \vertiii{\Pi_{h}u-u_{\varphi}}
 +C_{u}(h^{2}+h+h^{3/2})\vertiii{\eta} \Big]+
 C_{\rho h_{d}}\Big[\vertiii{\Pi_{h}u-u_{\varphi}}^{2}+C_{u}(h^{5/3}+h^{3/2}) \nonumber \\
 \vertiii{\Pi_{h}u-u_{\varphi}} + C_{u}(h^{3/2}+h)\vertiii{\eta}\Big]+C\vertiii{\eta}.
\end{gather}
holds.
\end{lemma}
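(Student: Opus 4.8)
The plan is to test the characterization of $\varphi=\mathcal{S}u_{\varphi}$ against the quantity we wish to estimate and then play coercivity of $\bar{\mathscr{B}}$ against the bound on $\mathscr{F}$ obtained in the previous lemma. By definition of the map $\mathcal{S}$,
\begin{align*}
\bar{\mathscr{B}}(u;\Pi_{h}u-\varphi,v_{h})=\bar{\mathscr{B}}(u;\Pi_{h}u-u,v_{h})+\mathscr{F}(u_{\varphi};u_{\varphi}-u,v_{h}),\qquad\forall v_{h}\in\mathscr{V}_{h}.
\end{align*}
If $\vertiii{\Pi_{h}u-\varphi}=0$ there is nothing to prove, so assume it is positive and insert the admissible test function $v_{h}=\Pi_{h}u-\varphi\in\mathscr{V}_{h}$.

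First I would bound the left-hand side from below by coercivity of the linearized form: for $\alpha_{1}$ large enough and $h$ small enough there is a constant $C_{0}>0$, independent of $h$, with $\bar{\mathscr{B}}(u;w_{h},w_{h})\ge C_{0}\vertiii{w_{h}}^{2}$ for all $w_{h}\in\mathscr{V}_{h}$; applied to $w_{h}=\Pi_{h}u-\varphi$ this gives $\bar{\mathscr{B}}(u;\Pi_{h}u-\varphi,\Pi_{h}u-\varphi)\ge C_{0}\vertiii{\Pi_{h}u-\varphi}^{2}$. This step is where the real work lies: $\bar{\mathscr{B}}$ is $\mathscr{B}$ (whose coercivity is already recorded) plus the linearization terms carrying $\epsilon_{u}(w_{u})\nabla w_{u}$ and $\rho_{u}h_{d}$, and one must check that on the diagonal $w=v=w_{h}$ these extra contributions are of order $h$, or absorbable by a small fraction of $\vertiii{w_{h}}^{2}$ together with the jump penalty $\alpha_{1}\sum_{e}[\gamma w_{h}]_{e}^{2}$, using the $H^{2}$/$L^{\infty}$ regularity of $u$ established above, the trace and inverse inequalities, and the light-load smallness of $C_{\epsilon}$ and $C_{\rho h_{d}}$.

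Next I would estimate the two right-hand terms. For the first, continuity of $\bar{\mathscr{B}}(u;\cdot,\cdot)$ in the $\vertiii{\cdot}$-norm (obtained, exactly as for $\mathscr{T}$, from H\"older's inequality, the trace inequality and boundedness of the coefficients) yields $|\bar{\mathscr{B}}(u;\Pi_{h}u-u,\Pi_{h}u-\varphi)|\le C\vertiii{\Pi_{h}u-u}\vertiii{\Pi_{h}u-\varphi}=C\vertiii{\eta}\vertiii{\Pi_{h}u-\varphi}$, which is the source of the final additive term $C\vertiii{\eta}$. For the second, I would apply the preceding lemma verbatim with the test function $v_{h}=\Pi_{h}u-\varphi$ and with $\chi=u_{\varphi}-\Pi_{h}u$ (so that $\vertiii{\chi}=\vertiii{\Pi_{h}u-u_{\varphi}}$ and $\zeta=u_{\varphi}-u=\chi-\eta$), which bounds $|\mathscr{F}(u_{\varphi};u_{\varphi}-u,\Pi_{h}u-\varphi)|$ by the bracketed expression in $C_{\epsilon}$ and $C_{\rho h_{d}}$ times $\vertiii{\Pi_{h}u-\varphi}$.

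Finally I would add the three inequalities, divide through by the positive factor $\vertiii{\Pi_{h}u-\varphi}$, and rename $C/C_{0}$ and similar ratios as the constants appearing in the statement; this reproduces precisely the asserted estimate. As noted, the only delicate ingredient is the uniform coercivity constant $C_{0}$ for $\bar{\mathscr{B}}$ (i.e.\ showing that the linearization correction terms do not destroy the coercivity already available for $\mathscr{T}$); every other step is a reuse of inequalities already in hand, so the powers of $h$ in the conclusion are inherited directly from the previous lemma.
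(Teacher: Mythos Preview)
Your argument is correct and follows essentially the same route as the paper: test the defining identity of $\mathcal{S}$ with $v_{h}=\Pi_{h}u-\varphi$, bound $\bar{\mathscr{B}}(u;\eta,\Pi_{h}u-\varphi)$ by continuity, bound $\mathscr{F}$ by the preceding lemma, invoke coercivity of $\bar{\mathscr{B}}$ on the left, and cancel one factor of $\vertiii{\Pi_{h}u-\varphi}$. The only cosmetic difference is that the paper records the continuity estimate in the slightly stronger norm $\vertiii{\eta}_{\nu}$ rather than $\vertiii{\eta}$, and simply asserts the coercivity of $\bar{\mathscr{B}}$ without the justification you sketch.
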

\begin{proof}
In equation (46) we redefine the term $\chi=\Pi_{h}u-u_{\varphi}$, $\eta=\Pi_{h}u-u$ , and $\vartheta=\Pi_{h}u-\varphi$.
Now consider the first term in the right hand side of equation (46) and replace $v_{h}=\vartheta$ and use the boundedness 
property of the operator to get
\begin{align}
 \Big| \bar{\mathscr{B}}(u;\eta,\vartheta)\Big|\le C\vertiii{\eta}_{\nu}\vertiii{\vartheta}
\end{align}
Also by replacing $v_{h}=\vartheta$ in previous lemma 3.8 we obtain
\begin{gather}
 \Big|\mathscr{F}(u_{\varphi};u_{\varphi}-u,\vartheta)  \Big| \le 
 C_{\epsilon}\Big[\vertiii{\chi}^{2}
 +C_{u}\Big(h^{5/3}+h^{1/2}+h^{2/3}+h(1+h^{1/2})\Big) \nonumber \\
 \vertiii{\chi}
 +C_{u}(h^{2}+h+h^{3/2})\vertiii{\eta} \Big]\vertiii{\vartheta}+
 C_{\rho h_{d}}\Big[\vertiii{\chi}^{2}+C_{u}(h^{5/3}+h^{3/2}) \nonumber \\
 \vertiii{\chi} + C_{u}(h^{3/2}+h)\vertiii{\eta}\Big]\vertiii{\vartheta}.
\end{gather}
Now putting the value of equation (86) and (87) in equation (46) we get
\begin{gather}
 \bar{\mathscr{B}}(u;\vartheta,\vartheta) \le
 C_{\epsilon}\Big[\vertiii{\chi}^{2}
 +C_{u}\Big(h^{5/3}+h^{1/2}+h^{2/3}+h(1+h^{1/2})\Big) \nonumber \\
 \vertiii{\chi}
 +C_{u}(h^{2}+h+h^{3/2})\vertiii{\eta} \Big]\vertiii{\vartheta}+
 C_{\rho h_{d}}\Big[\vertiii{\chi}^{2}+C_{u}(h^{5/3}+h^{3/2}) \nonumber \\
 \vertiii{\chi} + C_{u}(h^{3/2}+h)\vertiii{\eta}\Big]\vertiii{\vartheta}
 +C\vertiii{\eta}_{\nu}\vertiii{\vartheta}.
\end{gather}
Now using coercive property we obtain
\begin{gather}
 \vertiii{\vartheta}^{2} \le 
 C_{\epsilon}\Big[\vertiii{\chi}^{2}
 +C_{u}\Big(h^{5/3}+h^{1/2}+h^{2/3}+h(1+h^{1/2})\Big) \nonumber \\
 \vertiii{\chi}
 +C_{u}(h^{2}+h+h^{3/2})\vertiii{\eta} \Big]\vertiii{\vartheta}+
 C_{\rho h_{d}}\Big[\vertiii{\chi}^{2}+C_{u}(h^{5/3}+h^{3/2}) \nonumber \\
 \vertiii{\chi} + C_{u}(h^{3/2}+h)\vertiii{\eta}\Big]\vertiii{\vartheta}
 +C\vertiii{\eta}_{\nu}\vertiii{\vartheta}. 
\end{gather}
Now eliminating $\vartheta$ from both sides we get the desire result.
\end{proof}
\begin{theorem}
 For sufficiently small $h$ there is a $\delta > 0$ such that the map $\mathcal{S}$ maps $\mathcal{Q}_{\delta}(\Pi_{h}u)$ into itself.
\end{theorem}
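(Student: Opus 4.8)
\emph{Proof idea.} Fix $u_\varphi\in\mathcal{Q}_\delta(\Pi_h u)$ and set $\chi=\Pi_h u-u_\varphi$, $\eta=u-\Pi_h u$, $\vartheta=\Pi_h u-\varphi$ with $\varphi=\mathcal{S}u_\varphi$; by hypothesis $\vertiii{\chi}\le\delta$, and since $\vertiii{\vartheta}=\vertiii{\varphi-\Pi_h u}$, the assertion $\mathcal{S}\big(\mathcal{Q}_\delta(\Pi_h u)\big)\subset\mathcal{Q}_\delta(\Pi_h u)$ is exactly the estimate $\vertiii{\vartheta}\le\delta$. First note that $\mathcal{S}$ is well defined: for fixed $u_\varphi$ the defining identity (46) is a linear problem for $\varphi\in\mathscr{V}_h$ driven by the coercive form $\bar{\mathscr{B}}(u;\cdot,\cdot)$, so $\varphi$ exists and is unique.

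The plan is to feed the preceding lemma (the bound on $\vertiii{\Pi_h u-\varphi}$) the information $\vertiii{\chi}\le\delta$ and then choose $\delta$. By the interpolation estimate (Lemma 3.1) together with the trace inequality one has $\vertiii{\eta}\le C_\eta h\Vert u\Vert_{H^2(\Omega)}$ and likewise $\vertiii{\eta}_\nu=O(h)$; moreover every exponent of $h$ occurring in that lemma is $\ge\tfrac12$, so for $h\le 1$ the factors $h^{5/3},h^{2/3},h^{3/2},h,h^2$ are all dominated by $h^{1/2}$. Collecting terms, the lemma gives a bound of the shape
\begin{equation*}
\vertiii{\vartheta}\ \le\ C_{0}\,\delta^{2}\ +\ C_{1}\,h^{1/2}\delta\ +\ C_{2}\,h ,
\end{equation*}
with $C_{0}=C_{\epsilon}+C_{\rho h_{d}}$ and $C_{1},C_{2}$ absorbing $C_{\epsilon},C_{\rho h_{d}},C_{u},C_{\eta}$ and $\Vert u\Vert_{H^{2}(\Omega)}$. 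Under the light-load restriction the uniform $C^{1}_{b}$/$C^{0}_{b}$ bounds on $\tilde\epsilon_{u},\tilde\epsilon_{uu},(\rho h_{d})_{uu}$ hold irrespective of which $u_\varphi\in\mathcal{Q}_\delta(\Pi_h u)$ has been selected, so $C_{0},C_{1},C_{2}$ are independent of both $h$ and $\delta$.

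It remains to pick $\delta$. Take $\delta:=2C_{2}h$. Then $\vertiii{\vartheta}\le\delta$ follows from $4C_{0}C_{2}^{2}h^{2}+2C_{1}C_{2}h^{3/2}+C_{2}h\le 2C_{2}h$, i.e.\ from $4C_{0}C_{2}h+2C_{1}h^{1/2}\le 1$, which holds for all $h$ below a threshold $h_{0}$ depending only on $C_{0}$ and $C_{1}$. Hence for $h\le h_{0}$ the map $\mathcal{S}$ sends $\mathcal{Q}_{\delta}(\Pi_h u)$ into itself with $\delta=2C_{2}h$, an $O(h)$ scaling that is precisely what will later produce the optimal $H^{1}$ rate. (Alternatively one may first fix $\delta$ so small that $C_{0}\delta\le\tfrac12$ and then shrink $h$; either route proves the statement, the advantage of the first being the explicit $\delta\asymp h$.)

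The substantive work — the term-by-term estimates of $\mathscr{F}$ and of $\vertiii{\Pi_h u-\varphi}$ — is already contained in the two preceding lemmas, so within this theorem there is no genuine analytic obstacle; the only point requiring care is structural. The quadratic term $C_{0}\delta^{2}$ forbids choosing $\delta$ freely: it must be tied to $h$ (or kept small enough to be reabsorbed into the linear $\delta$ on the left), and one must verify that $C_{0},C_{1},C_{2}$ do not secretly depend on $h$ — which is exactly the place where the light-load parameter condition is invoked.
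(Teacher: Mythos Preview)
Your argument is correct and follows the same overall strategy as the paper: invoke Lemma~3.9, bound $\vertiii{\chi}\le\delta$ and $\vertiii{\eta}=O(h)$, and then choose $\delta$ as a power of $h$ so that the resulting coefficient in front of $\delta$ drops below $1$ for small $h$.

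The one genuine difference is the scaling of $\delta$. You take $\delta=2C_{2}h$, i.e.\ $\delta\asymp h$; the paper instead sets $\delta=h^{-\delta_{0}}\vertiii{\Pi_{h}u-u}\asymp h^{1-\delta_{0}}$ with $0<\delta_{0}\le\tfrac14$. With that choice the paper rewrites $\delta^{2}\le h^{1-\delta_{0}}C\,\delta$ and $\vertiii{\eta}=h^{\delta_{0}}\delta$, so every term on the right of Lemma~3.9 becomes (positive power of $h$)$\times\delta$, and one concludes by making the bracket $<1$. Your route is tighter and more transparent; the paper's buys a free parameter $\delta_{0}$ that is reused verbatim in the next theorem (the contraction estimate $\vertiii{\mathcal{S}u_{\varphi_{1}}-\mathcal{S}u_{\varphi_{2}}}\le C_{u}Ch^{\delta_{0}}\vertiii{u_{\varphi_{1}}-u_{\varphi_{2}}}$), which is why they carry it. Either choice suffices for the present statement.
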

\begin{proof}
 Let $u_{\varphi} \in \mathcal{Q}(\Pi_{h}u)$ and consider an element $y$ such that $y=\mathcal{S}u_{\varphi}$. Furthermore,
 choose $\delta=h^{-\delta_{0}}\vertiii{\Pi_{h}u-u}$, where $0< \delta_{0} \le 1/4$. Then we get
 \begin{gather}
  \vertiii{\Pi_{h}u-u_{\varphi}}^{2} \le \delta^{2} \nonumber \\
 \vertiii{\Pi_{h}u-u_{\varphi}}^{2} \le h^{-\delta_{0}}\vertiii{\Pi_{h}u-u} \delta \nonumber \\
  \vertiii{\Pi_{h}u-u_{\varphi}}^{2} \le h^{1-\delta_{0}}C\Vert u\Vert_{H^{2}(\Omega)} \delta \nonumber \\
  \vertiii{\Pi_{h}u-u_{\varphi}}^{2} \le h^{1-\delta_{0}}C'_{u}C_{1}\delta.
 \end{gather}
From lemma 3.9 and equation (90) we get
\begin{gather}
 \vertiii{\Pi_{h}u-\varphi} \le
\Big[(C_{\epsilon}+C_{\rho h_{d}})h^{1-\delta_{0}}C'_{u}C_{1}\delta+\Big(C_{\epsilon}C_{u}(h^{1/2}+h^{2/3}+h)+  
+C_{u}(C_{\rho h_{d}}\nonumber \\
+C_{\epsilon})(h^{5/3}+h^{3/2})\Big)h^{\delta_{0}}\delta
+\Big(C_{u}(C_{\rho h_{d}}+C_{\epsilon})(h+h^{3/2})+C_{\epsilon}C_{u}h^{2}+C\Big)h^{\delta_{0}}\delta\Big].
\end{gather}
Now choosing $h$ small enough so that
\begin{gather}
\Big[(C_{\epsilon}+C_{\rho h_{d}})h^{1-\delta_{0}}C'_{u}C_{1}+\Big(C_{\epsilon}C_{u}(h^{1/2}+h^{2/3}+h)+  
+C_{u}(C_{\rho h_{d}}+C_{\epsilon})(h^{5/3}\nonumber \\
+h^{3/2})\Big)h^{\delta_{0}}
+\Big(C_{u}(C_{\rho h_{d}}+C_{\epsilon})(h+h^{3/2})+C_{\epsilon}C_{u}h^{2}+C\Big)h^{\delta_{0}}\Big]
< 1
\end{gather}
and so $\mathcal{S}$ maps $\mathcal{Q}_{\delta}(\Pi_{h}u)$ into itself.
\end{proof}
\begin{theorem}
Let $\delta > 0$ and assume that $u_{\varphi_{1}},u_{\varphi_{2}} \in \mathcal{Q}_{\delta}(\Pi_{h}u)$, then there exists a positive
constant $C$ such that the following condition holds for given $ 0 \le \delta_{0} \le 1/4 $
\begin{align}
\vertiii{\mathcal{S}u_{\varphi_{1}}-\mathcal{S}u_{\varphi_{2}}} \le C_{u}Ch^{\delta_{0}}\vertiii{u_{\varphi_{1}}-u_{\varphi_{2}}}. 
\end{align}
\end{theorem}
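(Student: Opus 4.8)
\emph{Proof strategy.} The plan is to show that $\mathcal{S}$ is a Lipschitz map on $\mathcal{Q}_{\delta}(\Pi_h u)$ with a small constant, by subtracting the defining identities for the two images, testing with their difference, using coercivity on one side and the estimates of Lemma 3.8 on the other. Write $\varphi_i:=\mathcal{S}u_{\varphi_i}$ and $e_i:=u_{\varphi_i}-u=\chi_i-\eta$ with $\chi_i:=u_{\varphi_i}-\Pi_h u$, $\eta:=u-\Pi_h u$, for $i=1,2$; each $\varphi_i\in\mathscr{V}_h$ satisfies the identity defining $\mathcal{S}$. Subtracting the two identities and using that $\bar{\mathscr{B}}(u;\cdot,v_h)$ is linear in its middle slot gives
\[ \bar{\mathscr{B}}(u;\varphi_2-\varphi_1,v_h)=\mathscr{F}(u_{\varphi_1};e_1,v_h)-\mathscr{F}(u_{\varphi_2};e_2,v_h)\qquad\forall v_h\in\mathscr{V}_h. \]
Choosing $v_h=\varphi_2-\varphi_1\in\mathscr{V}_h$ and applying the coercivity of $\bar{\mathscr{B}}$ (the property already used in the proof of Lemma 3.9, inherited from the coercivity estimate for $\mathscr{T}$ once $\alpha_1$ is large and $h$ is small) reduces the theorem to the bound
\[ C\,\vertiii{\varphi_1-\varphi_2}^2\le\big|\mathscr{F}(u_{\varphi_1};e_1,\varphi_2-\varphi_1)-\mathscr{F}(u_{\varphi_2};e_2,\varphi_2-\varphi_1)\big|, \]
i.e.\ to a Lipschitz-in-$u_\varphi$ estimate for $u_\varphi\mapsto\mathscr{F}(u_\varphi;u_\varphi-u,\cdot)$.

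Next I would bound this difference term by term along the five groups appearing in $\mathscr{F}$. For each group I split the difference of products into a \emph{coefficient part}, e.g.\ $\big(\tilde{\epsilon}_u(u_{\varphi_1})-\tilde{\epsilon}_u(u_{\varphi_2})\big)e_1\nabla e_1$, and an \emph{argument part}, written through the elementary identities $e_1\nabla e_1-e_2\nabla e_2=(e_1-e_2)\nabla e_1+e_2\nabla(e_1-e_2)$ and $e_1^2-e_2^2=(e_1-e_2)(e_1+e_2)$. The key observations are that $e_1-e_2=u_{\varphi_1}-u_{\varphi_2}\in\mathscr{V}_h$, and that $\tilde{\epsilon}_u,\tilde{\epsilon}_{uu},(\rho h_d)_{uu}$ are Lipschitz in their frozen argument — a consequence of the boundedness of the derivatives of $\epsilon,\rho,h_d$ recorded in Appendix \ref{appendix:pvalue} — so each coefficient difference is dominated pointwise by $C|u_{\varphi_1}-u_{\varphi_2}|$. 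After this, every resulting piece has exactly the algebraic shape handled in Lemma 3.8, and one applies the same Hölder, inverse and trace inequalities used there, pulling out one factor $\vertiii{u_{\varphi_1}-u_{\varphi_2}}$ and one factor $\vertiii{\varphi_1-\varphi_2}$; the leftover factor is a sum of terms $\vertiii{\chi_i}$, $\vertiii{\eta}$, or one of the powers $h^{1/2},h^{2/3},h,h^{3/2},h^{5/3}$ (possibly with no $\vertiii{\chi_i}$, $\vertiii{\eta}$ factor at all).

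To conclude I would invoke, exactly as in the proof of Theorem 3.10, that $u_{\varphi_i}\in\mathcal{Q}_{\delta}(\Pi_h u)$ with $\delta=h^{-\delta_0}\vertiii{\Pi_h u-u}\le C'_u C_1 h^{1-\delta_0}$, whence $\vertiii{\chi_i}\le\delta=O(h^{1-\delta_0})$ and $\vertiii{\eta}\le Ch\Vert u\Vert_{H^2(\Omega)}$. Since $0\le\delta_0\le1/4$ gives $1-\delta_0\ge\delta_0$ and, for $h<1$, each of $h^{1/2},h^{2/3},h,h^{3/2},h^{5/3}$ is $\le h^{\delta_0}$, the leftover factor is $\le C_u C h^{\delta_0}$. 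Dividing the coercivity inequality by $\vertiii{\varphi_1-\varphi_2}$ then yields $\vertiii{\mathcal{S}u_{\varphi_1}-\mathcal{S}u_{\varphi_2}}\le C_u C h^{\delta_0}\vertiii{u_{\varphi_1}-u_{\varphi_2}}$. Together with Theorem 3.10 and the choice of $h$ so small that $C_u C h^{\delta_0}<1$, this makes $\mathcal{S}$ a contraction of the complete set $\mathcal{Q}_{\delta}(\Pi_h u)$ into itself, so Banach's fixed point theorem delivers a unique fixed point $u_h$, i.e.\ well-posedness of the discrete DG-FVM problem.

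The main obstacle is the coefficient-difference piece of the first (and its jump) group, $\big(\tilde{\epsilon}_u(u_{\varphi_1})-\tilde{\epsilon}_u(u_{\varphi_2})\big)e_1\nabla e_1$: this is a product of four $\mathscr{V}_h$-functions (the coefficient difference, $e_1$, $\nabla e_1$ and $\gamma v_h$ or its jump), so one must convert the extra $L^p$-norm of $u_{\varphi_1}-u_{\varphi_2}$ into $\vertiii{u_{\varphi_1}-u_{\varphi_2}}$ via the inverse inequality while keeping the accumulated negative powers of $h$ strictly below $h^{\delta_0}$, and at the same time control the edge contributions by the trace inequalities without upsetting this balance. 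Verifying the Lipschitz continuity of $\tilde{\epsilon}_u,\tilde{\epsilon}_{uu},(\rho h_d)_{uu}$ (which needs slightly more smoothness of $\epsilon,\rho$ than the $C^2_b$ hypothesis literally states, but holds for the exponential pressure–viscosity and density laws of Appendix \ref{appendix:pvalue}) is the only ingredient not already contained in Lemma 3.8.
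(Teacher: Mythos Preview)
Your overall scheme coincides with the paper's: subtract the defining identities for $\varphi_i=\mathcal{S}u_{\varphi_i}$ to obtain
\[
\bar{\mathscr{B}}(u;\varphi_1-\varphi_2,v_h)=\mathscr{F}(u_{\varphi_1};u_{\varphi_1}-u,v_h)-\mathscr{F}(u_{\varphi_2};u_{\varphi_2}-u,v_h),
\]
test with $v_h=\varphi_1-\varphi_2$, apply coercivity, and bound the right-hand side via the Lemma~3.8 machinery together with $\vertiii{\chi_i}\le\delta=h^{-\delta_0}\vertiii{\eta}$.

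The difference lies in how the right-hand side is decomposed. You split each term into a coefficient-difference piece $\big(\tilde{\epsilon}_u(u_{\varphi_1})-\tilde{\epsilon}_u(u_{\varphi_2})\big)\zeta_1\nabla\zeta_1$ and an argument-difference piece, invoking pointwise Lipschitz continuity of $\tilde{\epsilon}_u,\tilde{\epsilon}_{uu},(\rho h_d)_{uu}$ for the first; this, as you correctly flag, produces a four-factor product and needs slightly more smoothness of $\epsilon,\rho$ than $C^2_b$. The paper instead exploits that the integral Taylor remainders are \emph{exact}: by definition $\tilde{\epsilon}_u(u_{\varphi_i})(u_{\varphi_i}-u)=\epsilon(u_{\varphi_i})-\epsilon(u)$, so the difference of the combined factor $\tilde{\epsilon}_u(u_{\varphi_i})\zeta_i$ telescopes to
\[
\epsilon(u_{\varphi_1})-\epsilon(u_{\varphi_2})=\tilde{R}_{\epsilon_1}(u_{\varphi_1},u_{\varphi_2})\,(u_{\varphi_1}-u_{\varphi_2}),
\]
and an analogous identity handles the second-order pieces. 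This collapses one factor, so every resulting term is already of the three-factor shape treated in Lemma~3.8; the four-factor obstacle never arises and no regularity beyond $\epsilon\in C^2_b$ is needed. Your route reaches the same conclusion but at the cost of the extra bookkeeping and the mild additional smoothness you describe; the paper's Taylor telescoping is worth noting as the cleaner device.
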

\begin{proof}
Consider $\delta=h^{-\delta_{0}}\vertiii{\eta}$ for some $0 \le \delta_{0} \le 1/4$, where $\eta = \Pi_{h}u-u$.
Take $\varphi_{1}=\mathcal{S}u_{\varphi_{1}}$ and $\varphi_{2}=\mathcal{S}u_{\varphi_{2}}$. Then, we have 
\begin{align}
 \bar{\mathscr{B}}(u;\varphi_{1}-\varphi_{2},v_{h})=\mathscr{F}(u_{\varphi_{1}};u_{\varphi_{1}}-u,v_{h})
 -\mathscr{F}(u_{\varphi_{2}};u_{\varphi_{2}}-u,v_{h}).
\end{align}
For proving condition (93), we first evaluate an upper bound of equation (94) as
\begin{gather}
 \Big|\mathscr{F}(u_{\varphi_{1}};u_{\varphi_{1}}-u,v_{h})
 -\mathscr{F}(u_{\varphi_{2}};u_{\varphi_{2}}-u,v_{h}) \Big| \le \nonumber \\
\Big|\sum_{K \in \mathscr{R}_{h}}\sum_{j=1}^{4}\int_{A_{j+1}CA_{j}}
(\tilde{\epsilon}_{u}(u_{\varphi_{1}})\zeta_{1}\nabla \zeta_{1}-\tilde{\epsilon}_{u}(u_{\varphi_{2}})\zeta_{2}\nabla \zeta_{2})
.\bold{n}\gamma v_{h}ds \Big|\nonumber \\
+\Big|\sum_{e \in \Gamma}\int_{e}[\gamma v_{h}]\Big\{\epsilon_{u}(u_{\varphi_{1}})\zeta_{1}\nabla \zeta_{1}
-\epsilon_{u}(u_{\varphi_{2}})\zeta_{2}\nabla \zeta_{2} \Big\}ds\Big|
+\Big|\sum_{K \in \mathscr{R}_{h}}\sum_{j=1}^{4}\int_{A_{j+1}CA_{j}} \nonumber \\
\tilde{\epsilon}_{uu}(u_{\varphi_{1}})\zeta_{1}^{2}-\tilde{\epsilon}_{uu}(u_{\varphi_{2}})\zeta_{2}^{2}\nabla u.\bold{n}\gamma v_{h}ds\Big|  \nonumber \\
+\Big|\sum_{K \in \mathscr{R}_{h}}\sum_{j=1}^{4}\int_{A_{j+1}CA_{j}}
\tilde{(\rho h_{d})}_{uu}(u_{\varphi_{1}})\zeta_{1}^{2}-\tilde{(\rho h_{d})}_{uu}(u_{\varphi_{2}})\zeta_{2}^{2}\vec{\beta}.\bold{n}\gamma v_{h}ds\Big|  \nonumber \\
+\Big|\sum_{e \in \Gamma}\int_{e}[\gamma v_{h}]\Big\{(\rho h_{d})_{uu}(u_{\varphi_{1}}) \zeta_{1}^{2}
-(\rho h_{d})_{uu}(u_{\varphi_{2}}) \zeta_{2}^{2} \Big\}ds\Big|. 
 \end{gather}
Now by using Taylor's formula we obtain
\begin{align}
 \epsilon_{u}(u_{\varphi_{1}})(u_{\varphi_{1}}-u)-\epsilon_{u}(u_{\varphi_{2}})(u_{\varphi_{2}}-u)
 =\epsilon(u_{\varphi_{1}})-\epsilon(u_{\varphi_{2}}) \nonumber \\
 =\tilde{R_{\epsilon_{1}}}(u_{\varphi_{1}},u_{\varphi_{2}})
 (u_{\varphi_{1}}-u_{\varphi_{2}}) 
\end{align}
and 
\begin{gather}
 \tilde{\epsilon}_{uu}(u_{\varphi_{1}})(u_{\varphi_{1}}-u)^{2}-\tilde{\epsilon}_{uu}(u_{\varphi_{2}})
 (u_{\varphi_{2}}-u)^{2}=\nonumber \\
 \tilde{R_{\epsilon_{2}}}(u_{\varphi_{1}},u_{\varphi_{2}})(u_{\varphi_{1}}-u_{\varphi_{2}})^{2}
 +\tilde{\epsilon}_{uu}(u_{\varphi_{2}})(u_{\varphi_{2}}-u)(u_{\varphi_{1}}-u_{\varphi_{2}}). 
\end{gather}
Now using (96) and (97) property and using similar argument of lemma 3.8 we can bound equation (95) as
\begin{gather*}
 \bar{\mathscr{B}}(u;\varphi_{1}-\varphi_{2},v_{h}) \le
 C_{\epsilon}\Big[\vertiii{\chi}^{2}
 +C_{u}\Big(h^{5/3}+h^{1/2}+h^{2/3}+h(1+h^{1/2})\Big) \nonumber \\
 \vertiii{\chi}\vertiii{u_{\varphi_{1}}-\Pi_{h}u}
 +C_{u}(h^{2}+h+h^{3/2})\vertiii{\chi}\vertiii{u_{\varphi_{2}}-\Pi_{h}u} \Big]\vertiii{v_{h}}+\nonumber \\
 C_{\rho h_{d}}\Big[\vertiii{\chi}^{2}+C_{u}(h^{5/3}+h^{3/2}) 
 \vertiii{\chi}\vertiii{u_{\varphi_{1}}-\Pi_{h}u} + C_{u}(h^{3/2}+h)\nonumber \\
 \vertiii{\chi}\vertiii{u_{\varphi_{2}}-\Pi_{h}u}\Big]\vertiii{v_{h}}
 \le CC_{u}h^{\delta_{0}}\vertiii{\chi}\vertiii{v_{h}}.
\end{gather*}
Now taking $v_{h}=\varphi_{1}-\varphi_{2}$ and using coercive property we have the desire result.
\end{proof}
\section{Error Estimates}\label{section:error}
In this section, we prove that under light load operating condition optimal order estimate in $H^{1}$ can be achieved in the 
defined norm $|\lVert . \rVert|$.
Let $u_{\mathscr{I}} \in \mathscr{V}_{h}$ be an interpolant of $u$, for which the following well known approximation property holds:
\begin{align}\label{eq:61}
 |u-u_{\mathscr{I}}|_{l,K} \le Ch^{2-l}|u|_{2,K} \quad  \forall K \in \mathscr{R}_{h}, \quad l=0,1,
\end{align}
where $C$ depends only on the angle $K$. The following theorem we will require to establish our justification.
\begin{theorem}
Suppose $u \in H^{2}(\Omega)\cap H_{0}^{1}(\Omega)$ and $u_{h} \in \mathscr{V}_{h}$ be the solution of (34). Then
there exists a constant $C$ without dependent of $h$ such that 
\begin{align}\label{eq:62}
 |\lVert u-u_{h} \rVert| \le Ch|u|_{2}
\end{align}
\end{theorem}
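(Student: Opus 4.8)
The plan is to combine a Céa-type quasi-optimality argument with the fixed-point machinery already assembled in Section~\ref{section:dgfvm}. The starting point is the triangle inequality $\vertiii{u-u_h} \le \vertiii{u-\Pi_h u} + \vertiii{\Pi_h u - u_h}$. The first piece is controlled by the interpolation estimate~\eqref{eq:61} (equivalently Lemma~3.1), which gives $\vertiii{u-\Pi_h u} \le Ch|u|_2$ after summing the local bounds over all $K \in \mathscr{R}_h$ and handling the jump-seminorm contribution $\sum_e |\gamma(u-\Pi_h u)|_e^2$ via the trace inequality stated in the excerpt. So the whole task reduces to bounding $\vertiii{\Pi_h u - u_h}$ by $Ch|u|_2$.

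For that term I would invoke Theorem~3.12 (the contraction estimate for $\mathcal S$) together with Theorem~3.11 (that $\mathcal S$ maps $\mathcal Q_\delta(\Pi_h u)$ into itself): by Browder's fixed point theorem $\mathcal S$ has a fixed point $u_h$ in $\mathcal Q_\delta(\Pi_h u)$, and since solving~\eqref{eq:49}/(35) is equivalent to the fixed point equation~(45), this $u_h$ is the discrete solution. Now apply Lemma~3.9 with $u_\varphi = u_h$, so that $\varphi = \mathcal S u_h = u_h$ and $\vartheta = \chi = \Pi_h u - u_h$. The estimate of Lemma~3.9 then reads, schematically,
\begin{align*}
 \vertiii{\chi} \le (C_\epsilon+C_{\rho h_d})\vertiii{\chi}^2 + C_u\,p(h)\,\vertiii{\chi} + C_u\,q(h)\vertiii{\eta} + C\vertiii{\eta},
\end{align*}
where $p(h) = O(h^{1/2})$ collects the positive powers $h^{5/3}, h^{1/2}, h^{2/3}, h, h^{3/2}$ and $q(h) = O(h)$, and $\vertiii{\eta} = \vertiii{u-\Pi_h u} \le Ch|u|_2$. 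For $h$ small enough both $C_u p(h)$ and $(C_\epsilon+C_{\rho h_d})\vertiii{\chi}$ (which is $\le (C_\epsilon+C_{\rho h_d})\delta$, and $\delta = h^{-\delta_0}\vertiii{\eta} = O(h^{3/4}|u|_2) \to 0$) can be absorbed into the left-hand side, leaving $\vertiii{\chi} \le C\vertiii{\eta} \le Ch|u|_2$. Combining with the interpolation bound yields $\vertiii{u-u_h} \le Ch|u|_2$, which is~\eqref{eq:62}.

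The main obstacle is the absorption step: one must verify that the quadratic term $\vertiii{\chi}^2$ and the linear term with coefficient $C_u p(h)$ are genuinely subordinate. This is exactly where the \emph{light load} hypothesis enters — the constants $C_\epsilon$ and $C_{\rho h_d}$ depend on bounds for $\epsilon$, $\rho h_d$ and their derivatives (via Lemma~3.3 and the coercivity Lemma~3.6, which requires $\alpha_1$ large and $h$ small), and only under a smallness condition on the load parameter are these constants small enough that $(C_\epsilon + C_{\rho h_d})\delta < \tfrac12$ and the coercivity constant $C$ in Lemma~3.6 stays bounded away from zero uniformly. I would make this threshold explicit: choose $\delta_0 \in (0,1/4]$ and $h_0$ so that for $h \le h_0$ the bracketed quantity in~(92) is $<1$ (which is already what Theorem~3.11 guarantees) and simultaneously $(C_\epsilon+C_{\rho h_d})h^{1-\delta_0}C'_uC_1 + C_u p(h) \le \tfrac12$; then the rearrangement is legitimate. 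Everything else is routine bookkeeping with the mesh-dependent norms $\vertiii{\cdot}$ and $\vertiii{\cdot}_\nu$, Hölder, trace and inverse inequalities already quoted.
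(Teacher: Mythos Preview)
The paper actually states Theorem~4.1 without supplying an explicit proof; it is presented immediately after the fixed-point machinery of Section~3.3 (Lemmas~3.8--3.9 and the two theorems that follow) as their consequence. Your proposal is precisely the intended completion: split $\vertiii{u-u_h}\le\vertiii{u-\Pi_hu}+\vertiii{\Pi_hu-u_h}$, bound the first piece by interpolation, and for the second apply Lemma~3.9 with $u_\varphi=u_h$ (so that $\varphi=\mathcal{S}u_h=u_h$ and $\chi=\vartheta$), then absorb the quadratic term $(C_\epsilon+C_{\rho h_d})\vertiii{\chi}^2$ and the $O(h^{1/2})$-coefficient linear term using the a~priori inclusion $\vertiii{\chi}\le\delta=h^{-\delta_0}\vertiii{\eta}=O(h^{1-\delta_0})$ already secured by the self-mapping theorem. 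The absorption arithmetic is correct and the light-load remark about the size of $C_\epsilon$, $C_{\rho h_d}$ matches the paper's standing hypothesis. One small bookkeeping correction: the two theorems you invoke are numbered 3.10 and 3.11 in the paper (lemmas and theorems share a counter), not 3.11 and 3.12.
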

\subsection{$L^{2}$-Error Estimates}
In this section, $L^{2}$-error estimate is evaluated for the light load parameter case by exploiting the
Aubin-Nitsche ``trick''.
\begin{theorem}
 Let $u \in H^{2}(\Omega) \cap H^{1}_{0}(\Omega) $ and $u_{h} \in \mathscr{V}_{h}$ be the solution of problem
 \ref{eq:1} and \ref{eq:49} respectively. Then there exists a positive constant $C$ independent of $h$ such that
 \begin{gather}\label{eq:66}
  \lVert u-u_{h} \rVert \le Ch^{2} \lVert u \rVert_{2}
 \end{gather}
\end{theorem}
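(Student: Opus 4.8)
The plan is to use the Aubin–Nitsche duality argument, which requires an auxiliary linearized adjoint problem. First I would set $e = u - u_h$ and consider the dual problem: find $w \in H^2(\Omega)\cap H^1_0(\Omega)$ solving the linearized elliptic equation
\begin{align*}
 -\nabla\cdot(\epsilon(u)\nabla w + \epsilon_u w \nabla u) + \nabla\big(\vec{\beta}(\rho_u h_d w)\big) = e \quad \text{in } \Omega, \qquad w = 0 \quad \text{on } \partial\Omega,
\end{align*}
which is well-posed with the elliptic regularity bound $\lVert w\rVert_{H^2(\Omega)} \le C\lVert e\rVert$ already quoted in the excerpt (equation after the linearization display). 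Testing this adjoint equation against $e$ expresses $\lVert e\rVert^2$ as $\bar{\mathscr{B}}(u; e, w)$ plus consistency terms coming from the DG-FVM nonconforming discretization (the difference between $\gamma v$ and $v$, the edge/jump terms, and the mismatch between the control-volume integrals $\int_{A_{j+1}CA_j}$ and the element integrals).

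Next I would split $w - \Pi_h w = \eta_w$ and use Galerkin orthogonality in the form given by equation (45): since solving the discrete problem is equivalent to the linearized identity $\bar{\mathscr{B}}(u;\Pi_h u - u_h, v_h) = \bar{\mathscr{B}}(u;\Pi_h u - u, v_h) + \mathscr{F}(u_h; u_h - u, v_h)$, I can subtract the interpolant $\Pi_h w$ to replace $w$ by $\eta_w$ in the principal term, picking up an extra power of $h$ from $|\eta_w|_{1,h} \le Ch|w|_2 \le Ch\lVert e\rVert$. The three sources of error to bound are then: (i) the approximation term $\bar{\mathscr{B}}(u;\eta,\eta_w)$ where $\eta = \Pi_h u - u$, bounded by $C\vertiii{\eta}_\nu \vertiii{\eta_w} \le C h|u|_2 \cdot h\lVert e\rVert$ using boundedness of the bilinear form and the interpolation estimate $\vertiii{\eta}_\nu \le Ch|u|_2$; (ii) the nonlinear remainder $\mathscr{F}$, which by Lemma 3.8 (with $\chi = \Pi_h u - u_h$, $v_h$ replaced by $\eta_w$) is controlled by $C_\epsilon[\vertiii{\chi}^2 + C_u h\,\vertiii{\chi} + C_u h\,\vertiii{\eta}]\vertiii{\eta_w}$, and since the $H^1$-estimate Theorem already gives $\vertiii{\chi} \le \vertiii{u-u_h} + \vertiii{\eta} \le Ch|u|_2$, every bracket is $O(h^2 |u|_2^2)$ against $\vertiii{\eta_w} \le Ch\lVert e\rVert$; (iii) the consistency/nonconformity terms, which by the trace and inverse inequalities together with the mapping property of $\gamma$ contribute another factor of $h$ beyond the $O(h)$ already present. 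Collecting, $\lVert e\rVert^2 \le C h^2 |u|_2 \cdot h \lVert e\rVert \cdot (\text{bounded factors})$, wait — more precisely each term is $\le Ch^2\lVert u\rVert_2 \lVert e\rVert$, so dividing by $\lVert e\rVert$ yields $\lVert u - u_h\rVert \le Ch^2\lVert u\rVert_2$.

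The main obstacle I expect is the careful bookkeeping of the nonconforming consistency terms: because this is a finite-volume Galerkin method, $\bar{\mathscr{B}}$ is not symmetric and the test function $\gamma v_h \neq v_h$, so the duality argument does not cleanly close unless one shows that the extra terms $\langle \nabla(\cdot), v_h - \gamma v_h\rangle$ and the edge integrals $\int_e [\gamma v_h - v_h]\{\cdots\}$ genuinely gain an $h$ when $v_h$ is taken to be (the interpolant of) the smooth dual solution $w$ rather than a generic discrete function. This requires invoking $|v - \gamma v|$-type estimates on each element and the $H^2$-regularity of $w$ simultaneously; getting the powers of $h$ to line up so that the final product is $h^2$ and not $h^{3/2}$ is the delicate point, and it is exactly where the \emph{light load} hypothesis enters — it keeps $C_\epsilon$, $C_{\rho h_d}$ and the Lipschitz constants of $\tilde\epsilon_u, \tilde\epsilon_{uu}, \widetilde{(\rho h_d)}_{uu}$ small enough that the quadratic-in-$\vertiii{\chi}$ contributions from $\mathscr{F}$ are absorbed rather than dominating.
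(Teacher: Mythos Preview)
Your strategy is the same Aubin--Nitsche duality argument the paper uses: set up the linearized adjoint problem with right-hand side $e$, express $\lVert e\rVert^2$ through the bilinear form tested against the dual solution, subtract an interpolant of the dual solution to gain one power of $h$, and control the nonlinear residual via Taylor expansion to obtain another $\vertiii{e}$-factor that the $H^1$-estimate converts into a second power of $h$. The paper organizes this slightly differently---it works with $\mathscr{B}$ rather than $\bar{\mathscr{B}}$ and decomposes $\mathscr{B}(u;e,\phi)$ into a part $I$ carrying $\phi-\vartheta$ (your $\eta_w$) and a part $II$ carrying $\phi$ itself, then applies Taylor's formula directly to the differences $\epsilon(u)-\epsilon(u_h)$ and $\rho(u)h_d-\rho(u_h)h_d$ to produce the $\tilde\epsilon_{uu}e^2$ and $\widetilde{(\rho h_d)}_{uu}e^2$ terms. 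Your route through equation~(45) and Lemma~3.8 packages the same estimates; either organization works.

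One bookkeeping point to fix: in your item (ii) the nonlinear remainder $\mathscr{F}$ is not tested against $\eta_w=w-\Pi_h w$ but against $\Pi_h w$ itself, because equation~(45) gives $\bar{\mathscr{B}}(u;e,v_h)=\mathscr{F}(u_h;u_h-u,v_h)$ for $v_h\in\mathscr{V}_h$. You therefore do \emph{not} gain an extra $h$ from $\vertiii{\eta_w}$ here; instead the two powers of $h$ both come from the bracket in Lemma~3.8 (since $\vertiii{\chi},\vertiii{\eta}\le Ch|u|_2$ by the $H^1$-theorem), and the test-function factor is $\vertiii{\Pi_h w}\le C\lVert w\rVert_{H^2}\le C\lVert e\rVert$. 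This matches the paper's bounds of the form $C\vertiii{e}^2\lVert\phi\rVert_{H^2(\Omega)}$ in equations~(116)--(123). With that correction your argument closes exactly as the paper's does.
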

\begin{proof}
Consider $\phi \in H^{2}(\Omega)$ and for fix value of $u$ and $h_{d} \in H^{2}(\Omega)$
we write the adjoint problem of (1.1) as 
\begin{align}
 -\nabla\Big( \epsilon(u)\nabla \phi + \phi \epsilon_{u}\nabla u\Big)+\vec{\beta}\Big(\rho h_{d}+(\rho h_{d})_{u}\Big)\nabla \phi=e \quad \text{in } \Omega\\
 \phi=0  \quad \text{on } \partial \Omega.
 \end{align}
also we have
\begin{gather}
 \lVert e \rVert^{2}=\mathscr{B}(u;e,\phi)+
 \sum_{K \in \mathscr{R}_{h}}\sum_{j=1}^{4}\int_{A_{j+1}CA_{j}}\epsilon_{u}e\nabla.\bold{n}\gamma \phi ds
 +\sum_{e \in \Gamma}\int_{e}[\gamma \phi]\Big\{ \epsilon_{u}e \nabla u\Big\}ds \nonumber \\
 -\sum_{K \in \mathscr{R}_{h}}\sum_{j=1}^{4}\int_{A_{j+1}CA_{j}}(\rho h_{d})_{u}e\vec{\beta}.\bold{n}\gamma \phi ds-
 \sum_{e \in \Gamma}\int_{e}[\gamma \phi]\Big\{ (\rho h_{d})_{u}e \Big\}ds
\end{gather}
First term of equation (103) is rewritten as
\[\mathscr{B}(u;e,\phi)=\mathscr{B}(u;u,\phi)-\mathscr{B}(u_{h};u_{h},\phi)+\mathscr{B}(u_{h};u_{h},\phi)-\mathscr{B}(u;u_{h},\phi)\]
\[=\underbrace{\mathscr{B}(u;u,\phi-\vartheta)-\mathscr{B}(u_{h};u_{h},\phi-\vartheta)}_{I}
+\underbrace{\mathscr{B}(u_{h};u_{h},\phi)-\mathscr{B}(u;u_{h},\phi)}_{II},\]
where $\vartheta=\mathcal{I}_{h}^{k}\phi$ such that $\vartheta|_{\partial \Omega}=0$
(Here $\mathcal{I}_{h}^{k}u \in \mathscr{V}_{h} \cap H^{2}(\Omega) \cap C^{0}(\Omega)$).
We notice that \[I=\mathscr{B}(u;u,\phi-\vartheta)-\mathscr{B}(u_{h};u,\phi-\vartheta)
+\mathscr{B}(u_{h};u,\phi-\vartheta)-\mathscr{B}(u_{h};u_{h},\phi-\vartheta)\]
\begin{gather}
=\sum_{K\in \mathscr{R}_{h}}\sum_{j=1}^{4}\int_{A_{j+1}CA_{j}}(\epsilon(u)
-\epsilon(u_{h}))\nabla u.\bold{n}\gamma(\phi-\vartheta)ds
+\sum_{e \in \Gamma}\int_{e}[\gamma (\phi-\vartheta)] \nonumber \\
\Big\{(\epsilon(u)-\epsilon(u_{h}))\nabla u.\bold{n} \Big\}ds
-\sum_{K\in \mathscr{R}_{h}}\sum_{j=1}^{4}\int_{A_{j+1}CA_{j}}(\rho(u)h_{d}(x)
-\rho(u_{h})h_{d}(x))\vec{\beta}.\bold{n}\gamma(\phi-\vartheta)ds \nonumber \\
-\sum_{e \in \Gamma}\int_{e}[\gamma (\phi-\vartheta)]\Big\{(\rho(u)h_{d}(x)-\rho(u_{h})h_{d}(x))\vec{\beta}.\bold{n} \Big\}ds+ \nonumber \\
\sum_{K\in \mathscr{R}_{h}}\sum_{j=1}^{4}\int_{A_{j+1}CA_{j}}
\epsilon(u_{h})\nabla (u-u_{h}).\bold{n}\gamma(\phi-\vartheta)ds
+\sum_{e \in \Gamma}\int_{e}[\gamma (\phi-\vartheta)]\Big\{\epsilon(u_{h})\nabla (u-u_{h})\Big\}ds \nonumber \\
=J_{I_{1}}+J_{I_{2}}+J_{I_{3}}+J_{I_{4}}+J_{I_{5}}+J_{I_{6}}.
\end{gather}
First term, $J_{I_{1}}$ of equation (104) is approximated as 
\begin{gather}
|J_{I_{1}}| \le 
{\Big| \sum_{K}\langle \epsilon(u)-\epsilon(u_{h}) \nabla u, \nabla (\phi-\vartheta) \rangle \Big|}
+\Big| \sum_{K}\int_{\partial K}[\gamma(\phi-\vartheta)-(\phi-\vartheta)]\nonumber \\
(\epsilon(u)-\epsilon(u_{h}))\nabla u.\bold{n} ds \Big| 
+{\Big|\sum_{K}\langle \nabla (\epsilon(u)-\epsilon(u_{h}))\nabla u,
(\phi-\vartheta)-\gamma(\phi-\vartheta) \rangle\Big|}\nonumber \\
={J_{01}}+{J_{02}}+{J_{03}}.
\end{gather}
We bound first term, $J_{01}$ of equation (105) as
\begin{align}
\sum_{K}\Big|\int_{K}\epsilon(u)-\epsilon(u_{h}) \nabla u.\nabla (\phi-\vartheta)dx\Big|
\le C_{u}C_{\epsilon}\vertiii{e} \lVert \phi-\vartheta \rVert.\end{align}
Second term, $J_{02}$ of equation (105) is approximated bounded above as
\begin{gather}
J_{02} \le C_{u}C_{\epsilon} \sum_{K}\Big(h^{-1}\lVert \gamma(\phi-\vartheta)-(\phi-\vartheta) \rVert^{2}_{K}
+h\lVert \gamma(\phi-\vartheta)-(\phi-\vartheta) \rVert^{2}_{1,K} \Big)^{1/2}\times \lVert e \rVert \nonumber \\
\le C_{u}C_{\epsilon}\lVert \phi-\vartheta \rVert_{H^{1}(\Omega)}\vertiii{e}.
\end{gather}
Similarly, third term $J_{03}$ of equation (105) is estimated as 
\begin{align}
J_{03} \le C_{\epsilon}C_{u} \vertiii{e} \lVert \phi-\vartheta \rVert+C_{\epsilon}C_{u}\lVert \phi-\vartheta \rVert_{H^{1}(\Omega)}\vertiii{e}.
\end{align}
Using Holder's inequality and trace inequality we estimate second term, $J_{I_{2}}$ of equation (104) as 
\begin{gather}
J_{I_{2}} \le C_{\epsilon}\sum_{e \in \Gamma}\Big(\int_{e} [\gamma (\phi-\vartheta)]^{2}ds\Big)^{1/2}
\Big(\int_{e} |e|^{4}ds\Big)^{1/4} \Big(\int_{e} |\nabla u|^{4}ds\Big)^{1/4} \nonumber \\
\le C_{\epsilon}\sum_{e \in \Gamma}\Big(\int_{e}h^{-1} [\gamma (\phi-\vartheta)]^{2}ds\Big)^{1/2}
\Big(\lVert e \rVert^{4}_{L^{4}(K)}+h\lVert e \rVert^{3}_{L^{6}(K)}\lVert \nabla e \rVert_{L^{2}(K)}\Big)^{1/4} \nonumber \\
\times \Big(\lVert \nabla u \rVert^{4}_{L^{4}(K)}+h\lVert \nabla u \rVert^{3}_{L^{6}(K)}
\lVert \nabla .\nabla u \rVert_{L^{2}(K)}\Big)^{1/4} \nonumber \\
\le C_{u}C_{\epsilon}\vertiii{e}^{2}\vertiii{\phi-\vartheta}.
\end{gather}
By using Similar argument we bound the following terms as
\begin{align}
|J_{I_{3}}| \le C_{u} \vertiii{e} \vertiii{\phi-\vartheta}, \\
|J_{I_{4}}| \le C_{u} \vertiii{e} \vertiii{\phi-\vartheta}, \\
|J_{I_{5}}| \le C_{u} \vertiii{e} \vertiii{\phi-\vartheta}, \\
|J_{I_{6}}| \le C_{u} \vertiii{e} \vertiii{\phi-\vartheta}.
\end{align}
We note that 
\begin{gather}
II=\sum_{K \in \mathscr{R}_{h}}\sum_{j=1}^{4}\int_{A_{j+1}CA_{j}}(\epsilon(u_{h})-\epsilon(u))\nabla u_{h}.\bold{n}\gamma \phi ds
+\sum_{e \in \Gamma}\int_{e}[\gamma \phi]\Big\{(\epsilon(u_{h}) \nonumber \\
-\epsilon(u))\nabla u_{h}\Big\}ds
-\sum_{K\in \mathscr{R}_{h}}\sum_{j=1}^{4}\int_{A_{j+1}CA_{j}}
(\rho(u_{h})h_{d}(x)-\rho(u)h_{d}(x))\vec{\beta}.\bold{n}\gamma \phi ds \nonumber \\
-\sum_{e \in \Gamma}\int_{e}[\gamma \phi]\Big\{\rho(u_{h})h_{d}(x)-\rho(u)h_{d}(x))\vec{\beta}.\bold{n} \Big\}ds 
=\sum_{K \in \mathscr{R}_{h}}\sum_{j=1}^{4}\int_{A_{j+1}CA_{j}}(\epsilon(u_{h}) \nonumber \\
-\epsilon(u))\nabla (u_{h}-u).\bold{n}\gamma \phi ds
+\sum_{e \in \Gamma}\int_{e}[\gamma \phi]\Big\{(\epsilon(u_{h})-\epsilon(u))\nabla (u_{h}-u)\Big\}ds \nonumber \\
+\sum_{K \in \mathscr{R}_{h}}\sum_{j=1}^{4}\int_{A_{j+1}CA_{j}}(\epsilon(u_{h})-\epsilon(u))\nabla u.\bold{n}\gamma \phi ds
+\sum_{e \in \Gamma}\int_{e}[\gamma \phi]\Big\{(\epsilon(u_{h})-\epsilon(u))\nabla u\Big\}ds \nonumber \\
-\sum_{K\in \mathscr{R}_{h}}\sum_{j=1}^{4}\int_{A_{j+1}CA_{j}}
(\rho(u_{h})h_{d}(x)-\rho(u)h_{d}(x))\vec{\beta}.\bold{n}\gamma \phi ds \nonumber \\
-\sum_{e \in \Gamma}\int_{e}[\gamma \phi]\Big\{\rho(u_{h})h_{d}(x)-\rho(u)h_{d}(x))\vec{\beta}.\bold{n} \Big\}ds \nonumber \\
=J_{II_{1}}+J_{II_{2}}+J_{II_{3}}+J_{II_{4}}+J_{II_{5}} +J_{II_{6}} 
\end{gather}
First term $J_{II_{1}}$ of equation (114) is approximated as 
\begin{gather}
J_{II_{1}} \le \Big|\sum_{K} \langle \epsilon(u_{h})-\epsilon(u) \nabla (u_{h}-u), \nabla \phi \rangle \Big|
+\Big|\sum_{e \in \Gamma}\int_{\partial K}[\gamma \phi- \phi]\Big\{ (\epsilon(u_{h}) \nonumber \\
-\epsilon (u))\nabla (u_{h}-u) \Big\} ds\Big|
+\Big|\sum_{K} \langle \nabla (\epsilon(u_{h})-\epsilon(u)) \nabla (u_{h}-u), \phi -\gamma \phi \rangle \Big| \nonumber \\
=J^{1}_{II_{1}}+J^{2}_{II_{1}}+J^{3}_{II_{1}}.
\end{gather}
First term $J^{1}_{II_{1}}$ of equation (115) is estimated by using holder's inequality
\begin{gather}
 J^{1}_{II_{1}} \le C_{u} \Big(\sum_{K}\int_{K} |e|^{3} dx \Big)^{1/3} \Big(\sum_{K}\int_{K} |\nabla e|^{2} dx \Big)^{1/2}
\Big(\sum_{K}\int_{K} |\nabla \phi|^{6} dx \Big)^{1/6} \nonumber \\
 \le C_{u}C\vertiii{e}^{2}\lVert \phi \rVert_{H^{2}(\Omega)}
\end{gather}
Using trace inequality second term $J^{2}_{II_{1}}$ of equation (115) is estimated as 
\begin{gather}
J^{2}_{II_{1}} \le C_{u}\Big(\int_{\partial K}[\gamma \phi-\phi]^{2}ds\Big)^{1/2}
\Big(\int_{\partial K}|e|^{4}ds\Big)^{1/4}
\Big(\int_{\partial K}|\nabla e|^{4}ds\Big)^{1/4} \nonumber \\
 \le C_{u}C\vertiii{e}^{2}\lVert \phi \rVert_{H^{2}(\Omega)}
\end{gather}
Third term, $J^{3}_{II_{1}}$ of equation (115) is bounded using Holder's and trace inequality as 
\begin{gather}
J^{3}_{II_{1}} \le C_{u} \Big(\sum_{K}\int_{K} |e|^{3} dx \Big)^{1/3} \Big(\sum_{K}\int_{K} |\nabla e|^{2} dx \Big)^{1/2}
\Big(\sum_{K}\int_{K} |\nabla (\phi-\gamma \phi)|^{6} dx \Big)^{1/6}\nonumber \\
+C_{u}\Big(\int_{\partial K}|\gamma \phi-\phi|^{2}ds\Big)^{1/2}
\Big(\int_{\partial K}|e|^{4}ds\Big)^{1/4}
\Big(\int_{\partial K}|\nabla e|^{4}ds\Big)^{1/4}\nonumber \\
\le  C_{u}C\vertiii{e}^{2}\lVert \phi \rVert_{H^{2}(\Omega)}
\end{gather}
We bound the second term $J_{II_{2}}$ of equation (114) by using trace as well as Holder's inequality to obtain
\begin{gather}
J_{II_{2}} \le C_{u}C\vertiii{e}^{2}\lVert \phi \rVert_{H^{2}(\Omega)}.
\end{gather}
Now consider the third term of equation (114) and take second term of equation (103) and using Taylor's formula get 
\begin{gather}
\Big|\sum_{K \in \mathscr{R}_{h}}\sum_{j=1}^{4}\int_{A_{j+1}CA_{j}}
\tilde{\epsilon}_{uu}(u_{h})e^{2}\nabla u.\bold{n}\gamma \phi ds\Big| \le C_{u}C_{\epsilon}\vertiii{e}^{2}
\Vert \phi\Vert_{H^{2}(\Omega)}.
\end{gather}
Take fourth term of equation (114) and third term of equation (103) and use Taylor's formula to obtain
\begin{gather}
\Big|\sum_{e \in \Gamma}\int_{e}
[\gamma \phi]\Big\{\tilde{\epsilon}_{uu}(u_{h})e^{2}\nabla u \Big\} ds\Big| \le C_{u}C_{\epsilon}\vertiii{e}^{2}
\Vert \phi\Vert_{H^{2}(\Omega)}.
\end{gather}
We take fifth term of equation (114) and fourth term of equation (103) and use Taylor's formula to get
\begin{gather}
\Big|\sum_{K\in \mathscr{R}_{h}}\sum_{j=1}^{4}\int_{A_{j+1}CA_{j}}
\tilde{\rho h_{d}}_{uu}e^{2}\vec{\beta}.\bold{n}\gamma \phi ds \Big| \le
C_{u}C_{\rho h_{d}}\vertiii{e}^{2}
\Vert \phi\Vert_{H^{2}(\Omega)}
\end{gather}
Finally taking sixth term of equation (114) and fifth term of equation (103) and by using taylor's formula we get bound as
\begin{gather}
\Big|\sum_{e \in \Gamma}\int_{e}
[\gamma \phi]\Big\{\tilde{\rho h_{d}}_{uu}(u_{h})e^{2}\Big\} ds\Big| \le C_{\rho h_{d}}\vertiii{e}^{2}
\Vert \phi\Vert_{H^{2}(\Omega)}.
\end{gather}
\end{proof}
\section{Numerical test of Discontinuous Galerkin finite volume method}\label{section:ntest}
In this section, numerical experiments are performed for EHL point contact cases. Optimal error estimates 
for pressure $(u-u_{h})$ are achieved in broken $H^{1}$ norm $|\lVert . \rVert|$ and $L^{2}$ norm which
are plotted in Fig.~\ref{fig:err} with the red line and the blue line respectively.
Numerical results confirm the theoretical order of convergence derived in Theorem
4.1 and Theorem 4.3 which are almost equal to 1 and 2 respectively.
We have also shown graphical figures of pressure $u$ Fig.~\ref{fig:lp} and Fig.~\ref{fig:mp}
and film thickness $H$ Fig.~\ref{fig:h1} under light load condition by writing in Moe’s non-dimensional parameter
form detail can be found in \cite{moes}.
\begin{figure}[ht!]
\centering
\includegraphics[width=3.5in, height=3.5in, angle=0]{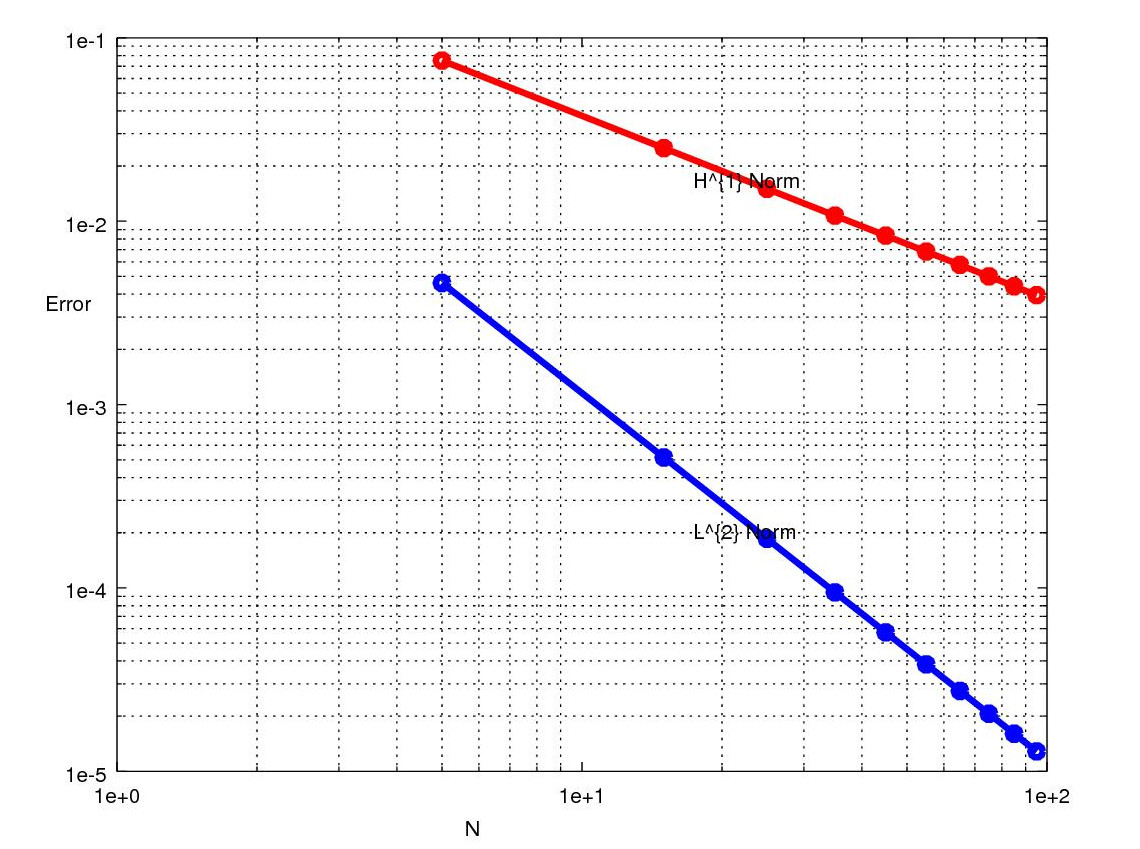}
\caption{$L^{2}$ (in blue line) and $H^{1}$ (in red line) error $\lVert u-u_{h}\rVert$ plot}
\label{fig:err}
\end{figure}
\begin{figure}[ht!]
\centering
\includegraphics[width=3.5in, height=1.5in, angle=0]{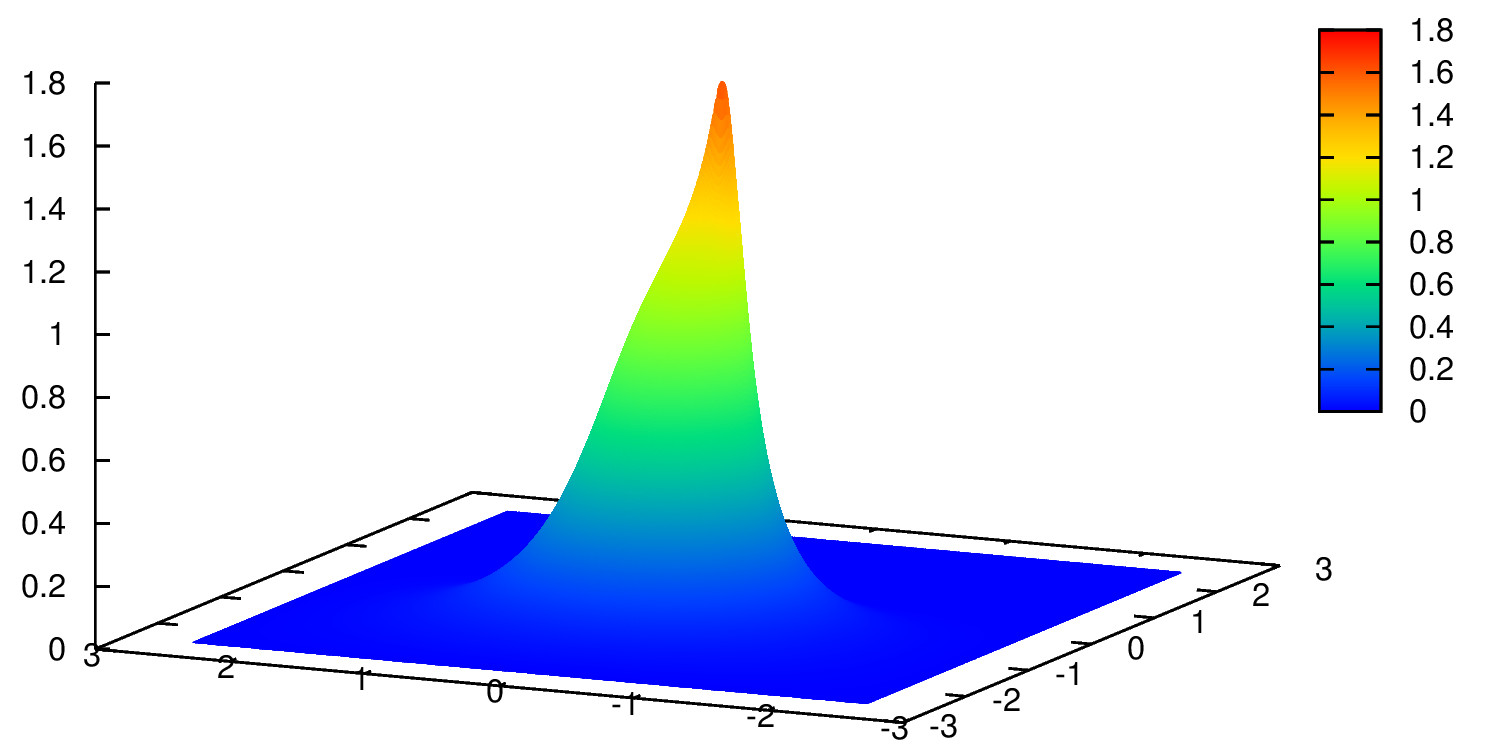}
\caption{Pressure profile for light load case $M=7$ and $L=10$ (Moe's parameter)}
\label{fig:lp}
\end{figure}
\begin{figure}[ht!]
\centering
\includegraphics[width=3.5in, height=1.5in, angle=0]{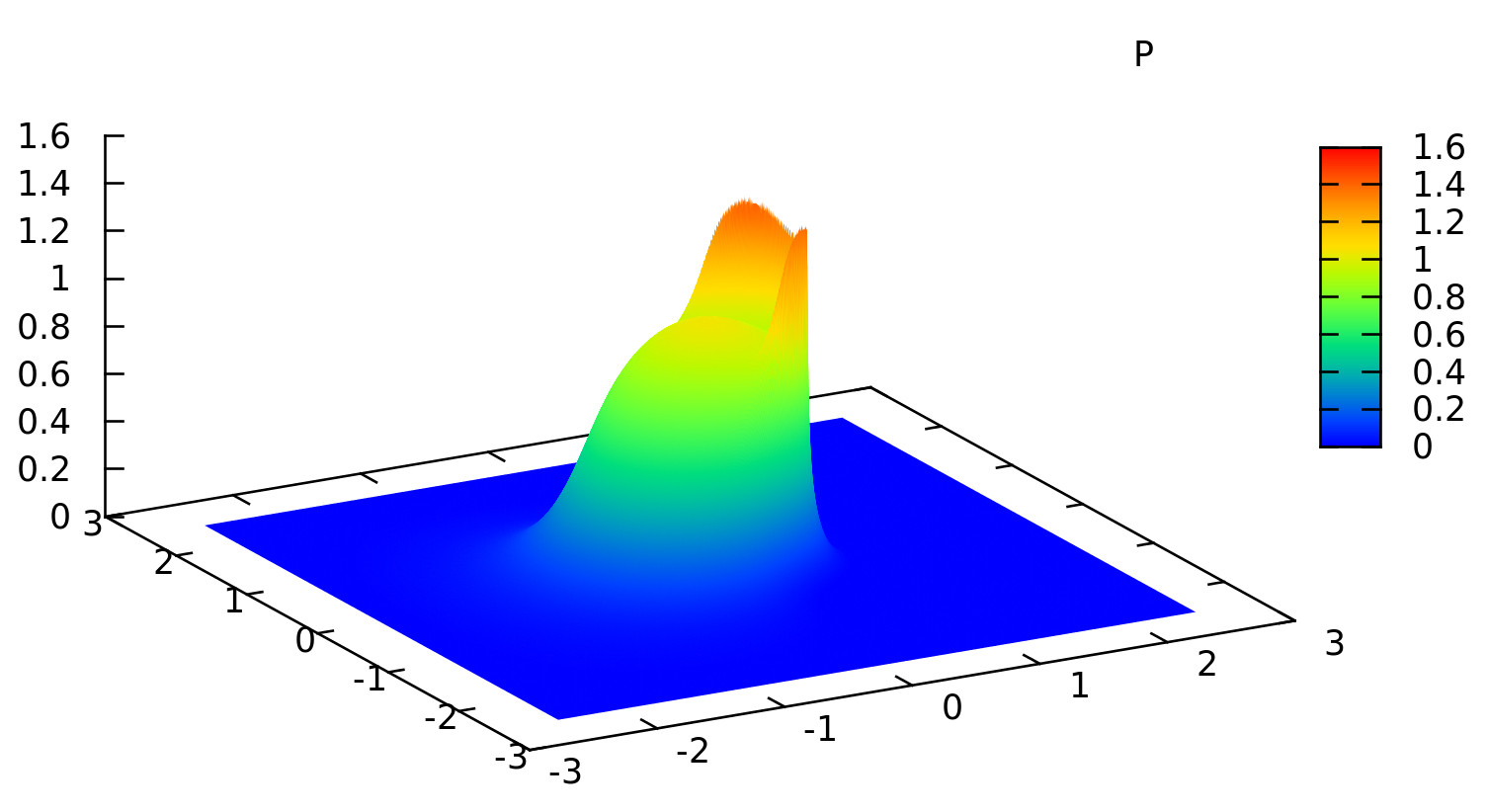}
\caption{Pressure profile for moderately high load case $M=20$ and $L=10$}
\label{fig:mp}
\end{figure}
\begin{figure}[ht!]
\centering
\includegraphics[width=3.5in, height=1.5in, angle=0]{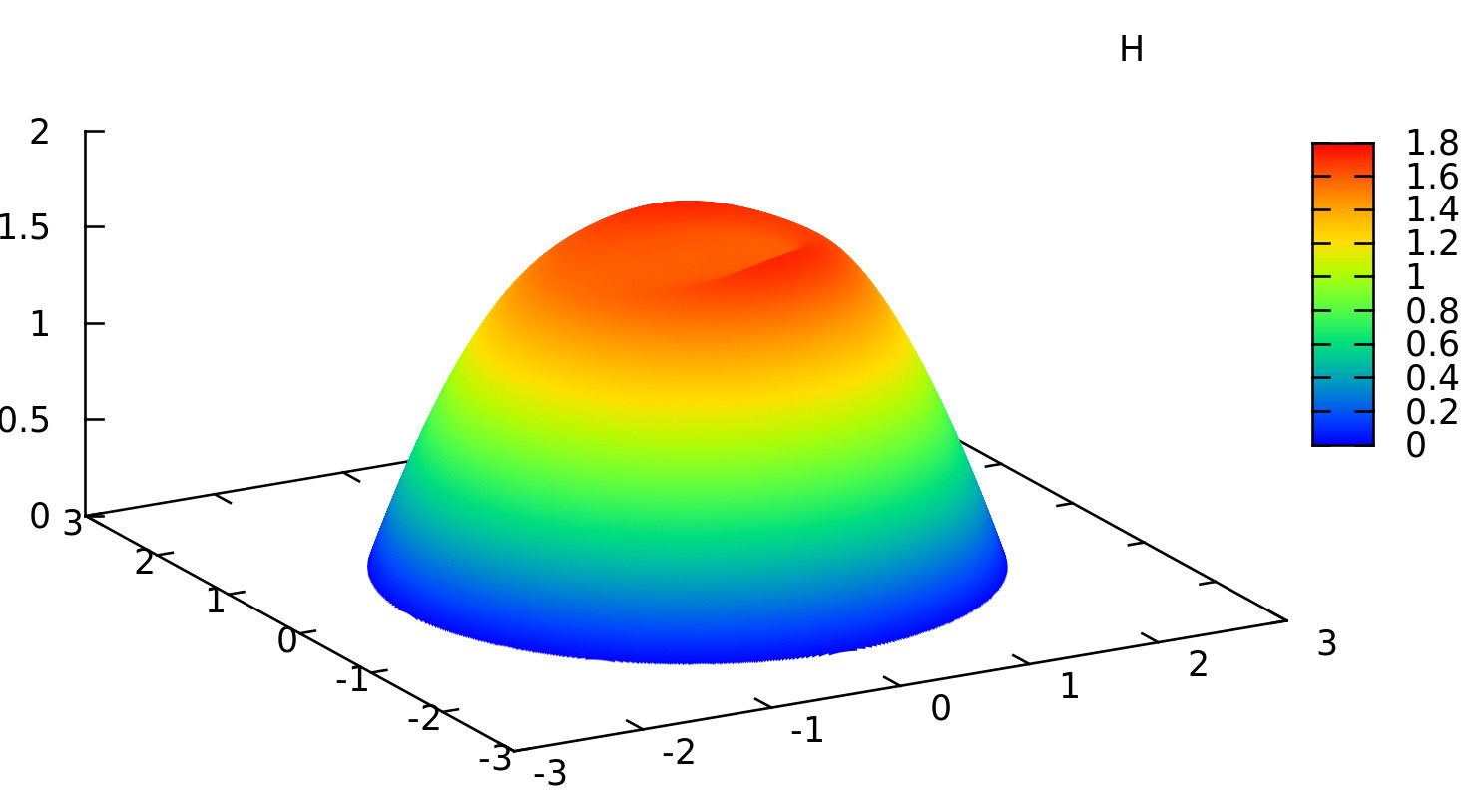}
\caption{Film thickness profile in inverted form for $M=20$ and $L=10$}
\label{fig:h1}
\end{figure}
\subsection{Film thickness calculation}
Accurate film thickness $H$ computation is very important for stable relaxation procedure and require extra care in its computation.
Film thickness calculation is calculated as follows
\small
\begin{align}\label{eq:74}
 {h_{d}(x,y)} ={h_{0}+\frac{{x}^2+{y}^2}{2}+ \frac{2}{\pi^2}\int_{x_{-}}^{x_{+}}\int_{y_{-}}^{y_{+}}\frac{p(x',y')dx'dy'}{\sqrt{(x-x')^2+(y-y')^2}}}
 \end{align}
 \normalsize
\begin{align}\label{eq:75}
 =h_{0}+\frac{{x}^2+{y}^2}{2}+ \frac{2}{\pi^2}\sum^{N}_{e=1} \int_{e}\frac{\sum^{p_e+1}_{i=0}a^{e}_{i}\mathscr{N}^{e}_{i}(x',y')}{\sqrt{(x-y')^2+(y-y')^2} } dx'dy'
\end{align}
\begin{align}\label{eq:76}
 =h_{0}+\frac{{x}^2+y^2}{2}+\frac{2}{\pi^2}\sum^{N}_{e=1} \sum^{p_e+1}_{i=0} \int_{e}\frac{a^{e}_{i} \mathscr{N}^{e}_{i}(x',y')  dx'dy' }{\sqrt{(x-y')^2+(y-y')^2}} 
\end{align}
\begin{align}\label{eq:77}
=h_{0}+\frac{{x}^2+y^2}{2}+ \frac{2}{\pi^2}\sum^{N}_{e=1} \sum^{p_e+1}_{i=0}\mathscr{G}^{e}_{i}(x,y)a^{e}_{i}
\end{align}
\subsection{Mild singular integral computation}
Singularity at $(x',y') = (x, y)$ can be approximated in the following manner. We first rewrite kernel  $\mathscr{G}^e_i(x)$ in the following form
\begin{align}\label{eq:78}
\mathscr{G}^{e}_{i}(x) = \int_{\Omega_{e}}\frac{\mathscr{N}^{e}_{i}(x',y')dx'dy'}{\sqrt{(x-y')^2+(y-y')^2}} \nonumber \\
 = \frac{h_{x}^{e}}{2}\frac{h_{y}^{e}}{2} \int_{-1}^{1}\int_{-1}^{1}\frac{\mathscr{N}^{e}_{i}(x'(\xi,\chi ),y'(\xi,\chi ))
 d\xi d\chi }{\sqrt{(x-x'(\xi,\chi ))^2+(y-y'(\xi,\chi ))^2}}\nonumber\\
 \approx\frac{h_{x}^{e}}{2}\frac{h_{y}^{e}}{2}\sum_{j=1}^{m}\sum_{k=1}^{m}
 \frac{\mathscr{N}^{e}_{i}(x'(\xi_{j},\chi _{k}),y'(\xi_{j},\chi _{k})) w_{j} w_{k}}{\sqrt{(x-x'(\xi_{j},\chi _{k}))^2+(y-y'(\xi_{j},\chi _{k}))^2}},
\end{align}
where $h_{x}^{e}=x_{2}-x_{1}$ and $h_{y}^{e}=y_{2}-y_{1}$ are the step sizes of element $e$ in the $x$ direction and $y$ direction respectively 
and $\xi \in [-1,1]$ and $\chi  \in [-1,1]$ are the coordinate directions for the reference element. We have applied here $m$ point quadrature in $x$ and $y$ direction of discretization.
Singular quadrature procedure is implemented here to resolve the singularity appeared in term $\mathscr{G}_{i}^{e}(x,y)=\frac{1}{\sqrt{(x-x')^{2}+(y-y')^{2}}}$ at the point $(x,y)$.
Idea involve by dividing the element $e$ into four subpart elements $\mathscr{F}_{k}, k=1,2,3,4$ for calculating integrals of $\mathscr{G}_{i}^{{\mathscr{F}_{k}}}(x,y)=\frac{1}{\sqrt{(x-x')^{2}+(y-y')^{2}}}$.
Each four integrals have chosen in a such way that they have only one singular point in the domain of integration.
Four integrals defined above can be evaluated as in general integral form:
\begin{align}\label{eq:79}
 \mathscr{S}^{*} =\int_{0}^{1}\int_{0}^{1} \mathscr{F}^{*}(x,y)\mathscr{G}^{*}(x,y)dxdy,
\end{align}
where $\mathscr{F}^{*}$ is analytic function and $\mathscr{G}^{*}$ is a function having a mild singularity at only one point.
\begin{align}\label{eq:80}
 \mathscr{S}\approx \mathscr{S}_{n}^{*} = \sum_{i=1}^{n} \mathscr{I}_i,
\end{align}
where
\begin{align}\label{eq:81}
\mathscr{I}_i = \int_{x_i}^{x_{i-1}} \int_{y_i}^{y_{i-1}}\mathscr{F}^{*}(x,y)\mathscr{G}^{*}(x,y)dxdy , (i \geq 1).
\end{align}
Where $ (x_0,y_{0}) = (1,1) $ and  $(x_{n},y_{n}) \rightarrow (0,0) \quad \text{as} \quad  n \rightarrow \infty$ for the value 
$(x_n,y_{n}) = (\theta^{n},\theta^{n}) , (0 < \theta < 1)$.
\subsection{Load balance equation calculation}
The force balance equation is discretized according to:
\begin{align}\label{eq:82}
\sum^{N}_{e=1}\int_{\Omega_e} \sum^{p_e+1}_{i=0}{\mathscr{G}_{1}}^{e}_{i} (x,y)a^{e}_{i} dx dy-\frac{2\pi}{3}=0
\end{align}
By introducing another kernel $\mathscr{N_{1}}^{e}_{i}$ \
\begin{align}\label{eq:83}
{\mathscr{G}_{1}}^{e}_{i} = \int_{\Omega_e} \mathscr{N_{1}}^{e}_{i}(x,y)dx dy
\end{align}
the discrete force balance equation can be rewritten as:
\begin{align}\label{eq:84}
\sum^{N}_{e=1} \sum^{p_e+1}_{i=0} (\mathscr{N_{1}}^{e}_{i})a^{e}_{i}-\frac{2\pi}{3}=0
\end{align}
\section{Conclusion}\label{section:con}
New discontinuous Galerkin finite volume method is developed and analyzed
with the help of interior-exterior penalty approach. The method is fully systematic
and easily parallelized in MPI (Massage passing interface) environment.
Stability estimates are proved by showing operator as pseudo-monotone for moderate
load condition. Optimal error estimates are achieved under light load condition
theoretically as well as by numerical computation in $H^{1}$ and $L^2$ norm respectively.
More implementation issues and applications will be discussed in the second part of the paper.
\begin{appendices}
\section{Relaxation of EHL}\label{appendix:relax}
For finding unique solution we can update our nonlinear operator iterative manner by taking old and new pressure value in the following form
\begin{align}\label{eq:87}
U_{\text{new}} = U_{\text{old}}+\Big(\frac{\partial \mathscr{T}_{d}(U)}{\partial U}\Big)^{-1}\mathcal{R}_{s},                                                           
\end{align}
where $\mathcal{R}_{s}$ is the numerical residual value of the discretized Reynolds equation and, $\mathscr{T}_{d}$
is discretized nonlinear operator. The approximation of $\frac{\partial \mathscr{T}_{d}(U)}{\partial U}$ can be evaluated in the following way,
\[ \frac{\partial \mathscr{T}_{d}(U)}{\partial U}\approx \frac{\partial \mathscr{T}^{*}_{d}(U)}{\partial U}-\frac{\partial \mathscr{T}^{**}_{d}(U)}{\partial U} \]
\begin{align}\label{eq:88}
\approx \mathscr{A}^{*}_{d}(U)-\frac{\partial \mathscr{T}^{**}_{d}(U)}{\partial U}
\end{align}
In the above equation \ref{eq:88}, we can notice that term $\frac{\partial \mathscr{T}^{**}_{d}(U)}{\partial U}$ is a full dense matrix and evaluated in the following way,
\small
\[ \frac{\partial \mathscr{T}^{**}_{d}(U)}{\partial U}\Big|_{I,J} =\sum\limits_{K \in \mathscr{R}_h} \sum\limits_{j=1}^{3} \int_{A_{j+1}CA_{j}}(\rho \frac{\partial h_{d}}{\partial U_{j}^{f}}+h_{d}\frac{\partial \rho}{\partial U_{j}^{f}}).(\beta .\bold{n})\gamma v ds\]
\[ + \sum\limits_{e \in \Gamma} \int_{e} [\gamma v] \Big\{ (\rho \frac{\partial h_{d}}{\partial U_{j}^{f}}+h_{d}\frac{\partial \rho}{\partial U_{j}^{f}}).(\beta.\bold{n})\Big\} ds, \]
where the $I^{\text{th}}$ subscript denote the row generated with the test function $v=\mathscr{N}^{e}_{i}(\bold{X})$ and the $J^{\text{th}}$ subscript correspond to the unknown $U_{j}^{{\mathscr{F}}}$.
According to the equation (60) we can evaluate the following expression
\[ \frac{\partial h_{d}}{\partial U_{j}^{f}}=\mathscr{G}_{j}^{f}     \]
\normalsize
which can be pre-evaluated. It is worth mentioning that, from equation (60) the film thickness depends heavily on the local pressure and very less on the pressure for away. The value of $\mathscr{G}_{j}^{{\mathscr{F}}}$
is rapidly decreases as the position of element $\mathscr{F}$ is far away from the position of $\bold{X}=(x,y)$. From the above information we can reduce our computation cost by considering the following approximations of 
$\frac{\partial \mathscr{T}^{**}_{d}(U)}{\partial U} $:
\begin{itemize}
 \item $\frac{\partial h_{d}(\bold{X})}{\partial U_{j}^{f}}=0$ where $\bold{X}\in e$ if $f\neq e$ and $f$ is not a adjacent element of $e$.
 \item $\frac{\partial h_{d}(\bold{X})}{\partial U_{j}^{f}}=0$ where $\bold{X}\in \Gamma_{\text{int}}$ and if  $f$ is not a adjacent element of $\Gamma_{\text{int}}$.
 \item $\frac{\partial h_{d}(\bold{X})}{\partial U_{j}^{f}}=0$ where $\bold{X}\in \Gamma_{D}$ and if  $f$ is not a adjacent element of $\Gamma_{D}$.
 \item $\frac{\partial h_{d}(\bold{X})}{\partial U_{j}^{f}}=\mathscr{G}_{j}^{f}(\bold{X})$, otherwise.
\end{itemize}
\section{Parameters used in computation}\label{appendix:pvalue}
Following Parameters relation is defined in our study for $\epsilon^{*},\rho(u), \eta(u),\lambda.$
\begin{align}\label{eq:89}
\rho(u) = \rho_{0}\frac{l_{1} +1.34 u}{l_{1}+u}\nonumber\\
\eta(u) = \eta_{0}e^{l_{2}u}\nonumber\\
 \epsilon^{*} = \frac{\rho h^{3}_{d}}{\eta \lambda}\nonumber\\
 \lambda = \frac{12\mu v(2R)^{3}}{\pi E}
\end{align}
where $l_{1}=0.59\times 10^{9}$ and $l_{2}\approx 2.0\times 10^{-8}$.
\end{appendices}
\section*{Acknowledgment}
This work is fully funded by DST-SERB
Project reference no.PDF/2017/000202 under N-PDF fellowship program 
and working group at the Tata Institute of Fundamental Research, TIFR-CAM, Bangalore. 
The authors also would like to thank to Department of Mathematics \& Statistics Indian Institute of Technology Kanpur for 
their lodging support during writing this article.
\bibliographystyle{plain}
\bibliography{ref}
\end{document}